\documentclass[11pt,english]{article}
\usepackage[T1]{fontenc}
\usepackage[latin9]{inputenc}
\usepackage{geometry}
\geometry{verbose,tmargin=2.6cm,bmargin=2.7cm,lmargin=3cm,rmargin=3cm}
\usepackage{float}
\usepackage{units}
\usepackage{amsthm}
\usepackage{amsmath}
\usepackage{amssymb}
\usepackage{graphicx}
\usepackage[all]{xy}

\makeatletter
\numberwithin{equation}{section}
\theoremstyle{plain}
\newtheorem{thm}{\protect\theoremname}[section]
  \theoremstyle{definition}
  \newtheorem{defn}[thm]{\protect\definitionname}
  \theoremstyle{plain}
  \newtheorem{cor}[thm]{\protect\corollaryname}
  \theoremstyle{remark}
  \newtheorem*{acknowledgement*}{\protect\acknowledgementname}
  \theoremstyle{definition}
  \newtheorem*{example*}{\protect\examplename}
  \theoremstyle{plain}
  \newtheorem{prop}[thm]{\protect\propositionname}
  \theoremstyle{remark}
  \newtheorem*{rem*}{\protect\remarkname}
  \theoremstyle{remark}
  \newtheorem{claim}[thm]{\protect\claimname}
  \theoremstyle{plain}
  \newtheorem{lem}[thm]{\protect\lemmaname}
  \theoremstyle{plain}
  \newtheorem*{thm*}{\protect\theoremname}
  \theoremstyle{remark}
  \newtheorem{rem}[thm]{\protect\remarkname}
\newenvironment{lyxlist}[1]
{\begin{list}{}
{\settowidth{\labelwidth}{#1}
 \setlength{\leftmargin}{\labelwidth}
 \addtolength{\leftmargin}{\labelsep}
 }}
{\end{list}}

\usepackage{txfonts}
\usepackage{ifsym}
\usepackage{graphicx}
\usepackage{url}
\usepackage[perpage,symbol]{footmisc}

\DefineFNsymbols*{lamportnostar}[math]{\dagger\ddagger\S\P\|{\dagger\dagger}{\ddagger\ddagger}}
\setfnsymbol{lamportnostar}

\DeclareMathOperator{\im}{im}
\DeclareMathOperator{\sgn}{sgn}
\DeclareMathOperator{\Spec}{Spec}
\DeclareMathOperator{\lk}{lk}
\DeclareMathOperator{\overlap}{overlap}
\DeclareMathOperator{\nul}{null}
\DeclareMathOperator{\rank}{rank}

\DeclareMathOperator{\tr}{trace}
\sloppy
\theoremstyle{remark}
\newtheorem*{rems*}{Remarks}

\theoremstyle{plain}
\newtheorem{claim}[thm]{\protect\claimname}

\theoremstyle{plain}

\renewcommand{\qed}{\hfill \mbox{\raggedright \rule{0.1in}{0.1in}}}

\makeatother

\usepackage{babel}
  \providecommand{\acknowledgementname}{Acknowledgement}
  \providecommand{\claimname}{Claim}
  \providecommand{\corollaryname}{Corollary}
  \providecommand{\definitionname}{Definition}
  \providecommand{\examplename}{Example}
  \providecommand{\lemmaname}{Lemma}
  \providecommand{\propositionname}{Proposition}
  \providecommand{\remarkname}{Remark}
  \providecommand{\theoremname}{Theorem}
\providecommand{\theoremname}{Theorem}

\begin{document}

\title{Isoperimetric Inequalities in Simplicial Complexes}

\author{Ori Parzanchevski, Ron Rosenthal and Ran J.\ Tessler}
\maketitle
\begin{abstract}
In graph theory there are intimate connections between the expansion
properties of a graph and the spectrum of its Laplacian. In this paper
we define a notion of combinatorial expansion for simplicial complexes
of general dimension, and prove that similar connections exist between
the combinatorial expansion of a complex, and the spectrum of the
high dimensional Laplacian defined by Eckmann. In particular, we present
a Cheeger-type inequality, and a high-dimensional Expander Mixing
Lemma. As a corollary, using the work of Pach, we obtain a connection
between spectral properties of complexes and Gromov's notion of geometric
overlap. Using the work of Gunder and Wagner, we give an estimate
for the combinatorial expansion and geometric overlap of random Linial-Meshulam
complexes.
\end{abstract}

\section{Introduction}

It is a cornerstone of graph theory that the expansion properties
of a graph are intimately linked to the spectrum of its Laplacian.
In particular, the discrete Cheeger inequalities \cite{Tan84,Dod84,AM85,Alo86}
relate the spectral gap of a graph to its Cheeger constant, and the
Expander Mixing Lemma \cite{friedman1987expanding,AC88,beigel1993fault}
relates the extremal values of the spectrum to discrepancy in the
graph (see \eqref{eq:mixing-graphs}) and to its mixing properties.

In this paper we define a notion of expansion for simplicial complexes,
which generalizes the Cheeger constant and the discrepancy in graphs.
We then study its relations to the spectrum of the high dimensional
Laplacian defined by Eckmann \cite{Eck44}, and present a high dimensional
Cheeger inequality and a high dimensional Expander Mixing Lemma.

\bigskip{}

This study is closely related to the notion of \emph{high dimensional
expanders}. A family of graphs $\left\{ G_{i}\right\} $ with uniformly
bounded degrees is said to be a family of \emph{expanders} if their
Cheeger constants $h\left(G_{i}\right)$ are uniformly bounded away
from zero. By the discrete Cheeger inequalities \eqref{eq:graph_cheeger_ineq},
this is equivalent to having their spectral gaps $\lambda\left(G_{i}\right)$
uniformly bounded away from zero. Thus, combinatorial expanders and
spectral expanders are equivalent notions. We refer to \cite{HLW06,Lub12}
for the general background on expanders and their applications.

It is desirable to have a similar situation in higher dimensions,
but at least as of now, it is not clear what is the ``right'' notion
of ``high dimensional expander''. One generalization of the Cheeger
constant to higher dimensions is the notion of \emph{coboundary expansion},
originating in \cite{LM06,Gro10}, and studied under various names
in \cite{meshulam2009homological,dotterrer2010coboundary,MW11,GW12,mukherjee2012cheeger,NR12}.
While in dimension one it coincides with the Cheeger constant, its
combinatorial meaning is somewhat vague in higher dimensions. Furthermore,
it is shown in \cite{GW12} that there exist, in any dimension greater
than one, complexes with spectral gaps bounded away from zero%
\footnote{\ The spectral gap of a complex is defined in Section \ref{sub:gap-def}.%
} and arbitrarily small coboundary expansion; In \cite{mukherjee2012cheeger}
the other direction is settled: there exist coboundary expanding complexes
with arbitrarily small spectral gaps.

Another notion of expansion is Gromov's \emph{geometric overlap property},
originating in \cite{Gro10} and studied in \cite{FGL+11,MW11}. This
notion was shown in \cite{Gro10,MW11} to be related to coboundary
expansion. However, even in dimension one it is not equivalent to
that of expander graphs.

Our definition of expansion suggests a natural notion of ``combinatorial
expanders'', and we show that spectral expanders with complete skeletons
are combinatorial expanders. A theorem of Pach \cite{Pac98} shows
that this notion of combinatorial expansion is also connected to the
geometric overlap property. As an application of our main theorems
we analyze the Linial-Meshulam model of random complexes, and show
that for suitable parameters they form combinatorial and geometric
expanders.

\subsection{Combinatorial expansion and the spectral gap}

The \emph{Cheeger constant} of a finite graph $G=\left(V,E\right)$
on $n$ vertices is usually taken to be 
\[
\varphi\left(G\right)=\min_{{A\subseteq V\atop 0<\left|A\right|\leq\frac{n}{2}}}\frac{\left|E\left(A,V\backslash A\right)\right|}{\left|A\right|}
\]
where $E\left(A,B\right)$ is the set of edges with one vertex in
$A$ and the other in $B$. In this paper, however, we work with the
following version:
\begin{equation}
h\left(G\right)=\min_{0<\left|A\right|<n}\frac{n\left|E\left(A,V\backslash A\right)\right|}{\left|A\right|\left|V\backslash A\right|}.\label{eq:cheeger-graph}
\end{equation}
Since $\varphi\left(G\right)\leq h\left(G\right)\leq2\varphi\left(G\right)$,
defining expanders by $\varphi$ or by $h$ is equivalent. 

The \emph{spectral gap} of $G$, denoted $\lambda\left(G\right)$,
is the second smallest eigenvalue of the \emph{Laplacian} $\Delta^{+}:\mathbb{R}^{V}\rightarrow\mathbb{R}^{V}$,
which is defined by 
\begin{equation}
\left(\Delta^{+}f\right)\left(v\right)=\deg\left(v\right)f\left(v\right)-\sum_{w\sim v}f\left(w\right).\label{eq:graph-laplacian}
\end{equation}
The discrete Cheeger inequalities \cite{Tan84,Dod84,AM85,Alo86} relate
the Cheeger constant and the spectral gap: 
\begin{equation}
\frac{h^{2}\left(G\right)}{8k}\leq\lambda\left(G\right)\leq h\left(G\right),\label{eq:graph_cheeger_ineq}
\end{equation}
where $k$ is the maximal degree of a vertex in $G$.%
\footnote{\ For $\varphi$ they are given by $\frac{\varphi^{2}\left(G\right)}{2k}\leq\lambda\left(G\right)\leq2\varphi\left(G\right).$%
} In particular, the bound $\lambda\leq h$ shows that spectral expanders
are combinatorial expanders. This proved to be of immense importance
since the spectral gap is approachable by many mathematical tools
(coming from linear algebra, spectral methods, representation theory
and even number theory - see e.g.\ \cite{Lub10,Lub12} and the references
within). In contrast, the Cheeger constant is usually hard to analyze
directly, and even to compute it for a given graph is NP-hard \cite{blum1981complexity,matula1990sparsest}.

\bigskip{}

Moving on to higher dimension, let $X$ be an (abstract) simplicial
complex with vertex set $V$. This means that $X$ is a collection
of subsets of $V$, called \emph{cells} (and also \emph{simplexes},\emph{
faces},\emph{ }or \emph{hyperedges}), which is closed under taking
subsets, i.e., if $\sigma\in X$ and $\tau\subseteq\sigma$, then
$\tau\in X$. The \emph{dimension }of a cell $\sigma$ is $\dim\sigma=\left|\sigma\right|-1$,
and $X^{j}$ denotes the set of cells of dimension $j$. The dimension
of $X$ is the maximal dimension of a cell in it. The \emph{degree}
of a $j$-cell (a cell of dimension $j$) is the number of $\left(j+1\right)$-cells
which contain it. Throughout this paper we denote by $d$ the dimension
of the complex at hand, and by $n$ the number of vertices in it.
We shall occasionally add the assumption that the complex has a \emph{complete
skeleton}, by which we mean that every possible $j$-cell with $j<d$
belongs to $X$. 

We define the following generalization of the Cheeger constant:
\begin{defn}
For a finite $d$-complex $X$ with $n$ vertices $V$, 
\[
h\left(X\right)=\min\limits _{V=\coprod_{i=0}^{d}A_{i}}\frac{n\cdot\left|F\left(A_{0},A_{1},\ldots,A_{d}\right)\right|}{\left|A_{0}\right|\cdot\left|A_{1}\right|\cdot\ldots\cdot\left|A_{d}\right|},
\]
where the minimum is taken over all partitions of $V$ into nonempty
sets $A_{0},\ldots,A_{{d}}$, and $F\left(A_{0},\ldots,A_{{d}}\right)$
denotes the set of ${d}$-dimensional cells with one vertex in each
$A_{i}$.
\end{defn}
For $d=1$, this coincides with the Cheeger constant of a graph \eqref{eq:cheeger-graph}.
To formulate an analogue of the Cheeger inequalities, we need a high-dimensional
analogue of the spectral gap. Such an analogue is provided by the
work of Eckmann on discrete Hodge theory \cite{Eck44}. In order to
give the definition we shall need more terminology, and we defer this
to Section \ref{sub:gap-def}%
\footnote{\ The spectral gap appears in Definition \ref{def:The-spectral-gap},
and is given alternative characterizations in Propositions \ref{prop:gap-alternat}
and \ref{prop:spectral-gap-complete}.%
}. The basic idea, however, is the same as for graphs, namely, the
spectral gap $\lambda\left(X\right)$ is the smallest nontrivial eigenvalue
of a suitable Laplace operator. The following theorem, whose proof
appears in Section 4.1, generalizes the upper Cheeger inequality to
higher dimensions:
\begin{thm}[Cheeger Inequality]
\label{thm:Isoperimetric_inequality}For a finite complex $X$ with
a complete skeleton, $\lambda\left(X\right)\leq h\left(X\right)$.$ $
\end{thm}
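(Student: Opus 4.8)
The plan is to imitate, cell by cell, the classical proof of the upper bound $\lambda(G)\le h(G)$ for graphs. By the variational description of the spectral gap for complexes with complete skeleton (Proposition~\ref{prop:spectral-gap-complete}; see also Proposition~\ref{prop:gap-alternat}),
\[
\lambda(X)=\min\left\{\frac{\left\|\delta\phi\right\|^{2}}{\left\|\phi\right\|^{2}}\ :\ 0\neq\phi\in C^{d-1}(X),\ \phi\perp\im\delta_{d-2}\right\},
\]
where $C^{j}(X)$ is the space of real alternating $j$-cochains, $\delta\colon C^{d-1}(X)\to C^{d}(X)$ is the coboundary, and $\delta_{d-2}\colon C^{d-2}(X)\to C^{d-1}(X)$. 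So it suffices to attach to every ordered partition $V=\coprod_{i=0}^{d}A_{i}$ into nonempty blocks a test cochain $f=f_{A_{0},\dots,A_{d}}\in C^{d-1}(X)$ with $f\perp\im\delta_{d-2}$ and
\[
\frac{\left\|\delta f\right\|^{2}}{\left\|f\right\|^{2}}=\frac{n\cdot\left|F(A_{0},\dots,A_{d})\right|}{\left|A_{0}\right|\cdot\left|A_{1}\right|\cdots\left|A_{d}\right|};
\]
minimizing over partitions then yields $\lambda(X)\le h(X)$.

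First I would construct $f$. Call a $j$-cell \emph{rainbow} if its $j+1$ vertices lie in $j+1$ distinct blocks; as there are $d+1$ blocks, a rainbow $(d-1)$-cell misses exactly one block. For an ordered $(d-1)$-cell $\sigma=[v_{0},\dots,v_{d-1}]$ with $v_{m}\in A_{i_{m}}$, set $f(\sigma)=0$ unless $i_{0},\dots,i_{d-1}$ are pairwise distinct, and in that case, with $k$ the missing index, put
\[
f(\sigma)=\sgn\begin{pmatrix}0 & 1 & \cdots & d-1 & d\\ i_{0} & i_{1} & \cdots & i_{d-1} & k\end{pmatrix}\cdot\left|A_{k}\right|,
\]
the sign of the displayed permutation of $\{0,\dots,d\}$. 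Transposing two vertices of $\sigma$ transposes two columns, so $f$ is alternating and is a genuine cochain; for $d=1$ this is exactly $\left|A_{1}\right|\mathbf{1}_{A_{0}}-\left|A_{0}\right|\mathbf{1}_{A_{1}}$. (Equivalently, up to an orientation convention, $f=\sum_{k=0}^{d}\pm\left|A_{k}\right|\,g_{k}$, where $g_{k}\in C^{d-1}(X)$ is the $\{0,\pm1\}$-valued alternating cochain supported on the rainbow $(d-1)$-cells missing block $k$.)

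Then I would carry out three computations. \emph{(i) Support and modulus of $\delta f$.} On a rainbow $d$-cell $\tau$ the alternating sum defining $\delta f(\tau)$ telescopes: deleting the vertex in block $k$ gives a rainbow face missing $k$, and the orientation signs in the alternating sum and in $f$ conspire so that all $d+1$ terms add with the same sign, giving $\delta f(\tau)=\pm\sum_{k=0}^{d}\left|A_{k}\right|=\pm n$. If $\tau$ has two vertices in a common block, every face of $\tau$ except the two obtained by deleting one of those two vertices is non-rainbow (so $f$ vanishes on it), and skew-symmetry forces the remaining two contributions to cancel; if $\tau$ has vertices in at most $d-1$ blocks, all faces are non-rainbow. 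Hence $\delta f$ is supported on $F(A_{0},\dots,A_{d})$ with $\left|\delta f\right|\equiv n$ there, so $\left\|\delta f\right\|^{2}=n^{2}\left|F(A_{0},\dots,A_{d})\right|$. \emph{(ii) Norm of $f$.} Since $X$ has complete skeleton, all rainbow $(d-1)$-cells are present; there are $\prod_{i\neq k}\left|A_{i}\right|$ missing block $k$, each carrying value $\pm\left|A_{k}\right|$, whence
\[
\left\|f\right\|^{2}=\sum_{k=0}^{d}\left|A_{k}\right|^{2}\prod_{i\neq k}\left|A_{i}\right|=\Bigl(\sum_{k=0}^{d}\left|A_{k}\right|\Bigr)\prod_{i=0}^{d}\left|A_{i}\right|=n\prod_{i=0}^{d}\left|A_{i}\right|.
\]
\emph{(iii) Orthogonality to coboundaries,} i.e.\ $\delta_{d-2}^{*}f=0$ for $\delta_{d-2}^{*}$ the adjoint of $\delta_{d-2}$. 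For a $(d-2)$-cell $\rho$, $(\delta_{d-2}^{*}f)(\rho)$ is a signed sum of $f(\rho\cup\{v\})$ over vertices $v$ (all of which give cells of $X$, by completeness); a term vanishes unless the blocks of $\rho$ are distinct and miss two indices $k_{1},k_{2}$ with $v\in A_{k_{1}}\cup A_{k_{2}}$. One checks that the orientation sign coming from where $v$ is inserted into $\rho$ exactly cancels a sign inside $f(\rho\cup\{v\})$, so the whole block $A_{k_{1}}$ contributes $(-1)^{d-1}\varepsilon\left|A_{k_{1}}\right|\left|A_{k_{2}}\right|$ and the block $A_{k_{2}}$ contributes its negative, where $\varepsilon=\pm1$ depends only on $\rho$; hence $(\delta_{d-2}^{*}f)(\rho)=0$. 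Combining (i)--(iii) with the variational principle gives Rayleigh quotient $n\left|F(A_{0},\dots,A_{d})\right|/\prod_{i}\left|A_{i}\right|$, and minimizing over partitions finishes the proof.

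The analytically empty but genuinely fiddly step is (iii), together with the sign part of (i): the definition of $f$ must be pinned down so precisely that \emph{both} the weights $\left|A_{k}\right|$ \emph{and} the permutation signs are right, since it is exactly this choice that makes $\delta_{d-2}^{*}f$ vanish block-by-block and makes the $d+1$ boundary contributions on a rainbow $d$-cell reinforce (to $\pm n$) rather than partially cancel. Everything else is the classical variational argument or a short count of cells.
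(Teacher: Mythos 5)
Your proposal is correct and follows essentially the same route as the paper: the test form you construct (sign of the block-permutation times the cardinality of the missing block) is exactly the paper's $f$ from \eqref{eq:cheeger_to_lambda_f}, and your steps (i)--(iii) are precisely the paper's computations of $\left\langle \partial_{d}^{*}f,\partial_{d}^{*}f\right\rangle =n^{2}\left|F\left(A_{0},\ldots,A_{d}\right)\right|$, of $\left\langle f,f\right\rangle =n\prod_{i}\left|A_{i}\right|$, and of $f\in Z_{d-1}$, followed by the same Rayleigh-quotient argument. The only differences are notational (cochain/coboundary language in place of the paper's forms and $\partial_{d}^{*}$, $\partial_{d-1}$).
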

\begin{rems*}$ $
\begin{enumerate}
\item If the skeleton of $X$ is not complete, then $h\left(X\right)=0$,
since there exist some $\left\{ v_{0},\ldots,v_{d-1}\right\} \notin X^{d-1}$,
and then $F\left(\left\{ v_{0}\right\} ,\left\{ v_{1}\right\} ,\ldots,\left\{ v_{d-1}\right\} ,V\backslash\left\{ v_{0},\ldots,v_{d-1}\right\} \right)=0$.
This suggests that a different definition of $h$ is called for, and
we propose one in Section \ref{sec:Open-Questions}.
\item For a discussion of a possible lower Cheeger inequality, see Section
\ref{sub:Towards-a-lower}.
\end{enumerate}
\end{rems*} \bigskip{}

In \cite{LM06} Linial and Meshulam introduced the following model
for random simplicial complexes: for a given $p=p\left(n\right)\in\left(0,1\right)$,
$X\left(d,n,p\right)$ is a $d$-dimensional simplicial complex on
$n$ vertices, with a complete skeleton, and with every $d$-cell
being included independently with probability $p$. Using the analysis
of the spectrum of $X\left(d,n,p\right)$ in \cite{GW12}, we show
the following:
\begin{cor}
\label{cor:Random_cor}The Linial-Meshulam complexes satisfy the following:
\begin{enumerate}
\item For large enough $C$, a.a.s.\ $h\left(X\left(d,n,\frac{C\log n}{n}\right)\right)\geq\left(C-O\left(\!\sqrt{C}\right)\right)\log n$.
\item For $C<1$, a.a.s.\ $h\left(X\left(d,n,\frac{C\log n}{n}\right)\right)=0$.
\end{enumerate}
\end{cor}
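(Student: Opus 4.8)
The plan is to deduce both parts from Theorem \ref{thm:Isoperimetric_inequality} together with the spectral analysis of $X\left(d,n,p\right)$ carried out in \cite{GW12}. For part (1), the Linial--Meshulam complex has a complete skeleton by construction, so the Cheeger inequality $\lambda\left(X\right)\leq h\left(X\right)$ applies; hence it suffices to show that a.a.s.\ $\lambda\left(X\left(d,n,\tfrac{C\log n}{n}\right)\right)\geq\left(C-O\left(\!\sqrt{C}\right)\right)\log n$. First I would recall from \cite{GW12} their concentration estimate for the spectrum of the Laplacian of $X\left(d,n,p\right)$: with $p=\tfrac{C\log n}{n}$, all the nontrivial eigenvalues lie in an interval of the form $np\pm O\!\left(\sqrt{np\log n}\right)$ (or the analogous bound in their normalization), so that $\lambda\geq np-O\!\left(\sqrt{np\log n}\right)=C\log n-O\!\left(\sqrt{C}\log n\right)$. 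The only real work here is bookkeeping: matching the normalization of the Laplacian used in Section \ref{sub:gap-def} with that of \cite{GW12}, and checking that their error term, when $np=C\log n$, is indeed $O\!\left(\sqrt{C}\right)\log n$ rather than something larger. Plugging this lower bound on $\lambda$ into Theorem \ref{thm:Isoperimetric_inequality} gives part (1).

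For part (2), I would argue directly from the definition of $h$, bypassing the spectrum entirely. When $C<1$, a theorem of Linial--Meshulam type (for $d=1$) and its higher-dimensional analogue states that $X\left(d,n,\tfrac{C\log n}{n}\right)$ a.a.s.\ has a ``missing cofacet'': there is some $\left(d-1\right)$-cell $\tau=\left\{v_0,\ldots,v_{d-1}\right\}$ that is contained in no $d$-cell of $X$ (this is the higher-dimensional coupon-collector / isolated-face phenomenon, occurring precisely in the regime $p<\tfrac{\log n}{n}$). Given such a $\tau$, take the partition $A_i=\left\{v_i\right\}$ for $i=0,\ldots,d-1$ and $A_d=V\setminus\left\{v_0,\ldots,v_{d-1}\right\}$; then $F\left(A_0,\ldots,A_d\right)$ consists exactly of the $d$-cells containing $\tau$, of which there are none, so this partition contributes $0$ to the minimum and $h\left(X\right)=0$.

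I expect the main obstacle to be entirely in part (1): correctly transcribing the spectral concentration result of \cite{GW12} into the normalization of this paper and verifying that the error is of the claimed order $O\!\left(\sqrt{C}\right)\log n$ uniformly in the relevant range of $C$ — in particular confirming that the additive $\sqrt{\cdot}$ error from the random-matrix estimate does not degrade to a multiplicative loss once $p$ is as small as $\tfrac{C\log n}{n}$ with $C$ large but fixed. Part (2) should be routine once the existence of a missing $\left(d-1\right)$-cofacet below $p=\tfrac{\log n}{n}$ is cited; I would locate the precise statement either in \cite{LM06} for $d=1$ or in the subsequent literature for general $d$, and note that Remark 1 after Theorem \ref{thm:Isoperimetric_inequality} already records exactly the partition argument needed.
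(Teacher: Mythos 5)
Your proposal is correct and follows essentially the same route as the paper: part (1) is obtained by combining Theorem \ref{thm:Isoperimetric_inequality} with the spectral concentration of $\Delta^{+}\big|_{Z_{d-1}}$ around $np=C\log n$ with error $O(\sqrt{C})\log n$ (the paper packages this as Lemma \ref{lem:spec_bound}, splitting off the degree-deviation term via a Chernoff bound before invoking \cite{GW12}), and part (2) is exactly the paper's argument of exhibiting a.a.s.\ an isolated $(d-1)$-cell (which the paper finds as an isolated vertex in the Erd\H{o}s--R\'enyi link of a $(d-2)$-cell) and using the corresponding singleton partition.
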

The proof appears in Section \ref{sub:Expansion-in-random}, as part
of Corollary \ref{cor:Random_unified}.

\subsection{Mixing and discrepancy }

The Cheeger inequalities \eqref{eq:cheeger-graph} bound the expansion
along the partitions of a graph, in terms of its spectral gap. However,
the spectral gap alone does not suffice to determine the expansion
between arbitrary sets of vertices. For example, the bipartite Ramanujan
graphs constructed in \cite{LPS88} are regular graphs with very large
spectral gaps, which are bipartite. This means that they contain disjoint
sets $A,B\subseteq V$ of size $\frac{n}{4}$, with $E\left(A,B\right)=\varnothing$.
It turns out that control of the expansion between any two sets of
vertices is possible by observing not only the smallest nontrivial
eigenvalue of the Laplacian, but also the largest one%
\footnote{\ Graphs having both of them bounded are referred to as ``two-sided
expanders'' in \cite{Tao11}.%
}. In particular, the so-called Expander Mixing Lemma (\cite{friedman1987expanding,AC88,beigel1993fault},
see also \cite{HLW06}) states that for a $k$-regular graph $G=\left(V,E\right)$,
and $A,B\subseteq V$,
\begin{equation}
\left|\left|E\left(A,B\right)\right|-\frac{k\left|A\right|\left|B\right|}{n}\right|\leq\rho\cdot\sqrt{\left|A\right|\left|B\right|},\label{eq:mixing-graphs}
\end{equation}
where $\rho$ is the maximal absolute value of a nontrivial eigenvalue
of $kI-\Delta^{+}$. 

The deviation of $\left|E\left(A,B\right)\right|$ from its expected
value $p\left|A\right|\left|B\right|$, where $p=\frac{k}{n}\approx\nicefrac{\left|E\right|}{{n \choose 2}}$
is the edge density, is called the \emph{discrepancy} of $A$ and
$B$. This is a measure of quasi-randomness in a graph, a notion closely
related to expansion (see e.g.\ \cite{Chu97}). In a similar fashion,
if $k$ is the average degree of a $\left(d-1\right)$-cell in $X$,
we call the deviation 
\[
\left|\left|F\left(A_{0},\ldots,A_{d}\right)\right|-\frac{\left|X^{d}\right|}{{n \choose d+1}}\cdot\left|A_{0}\right|\cdot\ldots\cdot\left|A_{d}\right|\right|\approx\left|\left|F\left(A_{0},\ldots,A_{d}\right)\right|-\frac{k\left|A_{0}\right|\cdot\ldots\cdot\left|A_{d}\right|}{n}\right|
\]
the discrepancy of $A_{0},\ldots,A_{d}$ (the question of using $\frac{\left|X^{d}\right|}{{n \choose d+1}}$
or $\frac{k}{n}$ is addressed in Remark \ref{rem:discrepancy}).
The following theorem generalizes the Expander Mixing Lemma to higher
dimensions:
\begin{thm}[Mixing Lemma]
\label{thm:mixing}If $X$ is a $d$-dimensional complex with a complete
skeleton, then for any disjoint sets of vertices $A_{0},\ldots,A_{d}$
one has 
\[
\left|\left|F\left(A_{0},\ldots,A_{d}\right)\right|-\frac{k\cdot\left|A_{0}\right|\cdot\ldots\cdot\left|A_{d}\right|}{n}\right|\leq\rho\cdot\left(\left|A_{0}\right|\cdot\ldots\cdot\left|A_{d}\right|\right)^{\frac{d}{d+1}},
\]
where $k$ is the average degree of a $\left(d-1\right)$-cell in
$X$, and $\rho$ is the maximal absolute value of a nontrivial eigenvalue
of $kI-\Delta^{+}$.
\end{thm}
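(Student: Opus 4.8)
The plan is to imitate the proof of the classical Expander Mixing Lemma \eqref{eq:mixing-graphs}: realise $\left|F(A_{0},\dots,A_{d})\right|$ as an inner product $\langle u,(kI-\Delta^{+})v\rangle$ of two $(d-1)$-cochains built from $\mathbf{1}_{A_{0}},\dots,\mathbf{1}_{A_{d}}$, peel off the contribution of the ``trivial'' part of the spectrum to produce the main term $\tfrac{k}{n}\prod_{i}|A_{i}|$, and estimate what is left by $\rho$ times the norms of $u$ and $v$. The point that has no one-dimensional analogue is the choice of cochains: there are $d+1$ sets but a $(d-1)$-cochain can only ``remember'' $d$ of them, so the natural objects will be indexed by which set is omitted.

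Concretely, since the $(d-1)$-skeleton of $X$ is complete we may identify $C^{d-1}(X)$ with $\bigwedge^{d}\mathbb{R}^{V}$ so that the coboundary $\delta\colon C^{d-2}\to C^{d-1}$ is $\eta\mapsto \mathbf{1}\wedge\eta$, its adjoint is contraction $\iota_{\mathbf{1}}$, and $\delta\colon C^{d-1}\to C^{d}$ is $\eta\mapsto(\mathbf{1}\wedge\eta)|_{X^{d}}$. Put $\psi_{i}=\mathbf{1}_{A_{i}}-\tfrac{|A_{i}|}{n}\mathbf{1}$ and, for $0\le j\le d$,
\[
\beta_{j}=(-1)^{j}\!\!\bigwedge_{i\neq j}\psi_{i}\ \in\ C^{d-1}(X),
\]
the wedge taken in increasing order of $i$. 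Since each $\psi_{i}\perp\mathbf{1}$ we get $\iota_{\mathbf{1}}\beta_{j}=0$, i.e.\ $\beta_{j}$ is orthogonal to the exact coboundaries $\delta C^{d-2}$; the operator $kI-\Delta^{+}$ preserves $(\delta C^{d-2})^{\perp}$ and, by the description of $\rho$ for complete skeletons (cf.\ Propositions~\ref{prop:gap-alternat} and~\ref{prop:spectral-gap-complete}), has operator norm $\rho$ there.

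The heart of the argument is the following two identities, valid for every pair $j\neq j'$:
\[
\langle\beta_{j},\beta_{j'}\rangle=\frac{1}{n}\prod_{i=0}^{d}|A_{i}|,\qquad \langle\delta\beta_{j},\delta\beta_{j'}\rangle=\bigl|F(A_{0},\dots,A_{d})\bigr|.
\]
For the first, the disjointness of the $A_{i}$ makes the Gram matrix $(\langle\psi_{i},\psi_{i'}\rangle)$ equal to $\mathrm{diag}(|A_{i}|)-\tfrac1n bb^{T}$ with $b=(|A_{0}|,\dots,|A_{d}|)^{T}$, and $\langle\beta_{j},\beta_{j'}\rangle$ is exactly a cofactor of it (the signs from reordering the wedges being absorbed into the $(-1)^{j}$'s); Sherman--Morrison evaluates the off-diagonal cofactors and gives $\tfrac1n\prod_{i}|A_{i}|$. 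For the second, $\delta\beta_{j}$ is, up to $(-1)^{j}$, the $d$-cochain $\sigma\mapsto\det\!\bigl[\mathbf{1};(\mathbf{1}_{A_{i}})_{i\neq j}\bigr]$ (rows indexed by $\mathbf{1}$ and by the sets $A_{i}$, $i\neq j$, columns by the vertices of $\sigma$); a column-swap argument shows this determinant vanishes unless $\sigma$ meets each $A_{i}$ with $i\neq j$ in exactly one vertex, in which case it is $\pm1$. Hence the product of the cochains for $j$ and $j'$ kills every $d$-cell except those of $F(A_{0},\dots,A_{d})$ (a cell surviving for both indices meets every $A_{i}$ and so lies in $F$), and its sign is $\sgn(\pi)^{2}$ times a factor independent of $\sigma$, which the $(-1)^{j}$-normalisation turns into $+1$. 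I expect this determinant--sign bookkeeping to be the main technical obstacle; it is elementary but must be done carefully.

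Granting the identities, writing $\Delta^{+}=\delta^{*}\delta$ on $C^{d-1}$ and using $\langle\beta_{j},(kI-\Delta^{+})\beta_{j'}\rangle=k\langle\beta_{j},\beta_{j'}\rangle-\langle\delta\beta_{j},\delta\beta_{j'}\rangle$, we obtain for any $j\neq j'$
\[
\bigl|F(A_{0},\dots,A_{d})\bigr|-\frac{k}{n}\prod_{i}|A_{i}|=-\langle\beta_{j},(kI-\Delta^{+})\beta_{j'}\rangle.
\]
Since $\beta_{j},\beta_{j'}\in(\delta C^{d-2})^{\perp}$, Cauchy--Schwarz and the definition of $\rho$ bound the right-hand side in absolute value by $\rho\,\|\beta_{j}\|\,\|\beta_{j'}\|$. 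The same Gram computation gives $\|\beta_{j}\|^{2}=\prod_{i\neq j}|A_{i}|\cdot\tfrac{n-\sum_{i\neq j}|A_{i}|}{n}\le\prod_{i\neq j}|A_{i}|$, so choosing $j,j'$ to be the indices of the two largest sets and using that the geometric mean of $|A_{0}|,\dots,|A_{d}|$ is at most the geometric mean of its two largest values, we get $\|\beta_{j}\|\,\|\beta_{j'}\|\le\bigl(\prod_{i}|A_{i}|\bigr)^{d/(d+1)}$, which finishes the proof. (If some $A_{i}$ is empty, or if $X$ has no $d$-cells, both sides vanish and there is nothing to prove.)
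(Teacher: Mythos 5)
Your proof is correct, and it takes a genuinely different route from the one in the paper. The paper works with the uncentered indicator forms $\varphi=\delta_{A_{0},A_{1},\ldots,A_{d-1}}$ and $\psi=\delta_{A_{d},A_{1},\ldots,A_{d-1}}$, which do \emph{not} lie in $Z_{d-1}$; the main term $\frac{k}{n}\prod_{i}\left|A_{i}\right|$ is then extracted from the component $\mathbb{P}_{B^{d-1}}\psi$ using the identities $\Delta^{-}=n\cdot\mathbb{P}_{B^{d-1}}$ and $\Delta_{\overline{X}}^{+}=nI-\Delta_{X}$ of Proposition \ref{prop:full-skel-prop}, and the exponent $\frac{d}{d+1}$ is obtained by symmetrizing the resulting bound $\rho\sqrt{\left|A_{0}\right|\left|A_{d}\right|}\left|A_{1}\right|\cdots\left|A_{d-1}\right|$ over all permutations of the $A_{i}$ and taking a geometric mean. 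You instead center the indicators so that your $\beta_{j}$ lie in $Z_{d-1}$ from the outset (your $\iota_{\mathbf{1}}\beta_{j}=0$ is exactly $\partial_{d-1}\beta_{j}=0$ for a complete skeleton), which removes any need for Proposition \ref{prop:full-skel-prop}; the main term then falls out of the off-diagonal cofactor computation $\langle\beta_{j},\beta_{j'}\rangle=\frac{1}{n}\prod_{i}\left|A_{i}\right|$, and the exponent comes from choosing $j,j'$ to index the two largest sets rather than from averaging. I checked the two identities you flag as the technical core: the Sherman--Morrison cofactor evaluation is right (including $\left\Vert \beta_{j}\right\Vert ^{2}=\prod_{i\neq j}\left|A_{i}\right|\cdot\frac{n-\sum_{i\neq j}\left|A_{i}\right|}{n}$), and the sign bookkeeping for $\langle\delta\beta_{j},\delta\beta_{j'}\rangle$ works out: ordering a cell of $F\left(A_{0},\ldots,A_{d}\right)$ so that $\sigma_{l}\in A_{l}$, the determinant defining $\delta\beta_{j}$ reduces to the sign of the cycle $\left(0\,1\,\cdots\,j\right)$, namely $\left(-1\right)^{j}$, which your normalization cancels. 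Your approach buys two things: it bypasses the projection/complement-complex machinery, and because you drop the factor $\frac{n-\sum_{i\neq j}\left|A_{i}\right|}{n}\leq1$ only at the last step, it recovers for free the sharper estimate that the paper relegates to the remark following its proof. The cost is the exterior-algebra identification of $\Omega^{d-1}$ with $\bigwedge^{d}\mathbb{R}^{V}$ and the determinant computations, which are a repackaging of the paper's explicit sign arguments.
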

Here $\Delta^{+}$ is the Laplacian of $X$, which is defined in Section
\ref{sec:Notations-and-definitions}. The proof, and a formal definition
of $\rho$, appear in Section \ref{sub:The-Mixing-Lemma}.

\bigskip{}

A related measure of expansion in graphs is given by the convergence
rate of the random walk on it. As for the discrepancy, it is not enough
to bound the spectral gap but also the higher end of the Laplace spectrum
in order to understand this expansion. For example, on the bipartite
graphs mentioned earlier the random walk does not converge at all.
In \cite{PR12} we suggest a generalization of the notion of random
walk to general simplicial complexes, and study its connection to
the spectral properties of the complex.

\subsection{Geometric overlap}

If a graph $G=\left(V,E\right)$ has a large Cheeger constant, then
given a mapping $\varphi:V\rightarrow\mathbb{R}$, there exists a
point $x\in\mathbb{R}$ which is covered by many edges in the linear
extension of $\varphi$ to $E$ (namely, $x=\mathrm{median}\left(\left\{ \varphi\left(v\right)\,\middle|\, v\in V\right\} \right)$.
This observation led Gromov to define the \emph{geometric overlap
}of a complex \cite{Gro10}:
\begin{defn}
Let $X$ be a $d$-dimensional simplicial complex. The overlap of
$X$ is defined by
\[
\mbox{overlap}\left(X\right)=\min_{\varphi:V\rightarrow\mathbb{R}^{d}}\,\max_{x\in\mathbb{R}^{d}}\,\frac{\#\left\{ \sigma\in X^{d}\,\middle|\, x\in\mathrm{conv}\left\{ \varphi\left(v\right)\,\middle|\, v\in\sigma\right\} \right\} }{\left|X^{d}\right|}.
\]
In other words, $X$ has $\overlap\geq\varepsilon$ if for every simplicial
mapping of $X$ into $\mathbb{R}^{d}$ (a mapping induced linearly
by the images of the vertices), some point in $\mathbb{R}^{d}$ is
covered by at least an $\varepsilon$-fraction of the $d$-cells of
$X$. 
\end{defn}
A theorem of Pach \cite{Pac98}, together with Theorem \ref{thm:mixing}
yield a connection between the spectrum of the Laplacian and the overlap
property. 
\begin{cor}
\label{cor:Geometric_overlap_cor}Let $X$ be a $d$-complex with
a complete skeleton, and denote the average degree of a $\left(d-1\right)$-cell
in $X$ by $k$. If the nontrivial spectrum of the Laplacian of $X$
is contained in $\left[k-\varepsilon,k+\varepsilon\right]$, then
\[
\overlap\left(X\right)\geq\frac{c_{d}^{d}}{e^{d+1}}\left(c_{d}-\frac{\varepsilon\left(d+1\right)}{k}\right),
\]
where $c_{d}$ is Pach's constant from \cite{Pac98}.
\end{cor}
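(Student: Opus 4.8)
The plan is to combine Pach's selection theorem \cite{Pac98} with the Mixing Lemma (Theorem~\ref{thm:mixing}). We use Pach's theorem in the form: there is a constant $c_{d}>0$ such that for any $d+1$ finite, pairwise disjoint point sets $Y_{0},\ldots,Y_{d}\subseteq\mathbb{R}^{d}$ there are subsets $Y_{i}'\subseteq Y_{i}$ with $\left|Y_{i}'\right|\geq c_{d}\left|Y_{i}\right|$ and a point $p$ lying in $\mathrm{conv}\left\{ y_{0},\ldots,y_{d}\right\} $ for every transversal $\left(y_{0},\ldots,y_{d}\right)\in Y_{0}'\times\cdots\times Y_{d}'$. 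The strategy is: given $\varphi\colon V\to\mathbb{R}^{d}$, partition $V$ into $d+1$ nearly-equal color classes, apply Pach to obtain large subclasses and a common point $p$, note that the rainbow tuples which happen to be cells of $X$ are exactly $F$ of the subclasses, and lower-bound $\left|F\right|$ by the Mixing Lemma.

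First I would reduce to $\varphi$ injective with $\varphi\left(V\right)$ in general position by a standard perturbation/compactness argument: if $\varphi_{j}\to\varphi$ with each $\varphi_{j}$ generic and $p_{j}\in\mathrm{conv}\,\varphi_{j}\left(V\right)$ is covered by $N$ cells under $\varphi_{j}$, then a subsequence $p_{j}\to p$ (the $p_{j}$ lie in a fixed bounded set), and by finiteness of $X^{d}$ some fixed family of $\geq N$ cells covers all these $p_{j}$; since membership in a convex hull is a closed condition, this family covers $p$ under $\varphi$. So assume $\varphi$ generic. Partition $V=A_{0}\sqcup\cdots\sqcup A_{d}$ with the $\left|A_{i}\right|$ as equal as possible; to display the computation cleanly take first $\left(d+1\right)\mid n$, so $\left|A_{i}\right|=n/\left(d+1\right)$. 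Pach's theorem applied to $Y_{i}=\varphi\left(A_{i}\right)$ produces $A_{i}'\subseteq A_{i}$ with $\left|A_{i}'\right|\geq c_{d}n/\left(d+1\right)$ and a point $p$ in every transversal simplex; since $X$ has a complete skeleton, the transversal tuples that are $d$-cells are precisely $F\left(A_{0}',\ldots,A_{d}'\right)$, so $p$ meets at least $\left|F\left(A_{0}',\ldots,A_{d}'\right)\right|$ cells of $X$. Setting $m=\prod_{i}\left|A_{i}'\right|\geq\left(c_{d}n/\left(d+1\right)\right)^{d+1}$, the Mixing Lemma and the hypothesis $\rho\leq\varepsilon$ (the nontrivial eigenvalues of $kI-\Delta^{+}$ are $k-\mu$ with $\mu\in\left[k-\varepsilon,k+\varepsilon\right]$, hence of absolute value $\leq\varepsilon$) give
\[
\left|F\left(A_{0}',\ldots,A_{d}'\right)\right|\;\geq\;\frac{k\,m}{n}-\varepsilon\,m^{\frac{d}{d+1}}\;\geq\;\left(\frac{c_{d}n}{d+1}\right)^{d}\frac{k}{d+1}\left(c_{d}-\frac{\varepsilon\left(d+1\right)}{k}\right),
\]
where the last step substitutes the lower bound for $m$ into $t\mapsto\frac{k}{n}t-\varepsilon t^{d/(d+1)}$, legitimate because this map is nondecreasing for $t\geq\left(c_{d}n/\left(d+1\right)\right)^{d+1}$ exactly when $c_{d}-\varepsilon\left(d+1\right)/k\geq0$; if that fails the asserted bound is nonpositive and there is nothing to prove.

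It remains to normalize by $\left|X^{d}\right|$. Double-counting incidences of $\left(d-1\right)$-cells in $d$-cells, together with the complete skeleton ($\left|X^{d-1}\right|=\binom{n}{d}$) and the definition of $k$, gives $\left(d+1\right)\left|X^{d}\right|=k\binom{n}{d}$; hence, using $\binom{n}{d}\leq n^{d}/d!$,
\[
\overlap\left(X\right)\;\geq\;\frac{\left|F\left(A_{0}',\ldots,A_{d}'\right)\right|}{\left|X^{d}\right|}\;\geq\;\frac{\left(c_{d}n/\left(d+1\right)\right)^{d}}{\binom{n}{d}}\left(c_{d}-\frac{\varepsilon\left(d+1\right)}{k}\right)\;\geq\;\frac{c_{d}^{d}\,d!}{\left(d+1\right)^{d}}\left(c_{d}-\frac{\varepsilon\left(d+1\right)}{k}\right),
\]
and the corollary follows from the elementary bound $\frac{d!}{\left(d+1\right)^{d}}=\frac{\left(d+1\right)!}{\left(d+1\right)^{d+1}}\geq e^{-\left(d+1\right)}$ (one term of the series for $e^{d+1}$). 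The step I expect to be the only real nuisance is the rounding when $\left(d+1\right)\nmid n$: one then uses classes of sizes $\lfloor n/\left(d+1\right)\rfloor$ and $\lceil n/\left(d+1\right)\rceil$, which replaces the factors $n/\left(d+1\right)$ above by $\lfloor n/\left(d+1\right)\rfloor$ and perturbs the final prefactor and bracket by terms of relative order $O\!\left(d/n\right)$; one checks that these corrections are harmless (at worst they vanish as $n\to\infty$). Everything else is bookkeeping; the only non-elementary input is Pach's theorem, which is quoted as a black box.
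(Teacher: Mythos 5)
Your proposal is correct and follows essentially the same route as the paper: equipartition $V$ into $d+1$ classes, apply Pach's theorem to get large subclasses with a common point, lower-bound $|F(A_0',\ldots,A_d')|$ via the Mixing Lemma with $\rho\leq\varepsilon$, and normalize by $|X^d|=\frac{k}{d+1}\binom{n}{d}$. The only differences are bookkeeping: you use $\binom{n}{d}\leq n^d/d!$ and $(d+1)!\geq(d+1)^{d+1}e^{-(d+1)}$ where the paper uses $\binom{n}{d}\leq(en/d)^d$, and you spell out the genericity/rounding/monotonicity details that the paper elides.
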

The proof appears in Section \ref{sub:Gromov's-Geometric-Overlap}.
As an application of this corollary, we show that Linial-Meshulam
complexes have geometric overlap for suitable parameters:
\begin{cor}
\label{cor:random-overlap}There exist $\vartheta>0$ such that for
large enough $C$ a.a.s.\ $\overlap\left(X\left(d,n,\frac{C\cdot\log n}{n}\right)\right)>\vartheta$.
\end{cor}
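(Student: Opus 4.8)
The plan is to combine Corollary~\ref{cor:Geometric_overlap_cor} with the spectral analysis of random Linial--Meshulam complexes due to Gundert and Wagner~\cite{GW12}. Fix the dimension $d$, put $p=p(n)=\frac{C\log n}{n}$ and let $X=X(d,n,p)$. Any fixed $(d-1)$-cell is contained in exactly $n-d$ potential $d$-cells, each present independently with probability $p$, so the average degree of a $(d-1)$-cell is $k=(n-d)p=(1-o(1))\,np=(1-o(1))\,C\log n$; moreover a.a.s.\ \emph{every} $(d-1)$-cell has degree $(1+o(1))k$, so the choice between $k$ and $\frac{|X^{d}|}{\binom{n}{d+1}}$ in Remark~\ref{rem:discrepancy} is immaterial here. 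By Corollary~\ref{cor:Geometric_overlap_cor} it therefore suffices to exhibit a constant $C$ and an $\varepsilon=\varepsilon(n)$ such that, a.a.s., the nontrivial spectrum of $\Delta^{+}$ is contained in $[k-\varepsilon,k+\varepsilon]$ with $\frac{\varepsilon(d+1)}{k}\le\frac{c_d}{2}$, where $c_d$ is Pach's constant.

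Such a bound is provided by \cite{GW12}: for $p=\frac{C\log n}{n}$ with $C$ large enough, a.a.s.\ every nontrivial eigenvalue of $\Delta^{+}$ lies in $\bigl[k(1-\delta),\,k(1+\delta)\bigr]$ with $\delta=O\!\left(1/\sqrt{C}\right)$, the implied constant depending only on $d$. (This is the two-sided refinement of the spectral-gap estimate that also underlies Corollary~\ref{cor:Random_cor}(1), where the same relative deviation $\delta$ produces the $O(\sqrt{C})$ loss, since $k(1-\delta)=(C-O(\sqrt C))\log n$.) Taking $\varepsilon=k\delta$ we get
\[
\frac{\varepsilon(d+1)}{k}=(d+1)\,\delta=O\!\left(\frac{d+1}{\sqrt{C}}\right),
\]
which tends to $0$ as $C\to\infty$, uniformly in $n$. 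Hence one may fix $C=C(d)$ so large that $\frac{\varepsilon(d+1)}{k}\le\frac{c_d}{2}$ for all large $n$, and then Corollary~\ref{cor:Geometric_overlap_cor} gives, a.a.s.,
\[
\overlap(X)\ \ge\ \frac{c_d^{\,d}}{e^{d+1}}\left(c_d-\frac{\varepsilon(d+1)}{k}\right)\ \ge\ \frac{c_d^{\,d+1}}{2\,e^{d+1}}\ =:\ \vartheta\ >\ 0,
\]
with $\vartheta$ depending only on $d$. This proves the corollary.

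The part that requires care is extracting from \cite{GW12} exactly this \emph{two-sided} concentration statement --- simultaneously a lower bound $\lambda(X)\ge k-\varepsilon$ on the smallest nontrivial eigenvalue and an upper bound $k+\varepsilon$ on the largest one, equivalently $\rho\le\varepsilon$ --- and verifying that at the threshold density $p\asymp\frac{\log n}{n}$ the relative deviation is genuinely $O(1/\sqrt C)$ rather than larger. One must also reconcile conventions: the Laplacian and the ``trivial'' part of the spectrum in \cite{GW12} have to be identified with $\Delta^{+}$ and the eigenvalues forced by the complete $(d-1)$-skeleton as in Section~\ref{sec:Notations-and-definitions}, and one should check that the $O(\cdot)$ there is uniform in $n$ with constant depending only on $d$, which is what legitimizes the order of quantifiers (``fix $C$ large, then let $n\to\infty$''). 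Once these bookkeeping points are settled, the geometric content is entirely supplied by Corollary~\ref{cor:Geometric_overlap_cor} and nothing further is needed.
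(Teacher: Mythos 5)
Your proposal follows essentially the same route as the paper: a two-sided spectral concentration statement for $X\left(d,n,\frac{C\log n}{n}\right)$ with relative deviation $O\left(1/\sqrt{C}\right)$, fed into the Pach-based overlap bound, with $C$ fixed large enough that the deviation term is dominated by $c_d$. The only differences are that the paper actually proves that concentration as Lemma \ref{lem:spec_bound} --- combining a Chernoff bound on the degrees with Theorem 7 of \cite{GW12} --- which is precisely the ``part that requires care'' you defer to the literature, and that it applies the $\alpha=\lambda_{avg}$ variant of the overlap bound (Remark \ref{rem:overlap-lambda}) rather than Corollary \ref{cor:Geometric_overlap_cor}, a negligible difference.
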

Again, this is a part of Corollary \ref{cor:Random_unified}, which
is proved in Section \ref{sub:Expansion-in-random}.

\bigskip{}
The structure of the paper is as follows: in Section 2 we present
the basic definitions relating to simplicial complexes and their spectral
theory. Section 3 is devoted to proving basic properties of the high
dimensional Laplacians. In Section 4 we prove the theorems and corollaries
stated in the introduction, and discuss the possibility of a lower
Cheeger inequality. Finally, Section 5 lists some open questions.
\begin{acknowledgement*}
The authors would like to thank Alex Lubotzky for initiating our study
of spectral expansion of complexes. We would also like to express
our gratitude for the suggestions made to us by Noam Berger, Konstantin
Golubev, Gil Kalai, Nati Linial, Doron Puder, Doron Shafrir, Uli Wagner,
and Andrzej \.{Z}uk. We are thankful for the support of the ERC.
\end{acknowledgement*}

\section{\label{sec:Notations-and-definitions}Notations and definitions}

Recall that $X$ denotes a finite $d$-dimensional simplicial complex
with vertex set $V$ of size $n$, and that $X^{j}$ denotes the set
of $j$-cells of $X$, where $-1\leq j\leq d$. In particular, we
have $X^{-1}=\left\{ \varnothing\right\} $. For $j\geq1$, every
$j$-cell $\sigma=\left\{ \sigma_{0},\ldots,\sigma_{j}\right\} $
has two possible orientations, corresponding to the possible orderings
of its vertices, up to an even permutation ($1$-cells and the empty
cell have only one orientation). We denote an oriented cell by square
brackets, and a flip of orientation by an overbar. For example, one
orientation of $\sigma=\left\{ x,y,z\right\} $ is $\left[x,y,z\right]$,
which is the same as $\left[y,z,x\right]$ and $\left[z,x,y\right]$.
The other orientation of $\sigma$ is $\overline{\left[x,y,z\right]}=\left[y,x,z\right]=\left[x,z,y\right]=\left[z,y,x\right]$.
We denote by $X_{\pm}^{j}$ the set of oriented $j$-cells (so that
$\left|X_{\pm}^{j}\right|=2\left|X^{j}\right|$ for $j\geq1$ and
$X_{\pm}^{j}=X^{j}$ for $j=-1,0$). 

We now describe the \emph{discrete Hodge theory} due to Eckmann \cite{Eck44}.
This is a discrete analogue of Hodge theory in Riemannian geometry,
but in contrast, the proofs of the statements are all exercises in
finite-dimensional linear algebra. Furthermore, it applies to any
complex, and not only to manifolds.

The space of \emph{$j$-forms on $X$}, denoted $\Omega^{j}\left(X\right)$,\emph{
}is the vector space of skew-symmetric functions on oriented $j$-cells:
\[
\Omega^{j}=\Omega^{j}\left(X\right)=\left\{ f:X_{\pm}^{j}\rightarrow\mathbb{R}\,\middle|\, f\left(\overline{\sigma}\right)=-f\left(\sigma\right)\;\forall\sigma\in X_{\pm}^{j}\right\} .
\]
In particular, $\Omega^{0}$ is the space of functions on $V$, and
$\Omega^{-1}=\mathbb{R}^{\left\{ \varnothing\right\} }$ can be identified
in a natural way with $\mathbb{R}$. We endow each $\Omega^{i}$ with
the inner product
\begin{equation}
\left\langle f,g\right\rangle =\sum_{\sigma\in X^{i}}f\left(\sigma\right)g\left(\sigma\right)\label{eq:inner-prod}
\end{equation}
(note that $f\left(\sigma\right)g\left(\sigma\right)$ is well defined
even without choosing an orientation for $\sigma$).

For a cell $\sigma$ (either oriented or non-oriented) and a vertex
$v$, we write $v\sim\sigma$ if $v\notin\sigma$ and $\left\{ v\right\} \cup\sigma$
is a cell in $X$ (here we ignore the orientation of $\sigma$). If
$\sigma=\left[\sigma_{0},\ldots,\sigma_{j}\right]$ is oriented and
$v\sim\sigma$, then $v\sigma$ denotes the oriented $\left(j+1\right)$-cell
$\left[v,\sigma_{0},\ldots,\sigma_{j}\right]$. An oriented $\left(j+1\right)$-cell
$\left[\sigma_{0},\ldots,\sigma_{j}\right]$ induces orientations
on the $j$-cells which form its boundary, as follows: the face $\left\{ \sigma_{0},\ldots,\sigma_{i-1},\sigma_{i+1},\ldots,\sigma_{j}\right\} $
is oriented as $\left(-1\right)^{i}\left[\sigma_{1},\ldots,\sigma_{i-1},\sigma_{i+1},\ldots,\sigma_{k}\right]$,
where $\left(-1\right)\tau=\overline{\tau}$.

The \emph{$j^{\mathrm{th}}$ boundary} \emph{operator }$\partial_{j}:\Omega^{j}\rightarrow\Omega^{j-1}$
is 
\[
\left(\partial_{j}f\right)\left(\sigma\right)=\sum\limits _{v\sim\sigma}f\left(v\sigma\right).
\]
The sequence $\left(\Omega^{j},\partial_{j}\right)$ is a chain complex,
i.e., $\partial_{j-1}\partial_{j}=0$ for all $j$, and one denotes
\begin{alignat*}{2}
Z_{j} & =\ker\partial_{j} &  & j\!-\!\mathrm{cycles}\\
B_{j} & =\im\partial_{j+1} & \qquad & j\!-\!\mathrm{boundaries}\\
H_{j} & =\nicefrac{Z_{j}}{B_{j}} &  & \mathrm{the\:}j^{\mathrm{th}}\:\mathrm{homology\: of}\: X\:\left(\mathrm{over}\:\mathbb{R}\right).
\end{alignat*}
The adjoint of $\partial_{j}$ w.r.t.\ the inner product \eqref{eq:inner-prod}
is the \emph{co-boundary operator} $\partial_{j}^{*}:\Omega^{j-1}\rightarrow\Omega^{j}$
given by 
\[
\left(\partial_{j}^{*}f\right)\left(\sigma\right)=\negthickspace\sum_{{\tau\mathrm{\, is\, in\, the}\atop \mathrm{boundary\, of\,}\sigma}}\negthickspace f\left(\tau\right)=\sum_{i=0}^{j}\left(-1\right)^{i}f\left(\sigma\backslash\sigma_{i}\right),
\]
 where $\sigma\backslash\sigma_{i}=\left[\sigma_{0},\sigma_{1},\ldots,\sigma_{i-1},\sigma_{i+1},\ldots\sigma_{j}\right]$.
Here the standard terms are
\begin{alignat*}{2}
Z^{j} & =\ker\partial_{j+1}^{*}=B_{j}^{\bot} & \qquad & \mathrm{closed\:}j\!-\!\mathrm{forms}\\
B^{j} & =\im\partial_{j}^{*}=Z_{j}^{\bot} &  & \mathrm{exact\:}j\!-\!\mathrm{forms}\\
H^{j} & =\nicefrac{Z^{j}}{B^{j}} &  & \mathrm{the\:}j^{\mathrm{th}}\:\mathrm{cohomology\: of}\: X\:\left(\mathrm{over}\:\mathbb{R}\right).
\end{alignat*}
The \emph{upper, lower, }and\emph{ full Laplacians} $\Delta^{+},\Delta^{-},\Delta:\Omega^{d-1}\rightarrow\Omega^{d-1}$
are defined by
\[
\Delta^{+}=\partial_{{d}}\partial_{{d}}^{*},\qquad\quad\Delta^{-}=\partial_{{d-1}}^{*}\partial_{{d-1}},\qquad\mathrm{and}\qquad\Delta=\Delta^{+}+\Delta^{-},
\]
respectively%
\footnote{\ More generally, one can define the \emph{$j^{\mathrm{th}}$ lower
Laplacian} $\Delta_{j}^{-}:\Omega^{j}\rightarrow\Omega^{j}$ by $\Delta_{j}^{-}=\partial_{j}^{*}\partial_{j}$,
and similarly for $\Delta_{j}^{+}$ and $\Delta_{j}$. For our purposes,
$\Delta_{d-1}^{-}$, $\Delta_{d-1}^{+}$ and $\Delta_{d-1}$ are the
relevant ones.%
}. All the Laplacians decompose (as a direct sum of linear operators)
with respect to the orthogonal decompositions $\Omega^{d-1}=B^{d-1}\oplus Z_{d-1}=B_{d-1}\oplus Z^{d-1}$.
In addition, $\ker\Delta^{+}=Z^{d-1}$ and $\ker\Delta^{-}=Z_{d-1}$. 

The space of \emph{harmonic $\left(d-1\right)$-forms} on $X$ is
$\mathcal{H}_{d-1}=\ker\Delta$. If $f\in\mathcal{H}_{d-1}$ then
\[
0=\left\langle \Delta f,f\right\rangle =\left\langle \partial_{d-1}f,\partial_{d-1}f\right\rangle +\left\langle \partial_{d}^{*}f,\partial_{d}^{*}f\right\rangle 
\]
which shows that $\mathcal{H}_{d-1}=Z^{d-1}\cap Z_{d-1}$. This gives
the so-called \emph{discrete Hodge decomposition 
\[
\Omega^{d-1}=B^{d-1}\oplus\mathcal{H}_{d-1}\oplus B_{d-1}.
\]
}In particular, it follows that the space of harmonic forms can be
identified with the cohomology of $X$:
\[
H^{d-1}=\frac{Z^{d-1}}{B^{d-1}}=\frac{B_{d-1}^{\bot}}{B^{d-1}}=\frac{B^{d-1}\oplus\mathcal{H}_{d-1}}{B^{d-1}}\cong\mathcal{H}_{d-1}.
\]
The same holds for the homology of $X$, giving 
\begin{equation}
H^{d-1}\cong\mathcal{H}_{d-1}\cong H_{d-1}.\label{eq:hodge}
\end{equation}
For comparison, the original Hodge decomposition states that for a
Riemannian manifold $M$ and $0\leq j\leq\dim M$, there is an orthogonal
decomposition 
\[
\Omega^{j}\left(M\right)=d\left(\Omega^{j-1}\left(M\right)\right)\oplus\mathcal{H}^{j}\left(M\right)\oplus\delta\left(\Omega^{j+1}\left(M\right)\right)
\]
where $\Omega^{j}$ are the smooth $j$-forms on $M$, $d$ is the
exterior derivative, $\delta$ its Hodge dual, and $\mathcal{H}^{j}$
the smooth harmonic $j$-forms on $M$. As in the discrete case, this
gives an isomorphism between the $j^{\mathrm{th}}$ de-Rham cohomology
of $M$ and the space of harmonic $j$-forms on it.
\begin{example*}
For $j=0$, $Z^{0}$ consists of the locally constant functions (functions
constant on connected components); $B^{0}$ consists of the constant
functions; $Z_{0}$ of the functions whose sum vanishes, and $B_{0}$
of the functions whose sum on each connected component vanishes. 

For $j=1$, $Z^{1}$ are the forms whose sum along the boundary of
every triangle in the complex vanishes; in $B^{1}$ lie the forms
whose sum along every closed path vanishes; $Z_{1}$ are the \emph{Kirchhoff
forms}, also known as \emph{flows}, those for which the sum over all
edges incident to a vertex, oriented inward, is zero; and $B_{1}$
are the forms spanned (over $\mathbb{R}$) by oriented boundaries
of triangles in the complex. The chain of simplicial forms in dimensions
$-1$ to $2$ is depicted in Figure \ref{fig:lowermost-part}.

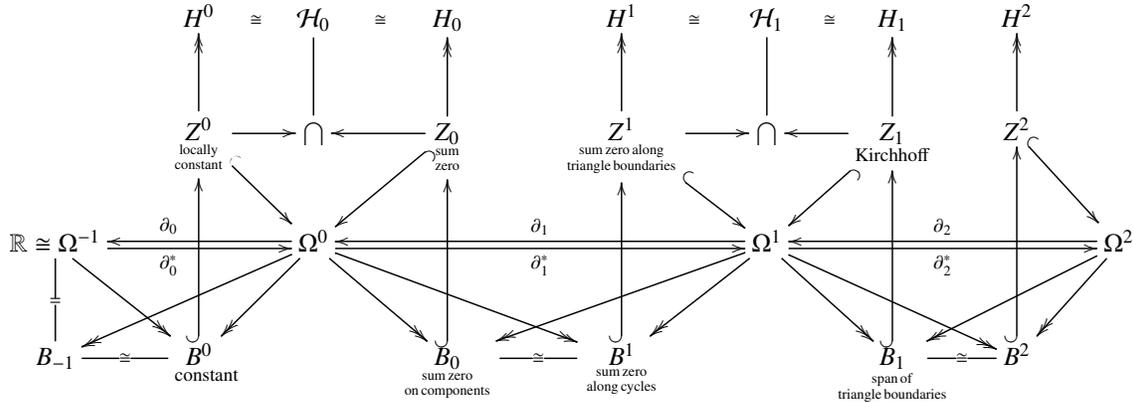
\begin{figure}[H]
\centering{}\scalebox{0.9}{$\xymatrix{ & H^{0}\ar@{}[r]|\cong & \mathcal{H}_{0}\ar@{}[r]|\cong & H_{0} & H^{1}\ar@{}[r]|\cong & \mathcal{H}_{1}\ar@{}[r]|\cong & H_{1} & H^{2}\\
 & \underset{{\mathrm{locally}\atop \mathrm{constant}}}{Z^{0}}\ar@{>>}[u]\ar@{^{(}->}[dr]\ar@{->}[r] & \bigcap\ar@{-}[u] & \underset{{\mathrm{sum}\atop \mathrm{zero}}}{Z_{0}}\ar@{>>}[u]\ar@{->}[l]\ar@{^{(}->}[dl] & \underset{{\mathrm{sum\, zero\, along}\atop \mathrm{triangle\, boundaries}}}{Z^{1}}\ar@{>>}[u]\ar@{^{(}->}[dr]\ar@{->}[r] & \bigcap\ar@{-}[u] & \underset{\mathrm{Kirchhoff}}{Z_{1}}\ar@{>>}[u]\ar@{->}[l]\ar@{^{(}->}[dl] & Z^{2}\ar@{>>}[u]\ar@{^{(}->}[dr]\\
\mathbb{R}\cong\Omega^{-1}\ar@{-}[d]|=\ar@<-1.5pt>[rr]_{\partial_{0}^{*}\quad\;}\ar@{->>}[dr] &  & \Omega^{0}\ar@{>>}[dl]\ar@{>>}[dr]\ar@{>>}[drr]\ar@{>>}[dll]\ar@<-1.5pt>[ll]_{\partial_{0}\quad\;}\ar@<-1.5pt>[rrr]_{\partial_{1}^{*}} &  &  & \Omega^{1}\ar@{>>}[dl]\ar@{>>}[dr]\ar@{>>}[dll]\ar@{>>}[drr]\ar@<-1.5pt>[lll]_{\partial_{1}}\ar@<-1.5pt>[rrr]_{\partial_{2}^{*}} &  &  & \Omega^{2}\ar@{>>}[dl]\ar@{>>}[dll]\ar@<-1.5pt>[lll]_{\partial_{2}}\\
B_{-1} & \underset{\mathrm{constant}\negthickspace\negthickspace\negthickspace}{B^{0}}\ar@{^{(}->}[uu]\ar@{-}[l]|-\cong &  & \underset{{\mathrm{sum\, zero}\atop \mathrm{on\, components}}}{B_{0}}\ar@{^{(}->}[uu]\ar@{-}[r]|-\cong & \underset{{\mathrm{sum\, zero}\atop \mathrm{along\, cycles}}}{B^{1}}\ar@{^{(}->}[uu] &  & \negthickspace\negthickspace\negthickspace\negthickspace\underset{{{\mathrm{span\, of}}\atop \mathrm{triangle\, boundaries}}}{B_{1}}\negthickspace\negthickspace\negthickspace\negthickspace\ar@{^{(}->}[uu]\ar@{-}[r]|-\cong & B^{2}\ar@{^{(}->}[uu]
}
$}\caption{\label{fig:lowermost-part}The lowermost part of the chain complex
of simplicial forms.}
\end{figure}

\end{example*}

\subsection{\label{sub:gap-def}Definition of the spectral gap}

Every graph has a ``trivial zero'' in the spectrum of its upper
Laplacian, corresponding to the constant functions. There can be more
zeros in the spectrum, and these encode information about the graph
(its connectedness), while the first one does not. Similarly, for
a $d$-dimensional complex, the space $B^{d-1}$ is always in the
kernel of the upper Laplacian, and considered to be its ``trivial
zeros''. The existence of more zeros indicates a nontrivial $\left(d-1\right)$-cohomology,
since it means that $B^{d-1}\subsetneq\ker\Delta^{+}=Z^{d-1}$. As
$\left(B^{d-1}\right)^{\bot}=Z_{d-1}$, this leads to the following
definition:
\begin{defn}
\label{def:The-spectral-gap}The spectral gap of a $d$-dimensional
complex $X$, denoted $\lambda\left(X\right)$, is the minimal eigenvalue
of the upper or the full Laplacian on $\left(d-1\right)$-cycles:
\[
\lambda\left(X\right)=\min\Spec\left(\Delta\big|_{Z_{{d-1}}}\right)=\min\Spec\left(\Delta^{+}\big|_{Z_{{d-1}}}\right)
\]
(the equality follows from $\Delta\big|_{Z_{d-1}}\equiv\Delta^{+}\big|_{Z_{d-1}}$.)
\end{defn}
The following proposition gives two more characterizations of the
spectral gap. For complexes with a complete skeleton we shall obtain
even more explicit characterizations in Proposition \ref{prop:spectral-gap-complete}.
\begin{prop}
\label{prop:gap-alternat}Let $\Spec\Delta^{+}=\left\{ \lambda_{0}\leq\lambda_{1}\leq\ldots\leq\lambda_{\left|X^{d-1}\right|-1}\right\} $.
\begin{enumerate}
\item If $\beta_{j}=\dim H_{j}$ is the $j^{\mathrm{th}}$ (reduced) Betti
number of $X$, then 
\[
\lambda\left(X\right)=\lambda_{r}\qquad\mathrm{where}\qquad r=\left(\left|X^{d-1}\right|-\beta_{d-1}\right)-\left(\left|X^{d}\right|-\beta_{d}\right).
\]

\item $\lambda\left(X\right)$ is the minimal nonzero eigenvalue of $\Delta^{+}$,
unless $X$ has a nontrivial $\left(d-1\right)^{\mathrm{th}}$-homology,
in which case $\lambda\left(X\right)=0$.
\end{enumerate}
\end{prop}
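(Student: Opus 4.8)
The whole proposition should fall out of the way the upper Laplacian $\Delta^{+}$ respects the Hodge decomposition. The first thing I would record (everything here is already in Section~\ref{sec:Notations-and-definitions}) is that $\Delta^{+}$ is block-diagonal for the Hodge decomposition $\Omega^{d-1}=B^{d-1}\oplus\mathcal{H}_{d-1}\oplus B_{d-1}$: it vanishes on $B^{d-1}\oplus\mathcal{H}_{d-1}=Z^{d-1}=\ker\Delta^{+}$, and it is positive definite on $B_{d-1}$, since it is positive semidefinite (being $\partial_{d}\partial_{d}^{*}$) and $\ker\Delta^{+}\cap B_{d-1}=Z^{d-1}\cap B_{d-1}=0$ by directness of the decomposition. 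Because $B^{d-1}=Z_{d-1}^{\bot}$ we have $Z_{d-1}=\mathcal{H}_{d-1}\oplus B_{d-1}$, so $\Delta^{+}\big|_{Z_{d-1}}$ is the orthogonal sum of the zero operator on $\mathcal{H}_{d-1}$ and the positive-definite operator $\Delta^{+}\big|_{B_{d-1}}$; likewise $\Spec\Delta^{+}$ consists of $\dim B^{d-1}+\dim\mathcal{H}_{d-1}$ copies of $0$ together with the (strictly positive) spectrum of $\Delta^{+}\big|_{B_{d-1}}$. By \eqref{eq:hodge}, $\dim\mathcal{H}_{d-1}=\beta_{d-1}$.

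Part~(2) is now immediate: $\lambda(X)=\min\Spec\bigl(\Delta^{+}\big|_{Z_{d-1}}\bigr)$ equals $0$ exactly when $\mathcal{H}_{d-1}\neq0$, i.e.\ when $\beta_{d-1}\neq0$; and when $\mathcal{H}_{d-1}=0$ one has $Z_{d-1}=B_{d-1}$, so $\lambda(X)=\min\Spec\bigl(\Delta^{+}\big|_{B_{d-1}}\bigr)$, which by the previous paragraph is precisely the smallest nonzero eigenvalue of $\Delta^{+}$. For part~(1), set $r:=\dim B^{d-1}$; then the sorted list $\lambda_{0}\leq\lambda_{1}\leq\ldots$ has its zero entries exactly in positions $0,\ldots,r+\beta_{d-1}-1$ and its positive entries in positions $r+\beta_{d-1},\ldots$. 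Hence if $\beta_{d-1}>0$ then $\lambda_{r}=0=\lambda(X)$, while if $\beta_{d-1}=0$ then $\lambda_{r}=\lambda_{r+\beta_{d-1}}$ is the smallest nonzero eigenvalue, which again equals $\lambda(X)$ by part~(2). It then only remains to identify $r$ with the stated quantity by a dimension count: $\dim B^{d-1}=|X^{d-1}|-\dim Z_{d-1}$ since $B^{d-1}=Z_{d-1}^{\bot}$; $\dim Z_{d-1}=\dim B_{d-1}+\beta_{d-1}=\rank\partial_{d}+\beta_{d-1}$ by the Hodge decomposition; and $\rank\partial_{d}=|X^{d}|-\dim Z_{d}=|X^{d}|-\beta_{d}$, the last equality because $\dim X=d$ forces $B_{d}=\im\partial_{d+1}=0$ and hence $H_{d}=Z_{d}$. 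Combining these yields $r=\bigl(|X^{d-1}|-\beta_{d-1}\bigr)-\bigl(|X^{d}|-\beta_{d}\bigr)$, as claimed.

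I do not expect a genuine obstacle; the argument is just bookkeeping of multiplicities against this block structure. The only point requiring care is that the phrase ``smallest nonzero eigenvalue of $\Delta^{+}$'' presupposes $\Delta^{+}\not\equiv0$, equivalently $B_{d-1}\neq0$: if $B_{d-1}=0$ but $\mathcal{H}_{d-1}\neq0$ both assertions still read $\lambda(X)=\lambda_{r}=0$, and the fully degenerate case $Z_{d-1}=0$ (every $(d-1)$-form exact, so that $\lambda(X)$ would be a minimum over an empty spectrum) lies outside the intended scope — it cannot occur, for instance, once $X$ has a complete skeleton and enough vertices.
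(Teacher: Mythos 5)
Your proof is correct and follows essentially the same route as the paper: decompose $\Delta^{+}$ along $\Omega^{d-1}=B^{d-1}\oplus\mathcal{H}_{d-1}\oplus B_{d-1}$, identify $\ker\Delta^{+}\big|_{Z_{d-1}}$ with $\mathcal{H}_{d-1}\cong H_{d-1}$, and count $r=\dim B^{d-1}$. The only cosmetic difference is that you compute $r$ via $B^{d-1}=Z_{d-1}^{\bot}$ and rank--nullity for $\partial_{d}$, whereas the paper runs the analogous recursion on the coboundary side; your explicit tracking of the zero multiplicities when $\beta_{d-1}>0$ is, if anything, slightly more careful than the paper's phrasing.
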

$ $

\vspace{-15pt}

\begin{rem*}
For a graph $G=\left(V,E\right)$, Definition \ref{def:The-spectral-gap}
states that $\lambda\left(G\right)$ is the minimal eigenvalue of
the Laplacian on a function which sums to zero. By Proposition \ref{prop:gap-alternat}
$\left(1\right)$ we have $\lambda\left(G\right)=\lambda_{r}$, where
$r=n-\left|E\right|-\beta_{0}+\beta_{1}$. Since $\beta_{0}+1$ is
the number of connected components in $G$, and $\beta_{1}$ is the
number of cycles in $G$, by Euler's formula 
\[
r=n-\left|E\right|-\beta_{0}+\beta_{1}=\chi\left(G\right)-\left(\chi\left(G\right)-1\right)=1
\]
and therefore $\lambda\left(G\right)=\lambda_{1}$. From $\left(2\right)$
in Proposition \ref{prop:gap-alternat} we obtain that $\lambda\left(G\right)$
is the minimal nonzero eigenvalue of $G$'s Laplacian if $G$ is connected,
and zero otherwise.\end{rem*}
\begin{proof}
Since $\Delta^{+}$ decomposes w.r.t.\ $\Omega^{d-1}=B^{{d-1}}\oplus Z_{{d-1}}$,
and $\Delta^{+}\big|_{B^{d-1}}\equiv0$, the spectrum of $\Delta^{+}$
consists of $r=\dim B^{d-1}$ zeros, followed by the spectral gap.
By \eqref{eq:hodge}, 
\[
H_{d-1}\cong\mathcal{H}_{d-1}=Z^{d-1}\cap Z_{d-1}=\ker\Delta^{+}\big|_{Z_{d-1}}
\]
so that $\lambda\left(X\right)=0$ if and only if $H_{d-1}\neq0$,
i.e.\ $X$ has a nontrivial $\left(d-1\right)^{\mathrm{th}}$-homology.
This also shows that if $H_{d-1}=0$, then $\lambda\left(X\right)$
is the smallest nonzero eigenvalue of $\Delta^{+}$. Finally, to compute
$r=\dim B^{d-1}$, we observe that 
\begin{align*}
\dim B^{j-1} & =\dim Z^{j-1}-\dim H^{j-1}=\nul\partial_{j}^{*}-\beta_{j-1}\\
 & =\dim\Omega^{j-1}-\rank\partial_{j}^{*}-\beta_{j-1}=\left|X^{j-1}\right|-\dim B^{j}-\beta_{j-1}
\end{align*}
and therefore
\begin{align*}
r & =\dim B^{d-1}=\left|X^{d-1}\right|-\dim B^{d}-\beta_{d-1}=\left|X^{d-1}\right|-\left(\left|X^{d}\right|-\dim B^{d+1}-\beta_{d}\right)-\beta_{d-1}\\
 & =\left(\left|X^{d-1}\right|-\beta_{d-1}\right)-\left(\left|X^{d}\right|-\beta_{d}\right).
\end{align*}
\vspace{-33pt}

\end{proof}

\section{Properties of the Laplacians }

In this section we begin the study of the Laplacians and their spectra.
We start by writing the Laplacians in a more explicit form. 

For the upper Laplacian, if $f\in\Omega^{d-1}$ and $\sigma\in X^{d-1}$,
then 
\begin{align}
\left(\Delta^{+}f\right)\left(\sigma\right) & =\sum_{v\sim\sigma}\left(\partial_{{d-1}}^{*}f\right)\left(v\sigma\right)=\sum_{v\sim\sigma}\sum_{i=0}^{{d}}\left(-1\right)^{i}f\left(v\sigma\backslash\left(v\sigma\right)_{i}\right)\nonumber \\
 & =\sum_{v\sim\sigma}f\left(\sigma\right)-\sum_{i=0}^{{d-1}}\left(-1\right)^{i}f\left(v\sigma\backslash\sigma_{i}\right)\nonumber \\
 & =\deg\left(\sigma\right)f\left(\sigma\right)-\sum_{v\sim\sigma}\sum_{i=0}^{{d-1}}\left(-1\right)^{i}f\left(v\sigma\backslash\sigma_{i}\right),\label{eq:upper-laplacian}
\end{align}
where we recall that $\deg\left(\sigma\right)$ is the number of $d$-cells
containing $\sigma$. Let us introduce the following notation: for
$\sigma,\sigma'\in X_{\pm}^{d-1}$ we denote $\sigma'\sim\sigma$
if there exists an oriented $d$-cell $\tau$ such that both $\sigma$
and $\overline{\sigma'}$ are in the boundary of $\tau$ (as oriented
cells). Using this notation we can express $\Delta^{+}$ more elegantly
as 
\begin{equation}
\left(\Delta^{+}f\right)\left(\sigma\right)=\deg\left(\sigma\right)f\left(\sigma\right)-\sum_{\sigma'\sim\sigma}f\left(\sigma'\right).\label{eq:upper-lap-neighb}
\end{equation}
For the lower Laplacian we have

\begin{equation}
\left(\Delta^{-}f\right)\left(\sigma\right)=\sum_{i=0}^{{d-1}}\left(-1\right)^{i}\left(\partial_{{d-1}}f\right)\left(\sigma\backslash\sigma_{i}\right)=\sum_{i=0}^{{d-1}}\left(-1\right)^{i}\sum_{v\sim\sigma\backslash\sigma_{i}}f\left(v\sigma\backslash\sigma_{i}\right).\label{eq:lower-laplacian}
\end{equation}

The following straightforward claim bounds the spectrum of the upper
Laplacian:
\begin{claim}
The spectrum of $\Delta^{+}$ is contained in the interval $\left[0,\left(d+1\right)k\right]$,
where $k$ is the maximal degree in $X$.\hfill{}\qed
\end{claim}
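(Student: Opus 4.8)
The plan is to read off both bounds directly from the factorization $\Delta^{+}=\partial_{d}\partial_{d}^{*}$ together with the explicit formula for $\partial_{d}^{*}$. Since $\Delta^{+}$ is self-adjoint, its spectrum is real, and for every $f\in\Omega^{d-1}$ one has $\langle\Delta^{+}f,f\rangle=\langle\partial_{d}\partial_{d}^{*}f,f\rangle=\langle\partial_{d}^{*}f,\partial_{d}^{*}f\rangle=\|\partial_{d}^{*}f\|^{2}\geq0$; hence $\min\Spec\Delta^{+}\geq0$. By the variational (Rayleigh quotient) characterization of the extreme eigenvalues of a self-adjoint operator, it therefore suffices to prove $\langle\Delta^{+}f,f\rangle\leq(d+1)k\,\|f\|^{2}$ for all $f\in\Omega^{d-1}$.

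First I would expand $\|\partial_{d}^{*}f\|^{2}$ using the inner product \eqref{eq:inner-prod} and the formula $(\partial_{d}^{*}f)(\tau)=\sum_{i=0}^{d}(-1)^{i}f(\tau\backslash\tau_{i})$ for $\tau\in X^{d}$, obtaining $\langle\Delta^{+}f,f\rangle=\sum_{\tau\in X^{d}}\bigl(\sum_{i=0}^{d}(-1)^{i}f(\tau\backslash\tau_{i})\bigr)^{2}$. Next, for each fixed $\tau$ the inner sum has $d+1$ terms, so Cauchy--Schwarz gives $\bigl(\sum_{i=0}^{d}(-1)^{i}f(\tau\backslash\tau_{i})\bigr)^{2}\leq(d+1)\sum_{i=0}^{d}f(\tau\backslash\tau_{i})^{2}$, where each $f(\tau\backslash\tau_{i})^{2}$ is independent of a choice of orientation, so this is well defined.

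Then I would interchange the order of summation. Each pair $(\tau,i)$ with $\tau\in X^{d}$ and $0\leq i\leq d$ corresponds bijectively to a pair consisting of a $(d-1)$-cell $\sigma=\tau\backslash\tau_{i}$ together with a $d$-cell $\tau$ containing it, and for a fixed $\sigma$ the number of such $\tau$ is exactly $\deg(\sigma)$. Hence $\sum_{\tau\in X^{d}}\sum_{i=0}^{d}f(\tau\backslash\tau_{i})^{2}=\sum_{\sigma\in X^{d-1}}\deg(\sigma)\,f(\sigma)^{2}\leq k\sum_{\sigma\in X^{d-1}}f(\sigma)^{2}=k\|f\|^{2}$, using $\deg(\sigma)\leq k$. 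Combining the last three displays yields $\langle\Delta^{+}f,f\rangle\leq(d+1)k\|f\|^{2}$, which together with the nonnegativity above gives $\Spec\Delta^{+}\subseteq[0,(d+1)k]$.

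There is essentially no obstacle here: the only point requiring a moment's care is the incidence count in the double sum, i.e.\ that summing the squared values of $f$ over all (facet, cofacet) pairs weights each $(d-1)$-cell by its degree, plus the harmless remark that every quantity appearing is orientation-independent. An alternative route --- bounding the diagonal and off-diagonal row sums of \eqref{eq:upper-lap-neighb} via Gershgorin's theorem, using $\deg(\sigma)\leq k$ and the fact that each of the $\deg(\sigma)$ cofacets of $\sigma$ contributes $d$ neighbours --- also gives the upper endpoint $(d+1)k$, but it still needs the nonnegativity of $\Delta^{+}$ to recover the sharp lower endpoint $0$, so the Cauchy--Schwarz computation above is the cleaner one.
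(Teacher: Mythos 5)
Your argument is correct: nonnegativity from $\left\langle \Delta^{+}f,f\right\rangle =\left\Vert \partial_{d}^{*}f\right\Vert ^{2}$, Cauchy--Schwarz on the $d+1$ terms of $\left(\partial_{d}^{*}f\right)\left(\tau\right)$, and the incidence count $\sum_{\tau\in X^{d}}\sum_{i=0}^{d}f\left(\tau\backslash\tau_{i}\right)^{2}=\sum_{\sigma\in X^{d-1}}\deg\left(\sigma\right)f\left(\sigma\right)^{2}$ are all sound. The paper itself offers no proof of this claim --- it is stated as ``straightforward'' and closed with \qedsymbol{} immediately --- so there is nothing to compare against; your write-up (or the Gershgorin variant you sketch, combined with positive semidefiniteness) is exactly the kind of routine verification the authors left to the reader.
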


\subsection{Complexes with a complete skeleton}

Complexes with a complete skeleton appear to be particularly well
behaved, in comparison with the general case. The following proposition
lists some observations regarding their Laplacians. These will be
used in the proofs of the main theorems, and also to obtain simpler
characterizations of the spectral gap in this case.
\begin{prop}
\label{prop:full-skel-prop}If $X$ has a complete skeleton, then
\begin{enumerate}
\item If $\overline{X}$ is the complement complex of $X$, i.e., $\overline{X}^{{d-1}}=X^{{d-1}}={V \choose d}$%
\footnote{\ ${V \choose j}$ denotes the set of subsets of $V$ of size $j$.%
} and $\overline{X}^{{d}}={V \choose d+1}\backslash X^{{d}}$, then
\begin{equation}
\Delta_{\overline{X}}^{+}=n\cdot I-\Delta_{X}.\label{eq:laplacian-complement}
\end{equation}

\item The spectrum of $\Delta$ lies in the interval $\left[0,n\right]$.
\item The lower Laplacian of $X$ satisfies
\begin{equation}
\Delta^{-}=n\cdot\mathbb{P}_{B^{{d-1}}}\label{eq:lower-laplace-proj}
\end{equation}
where $\mathbb{P}_{B^{{d-1}}}$ is the orthogonal projection onto
$B^{{d-1}}$.
\end{enumerate}
\end{prop}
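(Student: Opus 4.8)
The plan is to derive all three items from the explicit formulas \eqref{eq:upper-laplacian} and \eqref{eq:lower-laplacian}, exploiting the fact that $\Delta^{-}=\partial_{d-1}^{*}\partial_{d-1}$ depends only on the $(d-1)$-skeleton, and hence is literally the same operator for $X$, for the complement complex $\overline{X}$, and for the complete $d$-complex on $V$. Since $X$ and $\overline{X}$ share the complete $(d-1)$-skeleton ${V\choose d}$, they share $\Omega^{d-1}$, $\partial_{d-1}$ and $\Delta^{-}$.

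I would first record two pointwise identities, valid for $\sigma\in{V\choose d}$ and $f\in\Omega^{d-1}$. By \eqref{eq:upper-laplacian}, for a complex $Y$ with complete $(d-1)$-skeleton, $(\Delta_{Y}^{+}f)(\sigma)=\deg_{Y}(\sigma)\,f(\sigma)-c_{\sigma}(f;Y)$, where $c_{\sigma}(f;Y)=\sum_{v\,:\,v\sim_{Y}\sigma}\sum_{i=0}^{d-1}(-1)^{i}f\big(v(\sigma\setminus\sigma_{i})\big)$. By \eqref{eq:lower-laplacian}, the inner sum there ranges over $v\notin\sigma\setminus\{\sigma_{i}\}$, i.e.\ over $v=\sigma_{i}$ together with all $v\notin\sigma$; isolating the term $v=\sigma_{i}$, for which $\sigma_{i}(\sigma\setminus\sigma_{i})=[\sigma_{i},\sigma_{0},\ldots,\sigma_{i-1},\sigma_{i+1},\ldots,\sigma_{d-1}]=(-1)^{i}\sigma$, so that after the outer sign this term contributes $(-1)^{i}\cdot(-1)^{i}f(\sigma)=f(\sigma)$ to the $i$-th summand, gives $(\Delta^{-}f)(\sigma)=d\cdot f(\sigma)+\sum_{v\notin\sigma}\sum_{i=0}^{d-1}(-1)^{i}f\big(v(\sigma\setminus\sigma_{i})\big)$. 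The last double sum equals $c_{\sigma}(f;X)+c_{\sigma}(f;\overline{X})$, since each $v\notin\sigma$ has $v\sigma\in X^{d}$ or $v\sigma\in\overline{X}^{d}$ but not both, and similarly $\deg_{X}(\sigma)+\deg_{\overline{X}}(\sigma)=n-d$. Adding the three operators, the off-diagonal terms cancel and $\Delta_{X}^{+}+\Delta_{\overline{X}}^{+}+\Delta^{-}=n\cdot I$ (equivalently: the full Laplacian of the complete $d$-complex on $V$ is $n\cdot I$). Since $\Delta_{X}=\Delta_{X}^{+}+\Delta^{-}$, this rearranges to $\Delta_{\overline{X}}^{+}=n\cdot I-\Delta_{X}$, which is item (1).

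Item (2) is then immediate: $\Delta_{X}=\partial_{d}\partial_{d}^{*}+\partial_{d-1}^{*}\partial_{d-1}$ is a sum of positive semidefinite operators, so $\Spec\Delta_{X}\subseteq[0,\infty)$, while $n\cdot I-\Delta_{X}=\Delta_{\overline{X}}^{+}$ is again of the form $\partial\partial^{*}$ and hence positive semidefinite, so $\Spec\Delta_{X}\subseteq(-\infty,n]$. For item (3), the chain relation $\partial_{d-1}\partial_{d}=0$ gives, on taking adjoints, $\partial_{d}^{*}\partial_{d-1}^{*}=0$, whence $B^{d-1}=\im\partial_{d-1}^{*}\subseteq\ker\partial_{d}^{*}=Z^{d-1}$, and the same inclusion holds in $\overline{X}$. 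Hence for $f\in B^{d-1}$ both $\Delta_{X}^{+}f=\partial_{d}(\partial_{d}^{*}f)=0$ and $\Delta_{\overline{X}}^{+}f=0$, so item (1) forces $\Delta^{-}f=nf$. On the other hand $\Delta^{-}$ vanishes on $Z_{d-1}=\ker\Delta^{-}=(B^{d-1})^{\bot}$, so by the orthogonal decomposition $\Omega^{d-1}=B^{d-1}\oplus Z_{d-1}$ the operator $\Delta^{-}$ is multiplication by $n$ on $B^{d-1}$ and by $0$ on $Z_{d-1}$; that is, $\Delta^{-}=n\cdot\mathbb{P}_{B^{d-1}}$.

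The single delicate point is the bookkeeping of orientations and signs in the two pointwise identities — in particular pinning down the $v=\sigma_{i}$ contribution to $\Delta^{-}$, which is the origin of the diagonal term $d\cdot f(\sigma)$; this $d$, together with the $n-d$ produced by $\deg(\sigma)$ in the complete complex, is exactly what sums to $n$. Once these identities are verified, the rest is routine finite-dimensional linear algebra.
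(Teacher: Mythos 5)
Your proposal is correct and follows essentially the same route as the paper: rewrite $\Delta^{-}$ pointwise using completeness to extract the diagonal term $d\cdot f(\sigma)$, sum $\Delta_{X}^{+}+\Delta_{\overline{X}}^{+}+\Delta^{-}$ to get $n\cdot I$, deduce (2) from positive semidefiniteness of both sides of \eqref{eq:laplacian-complement}, and deduce (3) from $B^{d-1}\subseteq\ker\Delta_{X}^{+}\cap\ker\Delta_{\overline{X}}^{+}$ together with (1). The sign bookkeeping for the $v=\sigma_{i}$ terms is handled correctly, so no gaps.
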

\begin{proof}
By the completeness of the skeleton, the lower Laplacian (see \eqref{eq:lower-laplacian})
can be written as 
\begin{align*}
\left(\Delta^{-}f\right)\left(\sigma\right) & =\sum_{i=0}^{{d-1}}\left(-1\right)^{i}\sum_{v\sim\sigma\backslash\sigma_{i}}f\left(v\sigma\backslash\sigma_{i}\right)=\sum_{i=0}^{{d-1}}\left(-1\right)^{i}\sum_{v\notin\sigma\backslash\sigma_{i}}f\left(v\sigma\backslash\sigma_{i}\right)\\
 & =d\cdot f\left(\sigma\right)+\sum_{v\notin\sigma}\sum_{i=0}^{{d-1}}\left(-1\right)^{i}f\left(v\sigma\backslash\sigma_{i}\right).
\end{align*}
To show $\mathit{\left(1\right)}$ we observe that $v\sim\sigma$
in $\overline{X}$ iff $v\notin\sigma$ and $v\nsim\sigma$ (in $X$),
so that 
\begin{align*}
\left(\Delta_{X}f+\Delta_{\overline{X}}^{+}f\right)\left(\sigma\right)= & \left(\Delta_{X}^{-}f\right)\left(\sigma\right)+\left(\Delta_{X}^{+}f\right)\left(\sigma\right)+\left(\Delta_{\overline{X}}^{+}f\right)\left(\sigma\right)\\
= & \, d\cdot f\left(\sigma\right)+\sum_{v\notin\sigma}\sum_{i=0}^{{d-1}}\left(-1\right)^{i}f\left(v\sigma\backslash\sigma_{i}\right)\\
 & +\deg\left(\sigma\right)f\left(\sigma\right)-\sum_{v\sim\sigma}\sum_{i=0}^{{d-1}}\left(-1\right)^{i}f\left(v\sigma\backslash\sigma_{i}\right)\\
 & +\left(n-d-\deg\left(\sigma\right)\right)f\left(\sigma\right)-\sum_{{v\notin\sigma\atop v\nsim\sigma}}\sum_{i=0}^{{d-1}}\left(-1\right)^{i}f\left(v\sigma\backslash\sigma_{i}\right)=nf\left(\sigma\right).
\end{align*}
From $\mathit{\left(1\right)}$ we conclude that $\Spec\Delta_{\overline{X}}^{+}=\left\{ n-\gamma\,\big|\,\gamma\in\Spec\Delta_{X}\right\} $,
and since $\Delta_{X}$ and $\Delta_{\overline{X}}^{+}$ are positive
semidefinite, $\mathit{\left(2\right)}$ follows. To establish $\mathit{\left(3\right)}$,
recall that $\left(B^{{d-1}}\right)^{\bot}=Z_{d-1}=\ker\Delta^{-},$
and it is left to show that $\Delta^{-}f=nf$ for $f\in B^{{d-1}}$.
Note that $B^{d-1}\subseteq Z^{d-1}=\ker\Delta_{X}^{+}$, and in addition,
that since $B^{{d-1}}$ only depends on $X$'s $\left(d-1\right)$-skeleton,
\[
B^{{d-1}}\left(X\right)=B^{{d-1}}\left(\overline{X}\right)\subseteq Z^{{d-1}}\left(\overline{X}\right)=\ker\Delta_{\overline{X}}^{+}.
\]
Now from $\mathit{\left(1\right)}$ it follows that for $f\in B^{d-1}$
\[
\Delta_{X}^{-}f=\Delta_{X}^{-}f+\Delta_{X}^{+}f=\Delta_{X}f=nf-\Delta_{\overline{X}}^{+}f=nf
\]
as desired.
\end{proof}
The next proposition offers alternative characterizations of the spectral
gap:
\begin{prop}
\label{prop:spectral-gap-complete}If $X$ has a complete skeleton,
then
\begin{enumerate}
\item \textup{The spectral gap of $X$ is obtained by
\begin{equation}
\lambda\left(X\right)=\min\Spec\,\Delta.\label{eq:spectral_gap_for_full_skel}
\end{equation}
}
\item \emph{Furthermore, it is the ${n-1 \choose d-1}+1$ smallest eigenvalue
of $\Delta^{+}$.}
\end{enumerate}
\end{prop}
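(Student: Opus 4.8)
Both parts rest on the orthogonal decomposition $\Omega^{d-1}=B^{d-1}\oplus Z_{d-1}$, with respect to which all three Laplacians split.

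For part (1), I would note that on the summand $B^{d-1}$ we have $\Delta^{+}\equiv 0$, since $B^{d-1}\subseteq Z^{d-1}=\ker\Delta^{+}$, while $\Delta^{-}=n\cdot\mathbb{P}_{B^{d-1}}$ by Proposition \ref{prop:full-skel-prop}(3); hence $\Delta|_{B^{d-1}}=n\cdot I$. On the complementary summand $Z_{d-1}=\ker\Delta^{-}$ we have $\Delta|_{Z_{d-1}}=\Delta^{+}|_{Z_{d-1}}$, whose smallest eigenvalue is $\lambda(X)$ by Definition \ref{def:The-spectral-gap}. Therefore $\min\Spec\Delta=\min\{\,n,\ \lambda(X)\,\}$, and since $\lambda(X)$ is itself an eigenvalue of $\Delta$ and $\Spec\Delta\subseteq[0,n]$ by Proposition \ref{prop:full-skel-prop}(2), we get $\lambda(X)\leq n$ and so $\min\Spec\Delta=\lambda(X)$, establishing \eqref{eq:spectral_gap_for_full_skel}.

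For part (2), the same splitting shows that $\Delta^{+}$ has exactly $\dim B^{d-1}$ ``trivial'' zero eigenvalues coming from $B^{d-1}$, after which its spectrum coincides with $\Spec(\Delta^{+}|_{Z_{d-1}})$, whose minimum is $\lambda(X)$ (this is exactly what is argued in the proof of Proposition \ref{prop:gap-alternat}); thus $\lambda(X)=\lambda_{\dim B^{d-1}}$ is the $(\dim B^{d-1}+1)$-st smallest eigenvalue of $\Delta^{+}$. It then remains to compute that $\dim B^{d-1}=\binom{n-1}{d-1}$ when $X$ has a complete skeleton. The one point that needs care is that one should \emph{not} evaluate $\dim B^{d-1}$ directly in $X$ via $\dim B^{d-1}=(|X^{d-1}|-\beta_{d-1})-(|X^{d}|-\beta_{d})$, because the right-hand side still secretly depends on the $d$-cells of $X$; instead, since $B^{d-1}=\im\partial_{d-1}^{*}$ depends only on the $(d-1)$-skeleton of $X$ — which, by completeness, is the full $(d-1)$-skeleton of the simplex on $V$ — one may compute $\dim B^{d-1}$ inside the full $n$-vertex simplex $Y$. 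In $Y$ every reduced Betti number vanishes, so the identity $\dim B^{j-1}=|X^{j-1}|-\dim B^{j}-\beta_{j-1}$ from the proof of Proposition \ref{prop:gap-alternat} becomes $\dim B^{j-1}(Y)=\binom{n}{j}-\dim B^{j}(Y)$; since $\Omega^{n}(Y)=0$ we have $\dim B^{n}(Y)=0=\binom{n-1}{n}$, and a downward induction using Pascal's rule $\binom{n}{j}=\binom{n-1}{j-1}+\binom{n-1}{j}$ gives $\dim B^{j-1}(Y)=\binom{n-1}{j-1}$ for all $j$. Taking $j=d$ yields $\dim B^{d-1}=\binom{n-1}{d-1}$ and finishes the proof.

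Apart from that bookkeeping subtlety there is no real obstacle: the argument is finite-dimensional linear algebra together with one elementary binomial induction, and all of its ingredients — the Hodge-type splitting, Proposition \ref{prop:full-skel-prop}, and the rank computation in the proof of Proposition \ref{prop:gap-alternat} — are already in place.
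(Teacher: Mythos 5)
Your proposal is correct and takes essentially the same route as the paper: part (1) is the identical splitting $\Spec\Delta=\Spec\Delta^{-}\big|_{B^{d-1}}\cup\Spec\Delta^{+}\big|_{Z_{d-1}}=\left\{ n\right\} \cup\Spec\Delta^{+}\big|_{Z_{d-1}}$ combined with $\Spec\Delta\subseteq\left[0,n\right]$ from Proposition \ref{prop:full-skel-prop}, and part (2) reduces, exactly as in the paper, to showing $\dim B^{d-1}={n-1 \choose d-1}$ via Proposition \ref{prop:gap-alternat}. The only difference is cosmetic: the paper eliminates the dependence on $\left|X^{d}\right|$ and $\beta_{d}$ by adding the Euler-characteristic identity and evaluating the resulting alternating sum $\sum_{i=-1}^{d-2}\left(-1\right)^{d+i}{n \choose i+1}$, whereas you pass to the complete simplex and unroll the same recursion $\dim B^{j-1}=\left|X^{j-1}\right|-\dim B^{j}-\beta_{j-1}$ downward with Pascal's rule --- the same induction in different clothing, and your observation that $B^{d-1}$ depends only on the $\left(d-1\right)$-skeleton is sound.
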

$ $\begin{rems*}$ $
\begin{enumerate}
\item For graphs \eqref{eq:spectral_gap_for_full_skel} gives $\lambda\left(G\right)=\min\Spec\left(\Delta^{+}+J\right)$,
where $J=\Delta^{-}=\left(\begin{smallmatrix}1 & 1 & \cdots & 1\\
1 & 1 & \cdots & 1\\
\vdots & \vdots & \ddots & \vdots\\
1 & 1 & \cdots & 1
\end{smallmatrix}\right)$.
\item In general \eqref{eq:spectral_gap_for_full_skel} does not hold: for
example, for the triangle complex $\blacktriangleright\!\blacktriangleleft$,
$\lambda=\min\Spec\left(\Delta\big|_{Z_{{d-1}}}\right)=3$ but $\min\Spec\,\Delta=1$.
\end{enumerate}
\end{rems*}
\begin{proof}
$ $
\begin{enumerate}
\item First, since $\Delta$ decomposes w.r.t.\ $\Omega^{d-1}=B^{{d-1}}\oplus Z_{{d-1}}$
we have 
\[
\Spec\,\Delta=\Spec\Delta\big|_{B^{{d-1}}}\,\cup\,\Spec\Delta\big|_{Z_{{d-1}}}=\Spec\Delta^{-}\big|_{B^{{d-1}}}\,\cup\,\Spec\Delta^{+}\big|_{Z_{{d-1}}}.
\]
By Proposition \ref{prop:full-skel-prop}, $\Spec\Delta^{-}\big|_{B^{{d-1}}}=\left\{ n\right\} $
and $\Spec\,\Delta\subseteq\left[0,n\right]$, which implies that
\[
\lambda=\min\Spec\left(\Delta^{+}\big|_{Z_{{d-1}}}\right)=\min\Spec\,\Delta.
\]

\item The Euler characteristic satisfies $\sum_{i=-1}^{d}\left(-1\right)^{i}\left|X^{i}\right|=\chi\left(X\right)=\sum_{i=-1}^{d}\left(-1\right)^{i}\beta_{i}$.
Therefore, by Proposition \ref{prop:gap-alternat} we have $\lambda=\lambda_{r}$,
with 
\begin{align*}
r & =\left(\left|X^{d-1}\right|-\beta_{d-1}\right)-\left(\left|X^{d}\right|-\beta_{d}\right)\\
 & =\left(\left|X^{d-1}\right|-\beta_{d-1}\right)-\left(\left|X^{d}\right|-\beta_{d}\right)+\left(-1\right)^{d}\sum_{i=-1}^{d}\left(-1\right)^{i}\left(\left|X^{i}\right|-\beta_{i}\right)\\
 & =\sum_{i=-1}^{d-2}\left(-1\right)^{d+i}\left(\left|X^{i}\right|-\beta_{i}\right).
\end{align*}
Since the $\left(d-1\right)$-skeleton is complete, $\left|X^{i}\right|={n \choose i+1}$
and $\beta_{i}=0$ for $0\leq i\leq d-2$, and so 
\[
r=\sum_{i=-1}^{d-2}\left(-1\right)^{d+i}{n \choose i+1}={n-1 \choose d-1}.
\]
\vspace{-33pt}

\end{enumerate}
\end{proof}
We finish with a note on the density of $d$-cells in $X$:
\begin{prop}
\label{prop:density-degree-spec}Let $\delta$ denote the $d$-cell
density of $X$, $\delta=\frac{\left|X^{d}\right|}{{n \choose d+1}}$,
let $k$ denote the average degree of a $\left(d-1\right)$-cell,
and let $\lambda_{avg}$ denote the average over the spectrum of $\Delta^{+}\big|_{Z_{d-1}}$.
Then
\[
\delta=\frac{\lambda_{avg}}{n}=\frac{k}{n-d}.
\]
\end{prop}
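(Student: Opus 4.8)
The plan is to reduce the whole statement to the single elementary identity $\left(n-d\right){n \choose d}=n{n-1 \choose d}=\left(d+1\right){n \choose d+1}$ by writing each of $\delta$, $k$ and $\lambda_{avg}$ explicitly as a multiple of $\left|X^{d}\right|$, using that a complete skeleton forces $\left|X^{d-1}\right|={n \choose d}$. First I would handle $k$: counting incident pairs $\left(\sigma,\tau\right)$ with $\sigma\in X^{d-1}$, $\tau\in X^{d}$ and $\sigma\subset\tau$ in two ways --- once as $\sum_{\sigma\in X^{d-1}}\deg\left(\sigma\right)$, and once as $\left(d+1\right)\left|X^{d}\right|$ since each $\tau$ has exactly $d+1$ faces of dimension $d-1$ --- gives $\sum_{\sigma\in X^{d-1}}\deg\left(\sigma\right)=\left(d+1\right)\left|X^{d}\right|$, hence $k=\frac{\left(d+1\right)\left|X^{d}\right|}{{n \choose d}}$. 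Dividing by $n-d$ and applying the identity turns this into $\frac{k}{n-d}=\frac{\left|X^{d}\right|}{{n \choose d+1}}=\delta$.

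Next I would compute $\lambda_{avg}$, interpreted as $\tr\left(\Delta^{+}\big|_{Z_{d-1}}\right)$ divided by $\dim Z_{d-1}$ (the average of the eigenvalues counted with multiplicity). Since $\Delta^{+}$ respects the orthogonal splitting $\Omega^{d-1}=B^{d-1}\oplus Z_{d-1}$ and vanishes on $B^{d-1}$ (as $B^{d-1}\subseteq Z^{d-1}=\ker\Delta^{+}$), its trace on $Z_{d-1}$ equals its trace on all of $\Omega^{d-1}$. In the standard orthonormal basis of $\Omega^{d-1}$, formula \eqref{eq:upper-laplacian} shows the diagonal entry at $\sigma$ is $\deg\left(\sigma\right)$, because every cell $v\sigma\backslash\sigma_{i}$ in the subtracted sum contains the vertex $v\notin\sigma$ and hence differs from $\sigma$; thus $\tr\Delta^{+}=\sum_{\sigma\in X^{d-1}}\deg\left(\sigma\right)=\left(d+1\right)\left|X^{d}\right|$. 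For the denominator, $\dim Z_{d-1}=\left|X^{d-1}\right|-\dim B^{d-1}={n \choose d}-{n-1 \choose d-1}={n-1 \choose d}$, using the value $\dim B^{d-1}={n-1 \choose d-1}$ obtained in the proof of Proposition \ref{prop:spectral-gap-complete}. Hence $\lambda_{avg}=\frac{\left(d+1\right)\left|X^{d}\right|}{{n-1 \choose d}}$, and dividing by $n$ and applying the identity once more gives $\frac{\lambda_{avg}}{n}=\frac{\left|X^{d}\right|}{{n \choose d+1}}=\delta$.

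I do not expect any genuine obstacle; this is essentially a bookkeeping computation. The only points that deserve a word of care are that $\Delta^{+}$ contributes nothing off the diagonal to its own trace --- immediate since the cells $v\sigma\backslash\sigma_{i}$ appearing in \eqref{eq:upper-laplacian} all contain a vertex outside $\sigma$ --- and the reuse of $\dim B^{d-1}={n-1 \choose d-1}$ from Proposition \ref{prop:spectral-gap-complete}; everything else follows by expanding factorials in the identity $\left(n-d\right){n \choose d}=n{n-1 \choose d}=\left(d+1\right){n \choose d+1}$.
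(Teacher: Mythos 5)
Your proof is correct and follows essentially the same route as the paper's: both arguments rest on the double count $\sum_{\sigma}\deg\sigma=\left(d+1\right)\left|X^{d}\right|$, the identity $\tr\Delta^{+}=\sum_{\sigma}\deg\sigma$ together with the vanishing of $\Delta^{+}$ on $B^{d-1}$, and the dimension count $\dim Z_{d-1}={n \choose d}-{n-1 \choose d-1}={n-1 \choose d}$ from Proposition \ref{prop:spectral-gap-complete}. Your explicit justification that the diagonal entries of $\Delta^{+}$ are $\deg\left(\sigma\right)$ is a detail the paper leaves implicit, but otherwise the two computations differ only in bookkeeping (you express everything through $\left|X^{d}\right|$ where the paper passes through $k$).
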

\begin{proof}
On the one hand
\[
\delta=\frac{\left|X^{d}\right|}{{n \choose d+1}}=\frac{\left|X^{d-1}\right|\frac{k}{d+1}}{{n \choose d+1}}=\frac{{n \choose d}\frac{k}{d+1}}{{n \choose d+1}}=\frac{k}{n-d}.
\]
On the other,
\[
{n \choose d}k=\left|X^{d-1}\right|k=\sum_{\sigma\in X^{d-1}}\deg\sigma=\tr\Delta^{+}=\sum_{\lambda\in\Spec\Delta^{+}}\lambda=\negthickspace\sum_{\lambda\in\Spec\Delta^{+}|_{Z_{d-1}}}\negthickspace\negthickspace\lambda
\]
and by Proposition \ref{prop:spectral-gap-complete} 
\[
\lambda_{avg}=\frac{1}{{n \choose d}-{n-1 \choose d-1}}\sum_{\lambda\in\Spec\Delta^{+}|_{Z_{d-1}}}\negthickspace\lambda=\frac{1}{{n-1 \choose d}}\sum_{\lambda\in\Spec\Delta^{+}|_{Z_{d-1}}}\negthickspace\lambda=\frac{n}{n-d}\cdot k.
\]
\vspace{-25pt}

\end{proof}

\section{Proofs of the main theorems}

\subsection{A Cheeger-type inequality}

This section is devoted to the proof of Theorem \ref{thm:Isoperimetric_inequality}:
For a complex with a complete skeleton, the Cheeger constant is bounded
from below by the spectral gap.
\begin{proof}[Proof of Theorem \ref{thm:Isoperimetric_inequality}]
Recall that we seek to show 
\[
\min\Spec\left(\Delta^{+}\big|_{Z_{{d-1}}}\right)=\lambda\left(X\right)\leq h\left(X\right)=\min\limits _{V=\coprod_{i=0}^{d}A_{i}}\frac{n\cdot\left|F\left(A_{0},A_{1},\ldots,A_{d}\right)\right|}{\left|A_{0}\right|\cdot\left|A_{1}\right|\cdot\ldots\cdot\left|A_{d}\right|}.
\]
Let $A_{0},\ldots,A_{{d}}$ be a partition of $V$ which realizes
the minimum in $h$. We define $f\in\Omega^{d-1}$ by 
\begin{equation}
f\left(\left[\sigma_{0}\:\sigma_{1}\:\ldots\:\sigma_{{d-1}}\right]\right)=\begin{cases}
\sgn\left(\pi\right)\left|A_{\pi\left({d}\right)}\right| & \exists\pi\in\mathrm{Sym}_{\left\{ 0\ldots{d}\right\} }\:\mathrm{with}\:\sigma_{i}\in A_{\pi\left(i\right)}\:\mathrm{for}\:0\leq i\leq{d-1}\\
0 & \mathrm{else,\: i.e.\:}\exists k,i\neq j\:\mathrm{with}\:\sigma_{i},\sigma_{j}\in A_{k}.
\end{cases}\label{eq:cheeger_to_lambda_f}
\end{equation}
Note that $f\left(\pi'\sigma\right)=\sgn\left(\pi'\right)f\left(\sigma\right)$
for any $\pi'\in\mathrm{Sym}_{\left\{ 0\ldots d-1\right\} }$ and
$\sigma\in X^{d-1}$. Therefore, $f$ is a well-defined skew-symmetric
function on oriented $\left(d-1\right)$-cells, i.e., $f\in\Omega^{d-1}$.
Figure \ref{fig:The-function-f} illustrates $f$ for $d=1,2$.

\begin{figure}[h]
\begin{centering}
\includegraphics[width=0.7\columnwidth]{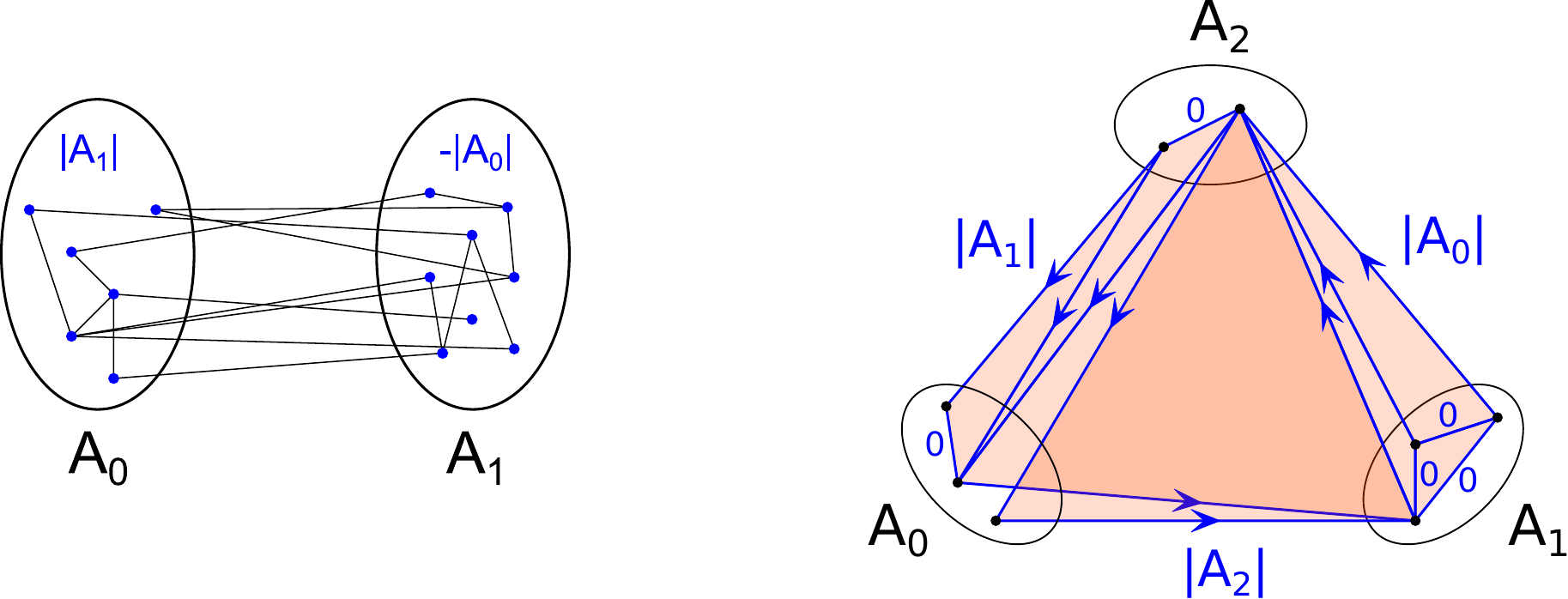}
\par\end{centering}

\caption{\label{fig:The-function-f}The form $f\in\Omega^{d-1}$ defined in
\eqref{eq:cheeger_to_lambda_f}, for complexes of dimensions one and
two.}
\end{figure}

We proceed to show that $f\in Z_{{d-1}}$. Let $\sigma=\left[\sigma_{0},\sigma_{1},\ldots,\sigma_{d-2}\right]\in X_{\pm}^{d-2}$.
As we assumed that $X^{{d-1}}$ is complete, 
\[
\left(\partial_{{d-1}}f\right)\left(\sigma\right)=\sum_{v\sim\sigma}f\left(\left[v,\sigma_{0},\sigma_{1},\ldots,\sigma_{{d-2}}\right]\right)=\sum_{v\notin\sigma}f\left(\left[v,\sigma_{0},\sigma_{1},\ldots,\sigma_{{d-2}}\right]\right).
\]
If for some $k$ and $i\neq j$ we have $\sigma_{i},\sigma_{j}\in A_{k}$,
this sum vanishes. On the other hand, if there exists $\pi\in\mathrm{Sym}_{\left\{ 0\ldots{d}\right\} }$
such that $\sigma_{i}\in A_{\pi\left(i\right)}$ for $0\leq i\leq d-2$
then 
\begin{align*}
\left(\partial_{d-1}f\right)\left(\sigma\right) & =\sum_{v\in A_{\pi\left(d-1\right)}}f\left(\left[v,\sigma_{0},\sigma_{1},\ldots,\sigma_{{d-2}}\right]\right)+\sum_{v\in A_{\pi\left(d\right)}}f\left(\left[v,\sigma_{0},\sigma_{1},\ldots,\sigma_{{d-2}}\right]\right)\\
 & =\sum_{v\in A_{\pi\left(d-1\right)}}\left(-1\right)^{d-1}\sgn\pi\left|A_{\pi\left(d\right)}\right|+\sum_{v\in A_{{d}}}\left(-1\right)^{d}\sgn\pi\left|A_{\pi\left(d-1\right)}\right|\\
 & =\left(-1\right)^{d-1}\sgn\pi\left(\left|A_{\pi\left(d-1\right)}\right|\left|A_{\pi\left(d\right)}\right|-\left|A_{\pi\left(d\right)}\right|\left|A_{\pi\left(d-1\right)}\right|\right)=0
\end{align*}
and in both cases $f\in Z_{{d-1}}$. Thus, by Rayleigh's principle
\begin{equation}
\lambda\left(X\right)=\min\Spec\left(\Delta^{+}\big|_{Z_{{d-1}}}\right)\leq\frac{\left\langle \Delta^{+}f,f\right\rangle }{\left\langle f,f\right\rangle }=\frac{\left\langle \partial_{{d}}^{*}f,\partial_{{d}}^{*}f\right\rangle }{\left\langle f,f\right\rangle }.\label{eq:Rayleigh}
\end{equation}
The denominator is 
\[
\left\langle f,f\right\rangle =\sum_{\sigma\in X^{d-1}}f\left(\sigma\right)^{2},
\]
and a $\left({d-1}\right)$-cell $\sigma$ contributes to this sum
only if its vertices are in different blocks of the partition, i.e.,
there are no $k$ and $i\neq j$ with $\sigma_{i},\sigma_{j}\in A_{k}$.
In this case, there exists a unique block, $A_{i}$, which does not
contain a vertex of $\sigma$, and $\sigma$ contributes $\left|A_{i}\right|^{2}$
to the sum. Since $X^{{d-1}}$ is complete, there are $\left|A_{0}\right|\cdot\ldots\cdot\left|A_{i-1}\right|\cdot\left|A_{i+1}\right|\cdot\ldots\cdot\left|A_{{d}}\right|$
non-oriented $\left({d-1}\right)$-cells whose vertices are in distinct
blocks and which do not intersect $A_{i}$, hence
\[
\left\langle f,f\right\rangle =\sum_{i=0}^{{d}}\left(\prod_{j\neq i}\left|A_{j}\right|\right)\left|A_{i}\right|^{2}=n\prod_{i=0}^{{d}}\left|A_{i}\right|.
\]
To evaluate the numerator in \eqref{eq:Rayleigh}, we first show that
for $\sigma\in X^{d}$ 
\begin{equation}
\left|\left(\partial_{{d}}^{*}f\right)\left(\sigma\right)\right|=\begin{cases}
n & \sigma\in F\left(A_{0},\ldots,A_{{d}}\right)\\
0 & \sigma\notin F\left(A_{0},\ldots,A_{{d}}\right)
\end{cases}.\label{eq:upper_d_on_f}
\end{equation}
First, let $\sigma\notin F\left(A_{0},\ldots,A_{{d}}\right)$. If
$\sigma$ has three vertices from the same $A_{i}$, or two pairs
of vertices from the same blocks (i.e.\ $\sigma_{i},\sigma_{j}\in A_{k}$
and $\sigma_{i'},\sigma_{j'}\in A_{k'}$), then for every summand
in 
\[
\left(\partial_{{d}}^{*}f\right)\left(\sigma\right)=\sum_{i=0}^{{d}}\left(-1\right)^{i}f\left(\sigma\backslash\sigma_{i}\right),
\]
the cell $\sigma\backslash\sigma_{i}$ has two vertices from the same
block, and therefore $\left(\partial_{{d}}^{*}f\right)\left(\sigma\right)=0$.
Next, assume that $\sigma_{j}$ and $\sigma_{k}$ (with $j<k$) is
the only pair of vertices in $\sigma$ which belong to the same block.
The only non-vanishing terms in $\left(\partial_{{d}}^{*}f\right)\left(\sigma\right)=\sum_{i=0}^{{d}}\left(-1\right)^{i}f\left(\sigma\backslash\sigma_{i}\right)$
are $i=j$ and $i=k$, i.e., 
\[
\left(\partial_{{d}}^{*}f\right)\left(\sigma\right)=\left(-1\right)^{j}f\left(\sigma\backslash\sigma_{j}\right)+\left(-1\right)^{k}f\left(\sigma\backslash\sigma_{k}\right).
\]
Since the value of $f$ on a simplex depends only on the blocks to
which its vertices belong, 
\begin{align*}
f\left(\sigma\backslash\sigma_{j}\right) & =f\left(\left[\sigma_{0}\:\sigma_{1}\:\ldots\:\sigma_{j-1}\:\sigma_{j+1}\:\ldots\sigma_{k-1}\:\sigma_{k}\:\sigma_{k+1}\ldots\:\sigma_{{d}}\right]\right)\\
 & =f\left(\left[\sigma_{0}\:\sigma_{1}\:\ldots\:\sigma_{j-1}\:\sigma_{j+1}\:\ldots\sigma_{k-1}\:\sigma_{j}\:\sigma_{k+1}\ldots\:\sigma_{{d}}\right]\right)\\
 & =f\left(\left(-1\right)^{k-j+1}\left[\sigma_{0}\:\sigma_{1}\:\ldots\:\sigma_{j-1}\:\sigma_{j}\:\sigma_{j+1}\:\ldots\:\sigma_{k-1}\:\sigma_{k+1}\ldots\:\sigma_{{d}}\right]\right)\\
 & =\left(-1\right)^{k-j+1}f\left(\sigma\backslash\sigma_{k}\right),
\end{align*}
so that
\[
\left(\partial_{{d}}^{*}f\right)\left(\sigma\right)=\left(-1\right)^{j}\left(-1\right)^{k-j+1}f\left(\sigma\backslash\sigma_{k}\right)+\left(-1\right)^{k}f\left(\sigma\backslash\sigma_{k}\right)=0.
\]
The remaining case is $\sigma\in F\left(A_{0},\ldots,A_{{d}}\right)$.
Here, there exists $\pi\in\mathrm{Sym}_{\left\{ 0\ldots{d}\right\} }$
with $\sigma_{i}\in A_{\pi\left(i\right)}$ for $0\leq i\leq{d}$.
Observe that 
\[
f\left(\sigma\backslash\sigma_{i}\right)=\sgn\left(\pi\cdot\left(d\;{d\negmedspace-\negmedspace1}\; d\negmedspace-\negmedspace2\:\ldots\: i\right)\right)\left|A_{\pi\left(i\right)}\right|=\left(-1\right)^{{d}-i}\sgn\pi\left|A_{\pi\left(i\right)}\right|
\]
and therefore
\[
\left(\partial_{{d}}^{*}f\right)\left(\sigma\right)=\sum_{i=0}^{{d}}\left(-1\right)^{i}f\left(\sigma\backslash\sigma_{i}\right)=\left(-1\right)^{{d}}\sgn\pi\sum_{i=0}^{{d}}\left|A_{\pi\left(i\right)}\right|=\left(-1\right)^{{d}}\sgn\pi n.
\]
Therefore, $\left|\left(\partial_{{d}}^{*}f\right)\left(\sigma\right)\right|=n$.
This establishes \eqref{eq:upper_d_on_f}, which implies that 
\[
\left\langle \partial_{{d}}^{*}f,\partial_{{d}}^{*}f\right\rangle =\sum_{\sigma\in X^{d}}\left|\left(\partial_{{d}}^{*}f\right)\left(\sigma\right)\right|^{2}=n^{2}\left|F\left(A_{0},\ldots,A_{{d}}\right)\right|
\]
and in total
\[
\lambda\left(X\right)\leq\frac{\left\langle \partial_{{d}}^{*}f,\partial_{{d}}^{*}f\right\rangle }{\left\langle f,f\right\rangle }=\frac{n\left|F\left(A_{0},\ldots,A_{{d}}\right)\right|}{\prod_{i=0}^{{d}}\left|A_{i}\right|}=h\left(X\right).
\]

\end{proof}

\subsection{\label{sub:Towards-a-lower}Towards a lower Cheeger inequality}

The first observation to be made regarding a lower Cheeger inequality,
is that no bound of the form $C\cdot h\left(X\right)^{m}\leq\lambda\left(X\right)$
can be found. Had such a bound existed, one would have that $\lambda\left(X\right)=0$
implies $h\left(X\right)=0$, but a counterexample to this is provided
by the minimal triangulation of the Möbius strip (Figure \ref{fig:Mobius}).

\begin{figure}[H]
\centering{}\includegraphics[scale=0.65]{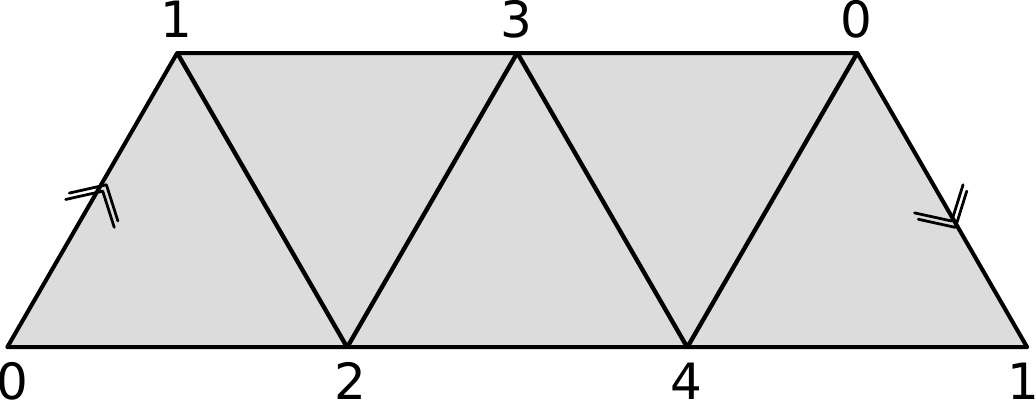}\caption{\label{fig:Mobius}A triangulation of the Möbius strip for which $h\left(X\right)=1\frac{1}{4}$
but $\lambda\left(X\right)=0$.}
\end{figure}

Nevertheless, numerical experiments hint that a bound of the form
$C\cdot h\left(X\right)^{2}-c\leq\lambda\left(X\right)$ should hold,
where $C$ and $c$ depend on the dimension and the maximal degree
of a $\left(d-1\right)$-cell in $X$.

An attempt towards an upper bound for the Cheeger constant can be
made by connecting it to ``local Cheeger constants'', as follows.
For every $\tau\in X^{d-2}$ we consider the \emph{link }of $\tau$
(see Figure \ref{fig:link}), 
\[
\lk\tau=\left\{ \sigma\in X\,\middle|\,\sigma\cap\tau=\varnothing\;\mathrm{and}\;\sigma\cup\tau\in X\right\} .
\]
\begin{figure}[h]
\begin{centering}
\includegraphics[scale=1.5]{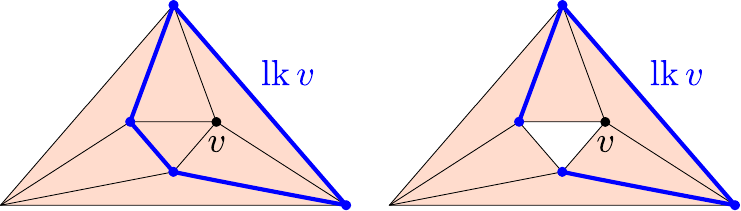}
\par\end{centering}

\caption{\label{fig:link}Two examples for the link of a vertex in a triangle
complex.}
\end{figure}

Since $\dim\tau=d-2$, $\lk\tau$ is a graph, and there is a $1-1$
correspondence between vertices (edges) of $\lk\tau$ and $\left(d-1\right)$-cells
($d$-cells) of $X$ which contain $\tau$. We have the following
bound for the Cheeger constant of $X$:
\begin{prop}
The bound $h\left(X\right)\leq\frac{h\left(\lk\tau\right)}{1-\frac{d-1}{n}}$
holds for any $d$-complex $X$ and $\tau\in X^{d-2}$.\end{prop}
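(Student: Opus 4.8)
The plan is to reduce the Cheeger constant of $X$ to that of a single link $\lk\tau$ by choosing a partition of $V(X)$ that restricts nicely to $\lk\tau$. Fix $\tau=\{t_0,\ldots,t_{d-2}\}\in X^{d-2}$, and let $W=V\setminus\tau$ be the vertex set of the graph $\lk\tau$ (it has $n-(d-1)$ vertices). Let $B_0,B_1$ be the partition of $W$ realizing $h(\lk\tau)$, so that, writing $N=|W|=n-d+1$,
\[
h\bigl(\lk\tau\bigr)=\frac{N\cdot\bigl|E_{\lk\tau}(B_0,B_1)\bigr|}{|B_0|\cdot|B_1|}.
\]
I would then build a partition $A_0,\ldots,A_d$ of $V$ by setting $A_i=\{t_i\}$ for $0\le i\le d-2$, $A_{d-1}=B_0$ and $A_d=B_1$. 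Each $A_i$ is nonempty, and this is a legitimate partition of all of $V$.

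The key step is to identify $F(A_0,\ldots,A_d)$ for this partition. A $d$-cell $\sigma$ lies in $F(A_0,\ldots,A_d)$ iff it has exactly one vertex in each $A_i$; since $A_0,\ldots,A_{d-2}$ are the singletons $\{t_i\}$, this forces $\tau\subseteq\sigma$, and the remaining two vertices of $\sigma$ must lie one in $B_0$ and one in $B_1$. By the correspondence between $d$-cells of $X$ containing $\tau$ and edges of $\lk\tau$, such $\sigma$ correspond precisely to edges of $\lk\tau$ with one endpoint in $B_0$ and one in $B_1$. Hence
\[
\bigl|F(A_0,\ldots,A_d)\bigr|=\bigl|E_{\lk\tau}(B_0,B_1)\bigr|,
\]
while $|A_0|\cdots|A_d|=1\cdots1\cdot|B_0|\cdot|B_1|=|B_0|\,|B_1|$. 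Plugging into the definition of $h(X)$ as a minimum over all partitions gives
\[
h(X)\le\frac{n\cdot\bigl|F(A_0,\ldots,A_d)\bigr|}{|A_0|\cdots|A_d|}
=\frac{n}{N}\cdot\frac{N\cdot\bigl|E_{\lk\tau}(B_0,B_1)\bigr|}{|B_0|\,|B_1|}
=\frac{n}{n-d+1}\,h(\lk\tau)=\frac{h(\lk\tau)}{1-\frac{d-1}{n}},
\]
which is exactly the claimed bound.

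The only subtlety — and the single point I would double-check carefully — is the bijection between edges of $E_{\lk\tau}(B_0,B_1)$ and cells in $F(A_0,\ldots,A_d)$: one must verify that a $d$-cell $\sigma$ that meets each singleton $\{t_i\}$ once automatically contains all of $\tau$ and so actually descends to a vertex/edge of $\lk\tau$, and conversely that every edge of $\lk\tau$ joining $B_0$ to $B_1$ comes from a genuine $d$-cell of $X$ (this is immediate from the definition $\lk\tau=\{\sigma: \sigma\cap\tau=\varnothing,\ \sigma\cup\tau\in X\}$ and the stated correspondence between edges of $\lk\tau$ and $d$-cells of $X$ containing $\tau$). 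Note this argument needs \emph{no} completeness assumption on the skeleton. Everything else is bookkeeping; there is no real obstacle beyond getting this identification and the counting $|A_0|\cdots|A_d|=|B_0||B_1|$ right.
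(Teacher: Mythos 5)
Your construction and the counting are right, and in the case where the $(d-1)$-skeleton of $X$ is complete your argument is essentially the paper's computation (there the paper's correction factor is an exact equality, and $\frac{n}{n-d+1}=\frac{1}{1-\frac{d-1}{n}}$ gives precisely the stated bound). But the one point you flag as needing no completeness assumption is exactly where the argument has a gap. The vertex set of the graph $\lk\tau$ is $\left(\lk\tau\right)^{0}=\left\{ v\notin\tau\,\middle|\,\left\{ v\right\} \cup\tau\in X\right\} $, which for a general $d$-complex can be a \emph{proper} subset of $W=V\backslash\tau$. Consequently: (i) the normalizing factor in $h\left(\lk\tau\right)$ is $\left|\left(\lk\tau\right)^{0}\right|$, not $N=n-d+1$, so your displayed formula for $h\left(\lk\tau\right)$ is not its definition; and (ii) a partition $B_{0}\sqcup B_{1}$ of $\left(\lk\tau\right)^{0}$ realizing the minimum does not yield a partition of $V$ via $A_{d-1}=B_{0}$, $A_{d}=B_{1}$, because the vertices of $V\backslash\tau$ outside the link are left over.

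The paper handles this by setting $A_{d}=V\backslash\bigcup_{i=0}^{d-1}A_{i}$ (so the non-link vertices are absorbed into $A_{d}$, which does not change $F\left(A_{0},\ldots,A_{d}\right)$ since no $d$-cell containing $\tau$ meets them), and then proving the correction estimate $\frac{\left|\left(\lk\tau\right)^{0}\right|\left|A_{d}\right|}{n\left|C_{0}\right|}\geq1-\frac{d-1}{n}$, which is the actual source of the factor in the statement. Your proof can be repaired either by following that route, or more cheaply by splitting into cases: if the skeleton is not complete then $h\left(X\right)=0$ (as remarked after Theorem \ref{thm:Isoperimetric_inequality}) and the bound is trivial, while if it is complete then $\left(\lk\tau\right)^{0}=V\backslash\tau$ and your argument goes through verbatim. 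As written, however, the claim that no completeness is needed is incorrect and the general case is not covered.
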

\begin{proof}
Write $\tau=\left[\tau_{0},\tau_{1},\ldots,\tau_{d-2}\right]$ and
denote $A_{i}=\left\{ \tau_{i}\right\} $ for $0\leq i\leq d-2$.
Due to the correspondence between $\left(\lk\tau\right)^{j}$ and
cells in $X^{d-1+j}$ containing $\tau$, 
\[
h\left(\lk\tau\right)\overset{{\scriptscriptstyle def}}{=}\min_{B\coprod C=\left(\lk\tau\right)^{0}}\frac{\left|E_{\lk\tau}\left(B,C\right)\right|\cdot\left|\left(\lk\tau\right)^{0}\right|}{\left|B\right|\cdot\left|C\right|}=\min_{B\coprod C=\left(\lk\tau\right)^{0}}\frac{\left|F\left(A_{0},\ldots,A_{d-2},B,C\right)\right|\cdot\left|\left(\lk\tau\right)^{0}\right|}{\left|B\right|\cdot\left|C\right|}.
\]
Assume that the minimum is attained by $B=B_{0}$ and $C=C_{0}$.
We define 
\[
A_{d-1}=B_{0},\qquad A_{d}=V\backslash\left(\bigcup_{i=0}^{d-1}A_{i}\right).
\]
Now $A_{0},\ldots,A_{d}$ is a partition of $V$, and 
\[
F\left(A_{0},\ldots,A_{d-2},B_{0},C_{0}\right)=F\left(A_{0},\ldots,A_{d-2},A_{d-1},A_{d}\right)
\]
since no $d$-cell containing $\tau$ has a vertex in $A_{d}\backslash C_{0}$.
In addition, 
\begin{align*}
\frac{\left|\left(\lk\tau\right)^{0}\right|\left|A_{d}\right|}{n\left|C_{0}\right|} & \geq\frac{\left|\left(\lk\tau\right)^{0}\right|\left|A_{d}\right|-\left|A_{d-1}\right|\left(\left|A_{d}\right|-\left|C_{0}\right|\right)}{n\left|C_{0}\right|}\\
 & =\frac{\left[n-\left(d-1\right)-\left(\left|A_{d}\right|-\left|C_{0}\right|\right)\right]\left|A_{d}\right|-\left|A_{d-1}\right|\left(\left|A_{d}\right|-\left|C_{0}\right|\right)}{n\left|C_{0}\right|}\\
 & =\frac{\left(n-\left(d-1\right)\right)\left|A_{d}\right|-\left(\left|A_{d-1}\right|+\left|A_{d}\right|\right)\left(\left|A_{d}\right|-\left|C_{0}\right|\right)}{n\left|C_{0}\right|}\\
 & =\frac{\left(n-\left(d-1\right)\right)\left[\left|A_{d}\right|-\left(\left|A_{d}\right|-\left|C_{0}\right|\right)\right]}{n\left|C_{0}\right|}=1-\frac{d-1}{n},
\end{align*}
which implies 
\begin{align*}
h\left(\lk\tau\right) & =\frac{F\left(A_{0},\ldots,A_{d-2},A_{d-1},A_{d}\right)\left|\left(\lk\tau\right)^{0}\right|}{\left|B_{0}\right|\cdot\left|C_{0}\right|}\\
 & =\frac{F\left(A_{0},\ldots,A_{d-2},A_{d-1},A_{d}\right)n}{\left|A_{0}\right|\cdot\ldots\cdot\left|A_{d}\right|}\cdot\frac{\left|\left(\lk\tau\right)^{0}\right|\left|A_{d}\right|}{n\left|C_{0}\right|}\\
 & \geq h\left(X\right)\cdot\frac{\left|\left(\lk\tau\right)^{0}\right|\left|A_{d}\right|}{n\left|C_{0}\right|}\geq\left(1-\frac{d-1}{n}\right)h\left(X\right).
\end{align*}
\vspace{-25pt}

\end{proof}
Since $\lk\tau$ is a graph, its Cheeger constant can be bounded using
the lower inequality in \eqref{eq:cheeger-graph}. We also note that
the degree of a vertex in $\lk\tau$ corresponds to the degree of
a $\left(d-1\right)$-cell in $X$, and therefore 
\begin{equation}
\frac{\left(1-\frac{d-1}{n}\right)^{2}}{8k}h^{2}\left(X\right)\leq\frac{h\left(\lk\tau\right)^{2}}{8k}\leq\frac{h\left(\lk\tau\right)^{2}}{8k_{\tau}}\leq\lambda\left(\lk\tau\right)\label{eq:from_h_to_lambda_lk}
\end{equation}
where $k$ is the maximal degree of a $\left(d-1\right)$-cell in
$X$, and $k_{\tau}$ of a vertex in $\lk\tau$.

We now see that a bound of the spectral gap of links by that of the
complex would yield a lower Cheeger inequality. Such a bound was indeed
discovered by Garland in \cite{Gar73}, and was studied further by
several authors \cite{Zuk96,ABM05,GW12}. The following lemma appears
in \cite{GW12}, for a normalized version of the Laplacian. We give
here, without proof, its form for the Laplacian we use.
\begin{lem}[\cite{Gar73,GW12}]
\label{lem:link-laplacian}Let $X$ be a $d$-dimensional simplicial
complex. Given $f\in\Omega^{d-1},\sigma\in X^{d-1},\tau\in X^{d-2}$
define a function $f_{\tau}:\left(\lk\tau\right)^{0}\rightarrow\mathbb{R}$
by $f_{\tau}\left(v\right)=f\left(v\tau\right)$, and an operator
$\Delta_{\tau}^{+}:\Omega^{d-1}\left(X\right)\rightarrow\Omega^{d-1}\left(X\right)$
by
\[
\left(\Delta_{\tau}^{+}f\right)\left(\sigma\right)=\begin{cases}
\deg_{\tau}\left(\sigma\right)f\left(\sigma\right)-\smash{\sum\limits _{{\sigma'\sim\sigma\atop \tau\subseteq\sigma'}}}f\left(\sigma'\right)\vphantom{\underset{|}{A}} & \quad\tau\subset\sigma\\
0 & \quad\tau\nsubseteq\sigma
\end{cases}
\]
where $\deg_{\tau}\left(\sigma\right)=\#\left\{ \sigma'\sim\sigma\,\middle|\,\tau\subseteq\sigma'\right\} =\deg_{\lk\tau}\left(\sigma\backslash\tau\right)$.
The following then hold:
\begin{enumerate}
\item $\Delta^{+}=\left(\sum_{\tau\in X^{d-2}}\Delta_{\tau}^{+}\right)-\left(d-1\right)D$,
where \textup{$\left(Df\right)\left(\sigma\right)=\deg\left(\sigma\right)f\left(\sigma\right)$.}
\item $\left\langle \Delta_{\tau}^{+}f,f\right\rangle =\left\langle \Delta_{\lk\tau}^{+}f_{\tau},f_{\tau}\right\rangle $.
\item If $f\in Z_{d-1}$ then $f_{\tau}\in Z_{0}\left(\lk\tau\right)$.
\item $\sum_{\tau\in X^{d-2}}\left\langle f_{\tau},f_{\tau}\right\rangle =d\left\langle f,f\right\rangle $.
\end{enumerate}
\end{lem}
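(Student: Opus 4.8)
The plan is to prove each of the four items by direct, essentially formulaic computation, relying on the completeness-free definitions of $\Delta^+$, $\partial$, $\partial^*$ and on the dictionary between cells of $X$ containing a fixed $\tau\in X^{d-2}$ and cells of $\lk\tau$. Throughout I will write a $(d-1)$-cell containing $\tau$ as $v\tau$ for the appropriate vertex $v\sim_X\tau$, which matches a vertex $v$ of $\lk\tau$; likewise a $d$-cell containing $\tau$ corresponds to an edge $\{u,v\}$ of $\lk\tau$. A small sign bookkeeping point I will fix once at the start: the identification $\sigma=v\tau\leftrightarrow v$ and $\tau'=u v\tau\leftrightarrow [u,v]$ is orientation-compatible, so that the induced orientation of $\sigma$ in $\partial\tau'$ corresponds to the induced orientation of $v$ in $\partial_{\lk\tau}[u,v]$. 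Once this is set up the rest is mechanical.

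For item (2), I would expand $\langle\Delta_\tau^+ f,f\rangle=\sum_{\sigma\supseteq\tau}\bigl(\deg_\tau(\sigma)f(\sigma)^2-\sum_{\sigma'\sim\sigma,\,\sigma'\supseteq\tau}f(\sigma')f(\sigma)\bigr)$, rewrite each $\sigma=v\tau$, $\sigma'=w\tau$, use $f(v\tau)=f_\tau(v)$ and $\deg_\tau(v\tau)=\deg_{\lk\tau}(v)$, and observe that this is exactly the quadratic form $\sum_{v}\deg_{\lk\tau}(v)f_\tau(v)^2-\sum_{w\sim_{\lk\tau}v}f_\tau(w)f_\tau(v)=\langle\Delta^+_{\lk\tau}f_\tau,f_\tau\rangle$ from \eqref{eq:upper-lap-neighb}. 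For item (4), $\sum_{\tau\in X^{d-2}}\langle f_\tau,f_\tau\rangle=\sum_\tau\sum_{v\in(\lk\tau)^0}f(v\tau)^2=\sum_{\sigma\in X^{d-1}}\#\{\tau\in X^{d-2}:\tau\subset\sigma\}\,f(\sigma)^2$; since each $(d-1)$-cell has exactly $d$ codimension-one faces, this is $d\langle f,f\rangle$. For item (3), if $f\in Z_{d-1}=\ker\partial_{d-1}$ then for every $\rho\in X^{d-2}$, $0=(\partial_{d-1}f)(\rho)=\sum_{v\sim_X\rho}f(v\rho)$; I apply this with $\rho=\tau$ to get $\sum_{v\in(\lk\tau)^0}f_\tau(v)=0$, which says precisely $f_\tau\in Z_0(\lk\tau)$ (the functions summing to zero, per the Example in Section~\ref{sec:Notations-and-definitions}).

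Item (1) is the one requiring genuine care, and I expect it to be the main obstacle — not because it is deep but because the sign- and degree-accounting is delicate. Starting from \eqref{eq:upper-laplacian}, write $(\Delta^+ f)(\sigma)=\deg(\sigma)f(\sigma)-\sum_{v\sim\sigma}\sum_{i=0}^{d-1}(-1)^i f(v\sigma\backslash\sigma_i)$; the cross term $v\sigma\backslash\sigma_i$ is a $(d-1)$-cell containing the $(d-2)$-face $\sigma\backslash\sigma_i=:\tau_i$. On the other side, $\sum_{\tau\in X^{d-2}}(\Delta^+_\tau f)(\sigma)$ only picks up those $\tau$ with $\tau\subset\sigma$, i.e.\ the $d$ faces $\tau_0,\dots,\tau_{d-1}$, and for each such $\tau_i$ contributes $\deg_{\tau_i}(\sigma)f(\sigma)-\sum_{\sigma'\sim\sigma,\,\tau_i\subset\sigma'}f(\sigma')$. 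I would then check (i) the diagonal coefficients match: $\sum_{i=0}^{d-1}\deg_{\tau_i}(\sigma)=$ (number of pairs (a $d$-cell $\supseteq\sigma$, one of its $(d-1)$-faces $\ne\sigma$ but $\supseteq$ some $\tau_i$)) $=d\cdot\deg(\sigma)$, since each $d$-cell containing $\sigma$ has $d$ faces other than $\sigma$, each of which contains exactly one of the $\tau_i$; this gives coefficient $d\cdot\deg(\sigma)$, and subtracting $(d-1)\deg(\sigma)$ from the $-(d-1)D$ term leaves $\deg(\sigma)$, as wanted; and (ii) the off-diagonal terms: every neighbour $\sigma'\sim\sigma$ shares a $(d-2)$-face with $\sigma$, and I must verify it is counted once and with the correct sign — here I will need that the sign in $\Delta^+_\tau$ (which is $-1$, built into $-\sum f(\sigma')$) reproduces the $-(-1)^i$ coefficient of $f(v\sigma\backslash\sigma_i)$ after re-expressing $v\sigma\backslash\sigma_i=v\tau_i$ in the standard orientation, using skew-symmetry of $f$. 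The bulk of the write-up is carrying out this last sign check cleanly; everything else reduces to the counting identity "each $d$-cell $\supseteq\sigma$ contributes $d$ faces, one per $\tau_i$."
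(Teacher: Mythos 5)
Your proposal is correct, but note that the paper itself gives no proof of this lemma: it is stated explicitly ``without proof'' and attributed to \cite{Gar73,GW12} (where it appears for the normalized Laplacian). So what you have written is a genuine supplement rather than a variant of the paper's argument. Your verification is the standard one, and the individual steps all check out: in (4) the factor $d$ is exactly the number of $(d-2)$-faces of a $(d-1)$-cell; in (3) the identity $\sum_{v\in(\lk\tau)^{0}}f_{\tau}(v)=(\partial_{d-1}f)(\tau)=0$ is precisely membership in $Z_{0}(\lk\tau)$ as described in the paper's example; and in (2) the identification $v\tau\leftrightarrow v$, $uv\tau\leftrightarrow[u,v]$ is orientation-compatible (taking $T=[w,v,\tau_{0},\ldots,\tau_{d-2}]$, both $v\tau$ and $\overline{w\tau}$ lie in $\partial T$, so $w\tau\sim v\tau$), whence $(\Delta_{\tau}^{+}f)(v\tau)=(\Delta_{\lk\tau}^{+}f_{\tau})(v)$ and (2) follows. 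For (1), you can shorten the ``delicate sign check'' considerably by working from \eqref{eq:upper-lap-neighb} rather than \eqref{eq:upper-laplacian}: every oriented neighbour $\sigma'\sim\sigma$ shares exactly $d-1$ vertices with $\sigma$, so it satisfies $\tau\subseteq\sigma'$ for the unique $\tau=\sigma\cap\sigma'\in X^{d-2}$, and hence $\sum_{\tau\subset\sigma}\sum_{\sigma'\sim\sigma,\,\tau\subseteq\sigma'}f(\sigma')=\sum_{\sigma'\sim\sigma}f(\sigma')$ with no further sign analysis needed --- the signs are already absorbed into the relation $\sim$ on oriented cells. The same uniqueness gives $\sum_{\tau\subset\sigma}\deg_{\tau}(\sigma)=\#\{\sigma'\mid\sigma'\sim\sigma\}=d\deg(\sigma)$, and subtracting $(d-1)\deg(\sigma)f(\sigma)$ yields \eqref{eq:upper-lap-neighb} exactly. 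In short: the proof is right, and the only improvement I would suggest is to let the paper's oriented-neighbour formalism do the sign bookkeeping for you in item (1).
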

Assume now that $f\in Z_{d-1}$ is a normalized eigenfunction for
$\lambda\left(X\right)$, i.e.\ $\left\langle f,f\right\rangle =1$
and $\Delta^{+}f=\lambda\left(X\right)f$. Using the lemma we find
that 
\begin{multline*}
\lambda\left(X\right)=\left\langle \Delta^{+}f,f\right\rangle \overset{\mathit{\left(1\right)}}{=}\sum_{\tau\in X^{d-2}}\left\langle \Delta_{\tau}^{+}f,f\right\rangle -\left(d-1\right)\left\langle Df,f\right\rangle \overset{\mathit{\left(2\right)}}{=}\sum_{\tau\in X^{d-2}}\left\langle \Delta_{\lk\tau}^{+}f_{\tau},f_{\tau}\right\rangle -\left(d-1\right)\left\langle Df,f\right\rangle \\
\geq\sum_{\tau\in X^{d-2}}\left\langle \Delta_{\lk\tau}^{+}f_{\tau},f_{\tau}\right\rangle -\left(d-1\right)k\overset{\mathit{\left(3\right)}}{\geq}\sum_{\tau\in X^{d-2}}\lambda\left(\lk\tau\right)\left\langle f_{\tau},f_{\tau}\right\rangle -\left(d-1\right)k\overset{\mathit{\left(4\right)}}{=}d\min_{\tau\in X^{d-2}}\lambda\left(\lk\tau\right)-\left(d-1\right)k.
\end{multline*}
By \eqref{eq:from_h_to_lambda_lk} we obtain the bound
\[
\frac{d\left(1-\frac{d-1}{n}\right)^{2}}{8k}h^{2}\left(X\right)-\left(d-1\right)k\leq\lambda\left(X\right).
\]
Sadly, this bound is trivial, as it is not hard to show that the l.h.s.\ is
non-positive for every complex $X$. A possible line of research would
be to find a stronger relation between the spectral gap of the complex
and that of its links, for the case of complexes with a complete skeleton
(Garland's work applies to general ones).

\subsection{\label{sub:The-Mixing-Lemma}The Mixing Lemma}

Here we prove Theorem \ref{thm:mixing}. We begin by formulating it
precisely. 
\begin{thm*}[\ref{thm:mixing}]
Let $X$ be a $d$-dimensional complex with a complete skeleton.
Fix $\alpha\in\mathbb{R}$, and write $\Spec\left(\alpha I-\Delta^{+}\right)=\left\{ \mu_{0}\geq\mu_{1}\geq\ldots\geq\mu_{m}\right\} $
(where $m={n \choose d}-1$). For any disjoint sets of vertices $A_{0},\ldots,A_{d}$
(not necessarily a partition), one has 
\[
\left|\left|F\left(A_{0},\ldots,A_{d}\right)\right|-\frac{\alpha\cdot\left|A_{0}\right|\cdot\ldots\cdot\left|A_{d}\right|}{n}\right|\leq\rho_{\alpha}\cdot\left(\left|A_{0}\right|\cdot\ldots\cdot\left|A_{d}\right|\right)^{\frac{d}{d+1}}
\]
where 
\[
\rho_{\alpha}=\max\left\{ \big|\mu_{{n-1 \choose d-1}}\big|,\left|\mu_{m}\right|\right\} =\left\Vert \left(\alpha I-\Delta^{+}\right)\big|_{Z_{d-1}}\right\Vert .
\]
\end{thm*}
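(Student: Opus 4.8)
The plan is to generalise the standard proof of the graph Expander Mixing Lemma~\eqref{eq:mixing-graphs}, in which one writes $|E(A,B)|=\langle(\alpha I-\Delta^{+})\mathbf{1}_{A},\mathbf{1}_{B}\rangle$, splits $\mathbf{1}_{A}$ and $\mathbf{1}_{B}$ along $\Omega^{0}=B^{0}\oplus Z_{0}$, reads off the main term $\tfrac{\alpha|A||B|}{n}$ from the $B^{0}$-components, and bounds the error from the $Z_{0}$-components by $\rho_{\alpha}\|\mathbf{1}_{A}\|\,\|\mathbf{1}_{B}\|$. Since both $F(A_{0},\dots,A_{d})$ and $|A_{0}|\cdots|A_{d}|$ are symmetric in the $A_{i}$, I first relabel so that $A_{d}$ is a largest block and $A_{0}$ a second largest one (the case where some $A_{i}$ is empty being trivial, as both sides vanish). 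I then introduce two ``transversality cochains'' $f,g\in\Omega^{d-1}$: set $f([\sigma_{0},\dots,\sigma_{d-1}])=\sgn\pi$ if there is $\pi\in\mathrm{Sym}_{\{0,\dots,d-1\}}$ with $\sigma_{i}\in A_{\pi(i)}$ for all $i$, and $f=0$ otherwise; define $g$ in the same way but with $A_{1},\dots,A_{d}$ in place of $A_{0},\dots,A_{d-1}$. As in the proof of Theorem~\ref{thm:Isoperimetric_inequality}, permuting a cell's vertices changes the relevant permutation by the corresponding sign, so $f$ and $g$ are well-defined skew-symmetric cochains, and counting the cells in their supports gives $\langle f,f\rangle=\prod_{i\neq d}|A_{i}|$ and $\langle g,g\rangle=\prod_{i\neq 0}|A_{i}|$.

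The core of the argument is to evaluate $\langle(\alpha I-\Delta^{+})f,g\rangle$ in two ways. On one side, $\langle(\alpha I-\Delta^{+})f,g\rangle=\alpha\langle f,g\rangle-\langle\partial_{d}^{*}f,\partial_{d}^{*}g\rangle$, and $\langle f,g\rangle=0$ because a single $(d-1)$-cell cannot meet both each of $A_{0},\dots,A_{d-1}$ and each of $A_{1},\dots,A_{d}$ in exactly one vertex. For $\sigma\in X^{d}$ the formula $(\partial_{d}^{*}f)(\sigma)=\sum_{i}(-1)^{i}f(\sigma\setminus\sigma_{i})$ shows that $(\partial_{d}^{*}f)(\sigma)\neq 0$ forces some $\sigma\setminus\sigma_{i}$ to be transversal to $A_{0},\dots,A_{d-1}$, and likewise for $g$; together these requirements force $\sigma\in F(A_{0},\dots,A_{d})$, and on such $\sigma$ the sign bookkeeping from the proof of Theorem~\ref{thm:Isoperimetric_inequality} gives $(\partial_{d}^{*}f)(\sigma)(\partial_{d}^{*}g)(\sigma)=(-1)^{d}$. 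Hence $\langle(\alpha I-\Delta^{+})f,g\rangle=(-1)^{d+1}|F(A_{0},\dots,A_{d})|$.

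On the other side, decompose $f=f_{B}+f_{Z}$ and $g=g_{B}+g_{Z}$ along $\Omega^{d-1}=B^{d-1}\oplus Z_{d-1}$. Since $\Delta^{+}$ respects this decomposition and vanishes on $B^{d-1}\subseteq Z^{d-1}$, the cross terms vanish and $\langle(\alpha I-\Delta^{+})f,g\rangle=\alpha\langle f_{B},g_{B}\rangle+\langle(\alpha I-\Delta^{+})f_{Z},g_{Z}\rangle$. By Proposition~\ref{prop:full-skel-prop}(3), $\mathbb{P}_{B^{d-1}}=\tfrac1n\Delta^{-}=\tfrac1n\partial_{d-1}^{*}\partial_{d-1}$, so $\langle f_{B},g_{B}\rangle=\tfrac1n\langle\partial_{d-1}f,\partial_{d-1}g\rangle$. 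Computing $\partial_{d-1}f$ and $\partial_{d-1}g$ on a $(d-2)$-cell $\rho$, both vanish unless $\rho$ is transversal to $A_{1},\dots,A_{d-1}$, in which case $(\partial_{d-1}f)(\rho)$ and $(\partial_{d-1}g)(\rho)$ have absolute values $|A_{0}|$ and $|A_{d}|$, with a sign whose product is $(-1)^{d-1}$; summing over the $\prod_{i=1}^{d-1}|A_{i}|$ such cells gives $\langle\partial_{d-1}f,\partial_{d-1}g\rangle=(-1)^{d-1}\prod_{i=0}^{d}|A_{i}|$, so $\alpha\langle f_{B},g_{B}\rangle=(-1)^{d+1}\tfrac{\alpha}{n}\prod_{i}|A_{i}|$.

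Equating the two evaluations and multiplying through by $(-1)^{d+1}$ yields
\[
|F(A_{0},\dots,A_{d})|=\frac{\alpha\,|A_{0}|\cdots|A_{d}|}{n}+(-1)^{d+1}\langle(\alpha I-\Delta^{+})f_{Z},g_{Z}\rangle .
\]
The error term is at most $\rho_{\alpha}\|f_{Z}\|\,\|g_{Z}\|\leq\rho_{\alpha}\|f\|\,\|g\|=\rho_{\alpha}\bigl(\textstyle\prod_{i}|A_{i}|\bigr)\big/\sqrt{|A_{0}|\,|A_{d}|}$, using $\rho_{\alpha}=\|(\alpha I-\Delta^{+})|_{Z_{d-1}}\|$ and the norm identities above. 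Because $A_{d}$ is a largest block and $A_{0}$ a second largest one, $\prod_{i}|A_{i}|\leq|A_{0}|^{d}|A_{d}|\leq(|A_{0}|\,|A_{d}|)^{(d+1)/2}$, equivalently $\sqrt{|A_{0}|\,|A_{d}|}\geq(\prod_{i}|A_{i}|)^{1/(d+1)}$, so the error is at most $\rho_{\alpha}(\prod_{i}|A_{i}|)^{d/(d+1)}$, which is the claimed bound. Finally, $\rho_{\alpha}=\max\{|\mu_{{n-1 \choose d-1}}|,|\mu_{m}|\}$ is immediate from Proposition~\ref{prop:spectral-gap-complete}, since $(\alpha I-\Delta^{+})|_{B^{d-1}}=\alpha I$ supplies exactly the top ${n-1 \choose d-1}$ of the decreasingly ordered $\mu_{j}$, and the rest, $\mu_{{n-1 \choose d-1}}\geq\dots\geq\mu_{m}$, form the spectrum of $(\alpha I-\Delta^{+})|_{Z_{d-1}}$. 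The main obstacle I anticipate is the sign and counting bookkeeping needed to show that the localised sums defining $\langle\partial_{d}^{*}f,\partial_{d}^{*}g\rangle$ and $\langle\partial_{d-1}f,\partial_{d-1}g\rangle$ are honest constant multiples of $|F(A_{0},\dots,A_{d})|$ and $\prod_{i}|A_{i}|$; the one genuinely new idea beyond the graph case is the elementary observation that routing the two largest blocks into the ``$A_{0}$'' and ``$A_{d}$'' slots is exactly what turns the crude bound $\|f\|\|g\|$ into the symmetric exponent $d/(d+1)$.
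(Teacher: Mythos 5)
Your proof is correct, and its skeleton coincides with the paper's: the same transversality cochains (the paper's $\delta_{A_{0},\ldots,A_{d-1}}$), the same evaluation of a bilinear form in $\alpha I-\Delta^{+}$, the same splitting along $\Omega^{d-1}=B^{d-1}\oplus Z_{d-1}$ using $\Delta^{-}=n\,\mathbb{P}_{B^{d-1}}$ from Proposition \ref{prop:full-skel-prop}, and the same identification of $\rho_{\alpha}$ via $\dim B^{d-1}={n-1\choose d-1}$. You diverge in two sub-steps, both legitimately. First, to extract the main term $\frac{\alpha}{n}\prod_{i}\left|A_{i}\right|$ the paper writes $\Delta^{-}=nI-\Delta_{X}^{+}-\Delta_{\overline{X}}^{+}$ and uses $\left|F_{X}\right|+\left|F_{\overline{X}}\right|=\prod_{i}\left|A_{i}\right|$, whereas you compute $\left\langle \partial_{d-1}f,\partial_{d-1}g\right\rangle $ directly on $\left(d-2\right)$-cells transversal to $A_{1},\ldots,A_{d-1}$; your sign constants $\left(-1\right)^{d}$ and $\left(-1\right)^{d-1}$ check out (e.g.\ by testing on a cell labelled by the identity permutation), and your route avoids introducing the complement complex at the cost of one more round of sign bookkeeping. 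Second, to pass from the asymmetric bound $\rho_{\alpha}\prod_{i}\left|A_{i}\right|/\sqrt{\left|A_{0}\right|\left|A_{d}\right|}$ to the exponent $\frac{d}{d+1}$, the paper takes the geometric mean of the bound over all $\left(d+1\right)!$ role-assignments, while you relabel once so that $A_{d}$ and $A_{0}$ are the two largest blocks and use $\prod_{i}\left|A_{i}\right|\leq\left(\left|A_{0}\right|\left|A_{d}\right|\right)^{\left(d+1\right)/2}$; this is a clean, equally sharp alternative (and your intermediate bound $\rho_{\alpha}\left\Vert f_{Z}\right\Vert \left\Vert g_{Z}\right\Vert $ is in fact the refinement the paper relegates to a closing remark). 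No gaps.
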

\begin{rem}
\label{rem:discrepancy}Which $\alpha$ should one take in practice?
In the introduction we state the theorem for $\alpha=k$, the average
degree of a $\left(d-1\right)$-cell, so that it generalize the familiar
form of the Expander Mixing Lemma for $k$-regular graphs. However,
the expectation of $\left|F\left(A_{0},\ldots,A_{d}\right)\right|$
in a random settings is actually $\delta\left|A_{0}\right|\cdot\ldots\cdot\left|A_{d}\right|$,
where $\delta$ is the $d$-cell density $\frac{\left|X^{d}\right|}{{n \choose d}}$.
Therefore, $\alpha=n\delta=\frac{nk}{n-d}$ is actually a more accurate
choice. This becomes even clearer upon observing that we seek to minimize
$\rho_{\alpha}=\left\Vert \left(\alpha I-\Delta^{+}\right)\big|_{Z_{d-1}}\right\Vert $,
since Proposition \ref{prop:density-degree-spec} shows that the spectrum
of $\Delta^{+}\big|_{Z_{d-1}}$ is centered around $\lambda_{avg}=n\delta=\frac{nk}{n-d}$.
While for a fixed $d$ the choice between $k$ and $\frac{nk}{n-d}$
is negligible, this should be taken into account when $d$ depends
on $n$.\end{rem}
\begin{proof}
For any disjoint sets of vertices $A_{0},\ldots,A_{d-1}$, define
$\delta_{A_{0},\ldots,A_{d-1}}\in\Omega^{d-1}$ by
\[
\delta_{A_{0},\ldots,A_{d-1}}\left(\sigma\right)=\begin{cases}
\sgn\left(\pi\right) & \exists\pi\in\mathrm{Sym}_{\left\{ 0\ldots d-1\right\} }\:\mathrm{with}\:\sigma_{i}\in A_{\pi\left(i\right)}\:\mathrm{for}\:0\leq i\leq{d-1}\\
0 & \mathrm{else}
\end{cases}.
\]
Since the skeleton of $X$ is complete, 
\begin{equation}
\left\Vert \delta_{A_{0},\ldots,A_{d-1}}\right\Vert =\sqrt{\sum_{\sigma\in X^{d-1}}\delta_{A_{0},\ldots,A_{d-1}}^{2}\left(\sigma\right)}=\sqrt{\left|A_{0}\right|\cdot\ldots\cdot\left|A_{d-1}\right|}.\label{eq:norm-of-charact}
\end{equation}
Now, let $A_{0},\ldots,A_{d}$ be disjoint subsets of $V$ (not necessarily
a partition), and denote
\begin{align*}
\varphi & =\delta_{A_{0},A_{1},A_{2},\ldots,A_{d-1}}\\
\psi & =\delta_{A_{d},A_{1},A_{2},\ldots,A_{d-1}}.
\end{align*}
Let $\sigma$ be an oriented $\left(d-1\right)$-cell with one vertex
in each of $A_{0},A_{1},\ldots,A_{d-1}$. We shall denote this by
$\sigma\in F\left(A_{0},\ldots,A_{d-1}\right)$, ignoring the orientation
of $\sigma$. There is a correspondence between $d$-cells in $F\left(A_{0},\ldots,A_{d}\right)$
containing $\sigma$, and neighbors of $\sigma$ which lie in $F\left(A_{d},A_{1},\ldots,A_{d-1}\right)$.
Furthermore, for such a neighbor $\sigma'$ we have $\varphi\left(\sigma\right)=\psi\left(\sigma'\right)$,
since $\sigma$ and $\sigma'$ must share the vertices which belong
to $A_{1},\ldots,A_{d-1}$. Therefore, if $\left(Df\right)\left(\sigma\right)=\deg\left(\sigma\right)f\left(\sigma\right)$
then by \eqref{eq:upper-lap-neighb} 
\begin{align}
\left\langle \varphi,\left(D-\Delta^{+}\right)\psi\right\rangle  & =\sum_{\sigma\in X^{d-1}}\varphi\left(\sigma\right)\left(\left(D-\Delta^{+}\right)\psi\right)\left(\sigma\right)=\sum_{\sigma\in X^{d-1}}\sum_{\sigma'\sim\sigma}\varphi\left(\sigma\right)\psi\left(\sigma'\right)\nonumber \\
 & =\sum_{\sigma\in F\left(A_{0}\ldots A_{d-1}\right)}\sum_{\sigma'\sim\sigma}\varphi\left(\sigma\right)\psi\left(\sigma'\right)=\sum_{\sigma\in F\left(A_{0}\ldots A_{d-1}\right)}\#\left\{ \sigma'\in F\left(A_{d},A_{1},\ldots,A_{d-1}\right)\,\middle|\,\sigma'\sim\sigma\right\} \nonumber \\
 & =\sum_{\sigma\in F\left(A_{0}\ldots A_{d-1}\right)}\#\left\{ \tau\in F\left(A_{0},A_{1},\ldots,A_{d}\right)\,\middle|\,\sigma\subseteq\tau\right\} =\left|F\left(A_{0},A_{1},\ldots,A_{d}\right)\right|.\label{eq:long_calc}
\end{align}
Notice that since the $A_{i}$ are disjoint, $\varphi$ and $\psi$
are supported on different $\left(d-1\right)$-cells, so that for
any $\alpha\in\mathbb{R}$ 
\begin{equation}
\left\langle \varphi,\left(D-\Delta^{+}\right)\psi\right\rangle =\left\langle \varphi,-\Delta^{+}\psi\right\rangle =\left\langle \varphi,\left(\alpha I-\Delta^{+}\right)\psi\right\rangle .\label{eq:disjoint-support}
\end{equation}
As $\Delta^{+}$ decomposes w.r.t.\ the orthogonal decomposition
$\Omega^{d-1}=B^{d-1}\oplus Z_{d-1}$, and since $B^{d-1}\subseteq Z^{d-1}=\ker\Delta^{+}$,
\begin{align}
\left|F\left(A_{0},A_{1},\ldots,A_{d}\right)\right| & =\left\langle \varphi,\left(\alpha I-\Delta^{+}\right)\psi\right\rangle \nonumber \\
 & =\left\langle \varphi,\left(\alpha I-\Delta^{+}\right)\left(\mathbb{P}_{B^{d-1}}\psi+\mathbb{P}_{Z_{d-1}}\psi\right)\right\rangle \nonumber \\
 & =\left\langle \varphi,\alpha\mathbb{P}_{B^{d-1}}\psi+\left(\alpha I-\Delta^{+}\right)\mathbb{P}_{Z_{d-1}}\psi\right\rangle \nonumber \\
 & =\alpha\left\langle \varphi,\mathbb{P}_{B^{d-1}}\psi\right\rangle +\left\langle \varphi,\left(\alpha I-\Delta^{+}\right)\mathbb{P}_{Z_{d-1}}\psi\right\rangle .\label{eq:F_by_B_and_Z}
\end{align}
We proceed to evaluate each of these terms separately. Using \eqref{eq:lower-laplace-proj}
and \eqref{eq:laplacian-complement} we find that 
\[
\alpha\left\langle \varphi,\mathbb{P}_{B^{d-1}}\psi\right\rangle =\frac{\alpha}{n}\left\langle \varphi,\Delta^{-}\psi\right\rangle =\frac{\alpha}{n}\left\langle \varphi,\left(nI-\Delta_{X}^{+}-\Delta_{\overline{X}}^{+}\right)\psi\right\rangle 
\]
and by \eqref{eq:long_calc} and \eqref{eq:disjoint-support} this
implies
\begin{align}
\alpha\left\langle \varphi,\mathbb{P}_{B^{d-1}}\psi\right\rangle  & =\frac{\alpha}{n}\left\langle \varphi,\left(nI-\Delta_{X}^{+}\right)\psi\right\rangle +\frac{\alpha}{n}\left\langle \varphi,-\Delta_{\overline{X}}^{+}\psi\right\rangle \nonumber \\
 & =\frac{\alpha}{n}\left|F_{X}\left(A_{0},A_{1},\ldots,A_{d}\right)\right|+\frac{\alpha}{n}\left|F_{\overline{X}}\left(A_{0},A_{1},\ldots,A_{d}\right)\right|\nonumber \\
 & =\frac{\alpha\cdot\left|A_{0}\right|\cdot\ldots\cdot\left|A_{d}\right|}{n}.\label{eq:P_B}
\end{align}
We turn to the second term in \eqref{eq:F_by_B_and_Z}. First, we
recall from Proposition \ref{prop:spectral-gap-complete} that $\dim B^{d-1}={n-1 \choose d-1}$.
Since $B^{d-1}\subseteq\ker\Delta^{+}$, we can assume that in $\Spec\left(\alpha I-\Delta^{+}\right)=\left\{ \mu_{0}\geq\mu_{1}\geq\ldots\geq\mu_{m}\right\} $
the first ${n-1 \choose d-1}$ values correspond to $B^{d-1}$, and
the rest to $\left(B^{d-1}\right)^{\bot}=Z_{d-1}$. Thus,
\begin{equation}
\rho_{\alpha}=\max\left\{ \big|\mu_{{n-1 \choose d-1}}\big|,\left|\mu_{m}\right|\right\} =\max\left\{ \left|\mu\right|\,\middle|\,\mu\in\Spec\left(\alpha I-\Delta^{+}\right)\big|_{Z_{d-1}}\right\} =\left\Vert \left(\alpha I-\Delta^{+}\right)\big|_{Z_{d-1}}\right\Vert ,\label{eq:rho_alpha_def}
\end{equation}
and therefore 
\begin{align}
\left|\left\langle \varphi,\left(\alpha I-\Delta^{+}\right)\mathbb{P}_{Z_{d-1}}\psi\right\rangle \right| & \leq\left\Vert \varphi\right\Vert \cdot\left\Vert \left(\alpha I-\Delta^{+}\right)\mathbb{P}_{Z_{d-1}}\psi\right\Vert \leq\left\Vert \varphi\right\Vert \cdot\left\Vert \left(\alpha I-\Delta^{+}\right)\big|_{Z_{d-1}}\right\Vert \cdot\left\Vert \mathbb{P}_{Z_{d-1}}\psi\right\Vert \nonumber \\
 & \leq\rho_{\alpha}\cdot\left\Vert \varphi\right\Vert \cdot\left\Vert \psi\right\Vert =\rho_{\alpha}\sqrt{\left|A_{0}\right|\left|A_{d}\right|}\left|A_{1}\right|\left|A_{2}\right|\ldots\left|A_{d-1}\right|,\label{eq:P_Z}
\end{align}
where the last step is by \eqref{eq:norm-of-charact}. Together \eqref{eq:F_by_B_and_Z},
\eqref{eq:P_B} and \eqref{eq:P_Z} give 
\[
\left|\left|F\left(A_{0},A_{1},\ldots,A_{d}\right)\right|-\frac{\alpha\cdot\left|A_{0}\right|\cdot\ldots\cdot\left|A_{d}\right|}{n}\right|\leq\rho_{\alpha}\sqrt{\left|A_{0}\right|\left|A_{d}\right|}\left|A_{1}\right|\left|A_{2}\right|\ldots\left|A_{d-1}\right|.
\]
Since $A_{0},\ldots,A_{d}$ play the same role, one can also obtain
the bound 
\[
\rho_{\alpha}\sqrt{\left|A_{\pi\left(0\right)}\right|\left|A_{\pi\left(d\right)}\right|}\left|A_{\pi\left(1\right)}\right|\left|A_{\pi\left(2\right)}\right|\ldots\left|A_{\pi\left(d-1\right)}\right|,
\]
for any $\pi\in\mathrm{Sym}_{\left\{ 0..d\right\} }$. Taking the
geometric mean over all such $\pi$ gives
\[
\left|\left|F\left(A_{0},A_{1},\ldots,A_{d}\right)\right|-\frac{\alpha\cdot\left|A_{0}\right|\cdot\ldots\cdot\left|A_{d}\right|}{n}\right|\leq\rho_{\alpha}\cdot\left(\left|A_{0}\right|\left|A_{1}\right|\ldots\left|A_{d}\right|\right)^{\frac{d}{d+1}}.
\]
\end{proof}
\begin{rem*}
The estimate \eqref{eq:P_Z} is somewhat wasteful. As is done in graphs,
a slightly better one is 
\begin{align*}
\left|\left\langle \varphi,\left(\alpha I-\Delta^{+}\right)\mathbb{P}_{Z_{d-1}}\psi\right\rangle \right| & =\left|\left\langle \mathbb{P}_{Z_{d-1}}\varphi,\left(\alpha I-\Delta^{+}\right)\mathbb{P}_{Z_{d-1}}\psi\right\rangle \right|\leq\rho_{\alpha}\cdot\left\Vert \mathbb{P}_{Z_{d-1}}\varphi\right\Vert \cdot\left\Vert \mathbb{P}_{Z_{d-1}}\psi\right\Vert ,
\end{align*}
and we leave it to the curious reader to verify that this gives
\[
\left|\left\langle \varphi,\left(\alpha I-\Delta^{+}\right)\mathbb{P}_{Z_{d-1}}\psi\right\rangle \right|\leq\rho_{\alpha}\sqrt{\left|A_{0}\right|\left(1-\frac{\sum_{i=0}^{d-1}\left|A_{i}\right|}{n}\right)\left|A_{d}\right|\left(1-\frac{\sum_{i=1}^{d}\left|A_{i}\right|}{n}\right)}\left|A_{1}\right|\ldots\left|A_{d-1}\right|.
\]

\end{rem*}

\subsection{\label{sub:Gromov's-Geometric-Overlap}Gromov's geometric overlap}

Here we prove Corollary \ref{cor:Geometric_overlap_cor}, which gives
a bound on the geometric overlap of a complex in terms of the width
of its spectrum.
\begin{proof}[Proof of Corollary \ref{cor:Geometric_overlap_cor}]
Given $\varphi:V\rightarrow\mathbb{R}^{d+1}$, choose arbitrarily
some partition of $V$ into equally sized parts $P_{0},\ldots,P_{d}$.
By Pach's theorem \cite{Pac98}, there exist $c_{d}>0$ and $Q_{i}\subseteq P_{i}$
of sizes $\left|Q_{i}\right|=c_{d}\left|P_{i}\right|$ such that for
some $x\in\mathbb{R}^{d+1}$ we have $x\in\mathrm{conv}\left\{ \varphi\left(v\right)\,\middle|\, v\in\sigma\right\} $
for any $\sigma\in F\left(Q_{0},\ldots,Q_{d}\right)$. By the Mixing
Lemma (Theorem \ref{thm:mixing}),
\[
\left|F\left(Q_{0},\ldots,Q_{d}\right)\right|\geq\frac{k\cdot\left|Q_{0}\right|\cdot\ldots\cdot\left|Q_{d}\right|}{n}-\varepsilon\cdot\left(\left|Q_{0}\right|\cdot\ldots\cdot\left|Q_{d}\right|\right)^{\frac{d}{d+1}}=\left(\frac{c_{d}n}{d+1}\right)^{d}\left(\frac{kc_{d}}{d+1}-\varepsilon\right).
\]
On the other hand,
\[
\left|X^{d}\right|=\left|X^{d-1}\right|\frac{k}{d+1}={n \choose d}\frac{k}{d+1}\leq\left(\frac{en}{d}\right)^{d}\frac{k}{d+1}.
\]
As this holds for every $\varphi$,
\[
\overlap\left(X\right)\geq\left(\frac{c_{d}d}{e\left(d+1\right)}\right)^{d}\left(c_{d}-\frac{\varepsilon\left(d+1\right)}{k}\right)\geq\frac{c_{d}^{d}}{e^{d+1}}\left(c_{d}-\frac{\varepsilon\left(d+1\right)}{k}\right).
\]
 \end{proof}
\begin{rem}
\label{rem:overlap-lambda}Following Remark \ref{rem:discrepancy},
if $\Spec\Delta^{+}\big|_{Z_{d-1}}\subseteq\left[\lambda_{avg}-\varepsilon',\lambda_{avg}+\varepsilon'\right]$
then using the Mixing Lemma with $\alpha=\lambda_{avg}=\frac{nk}{n-d}$
one has 
\[
\left|F\left(Q_{0},\ldots,Q_{d}\right)\right|\geq\frac{k\cdot\left|Q_{0}\right|\cdot\ldots\cdot\left|Q_{d}\right|}{n-d}-\varepsilon'\cdot\left(\left|Q_{0}\right|\cdot\ldots\cdot\left|Q_{d}\right|\right)^{\frac{d}{d+1}}\geq\left(\frac{c_{d}n}{d+1}\right)^{d}\left(\frac{nkc_{d}}{\left(n-d\right)\left(d+1\right)}-\varepsilon'\right)
\]
so that 
\[
\overlap\left(X\right)\geq\frac{c_{d}^{d}n}{e^{d+1}\left(n-d\right)}\left(c_{d}-\frac{\varepsilon'\left(d+1\right)}{\lambda_{avg}}\right).
\]

\end{rem}

\subsection{\label{sub:Expansion-in-random}Expansion in random complexes}

In this section we prove Corollaries \ref{cor:Random_cor} and \ref{cor:random-overlap},
regarding the expansion of random Linial-Meshulam complexes. The main
idea is the following lemma, which is a variation on the analysis
in \cite{GW12} of the spectrum of $D-\Delta^{+}$ for $X=X\left(d,n,p\right)$.
\begin{lem}
\label{lem:spec_bound}Let $c>0$. There exists $\gamma=O\left(\negmedspace\sqrt{C}\right)$
such that $X=X\left(d,n,\frac{C\cdot\log n}{n}\right)$ satisfies
\textup{
\[
\Spec\left(\Delta^{+}\big|_{Z_{d-1}}\right)\subseteq\left[\left(C-\gamma\right)\log n,\left(C+\gamma\right)\log n\right]
\]
with probability at least $1-n^{-c}$.}\end{lem}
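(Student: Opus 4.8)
The plan is to write the upper Laplacian $\Delta^{+}$ of $X=X\bigl(d,n,\tfrac{C\log n}{n}\bigr)$ as a sum of independent random rank-one operators, to center it, and to exploit that on $Z_{d-1}$ its expectation is a scalar. Let $K^{(d)}_{n}$ denote the complete $d$-complex on $V$; for an oriented $d$-cell $\tau$ let $b_{\tau}=\partial_{d}e_{\tau}\in\Omega^{d-1}$ be its oriented boundary ($e_{\tau}$ the unit indicator of $\tau$), so $\|b_{\tau}\|^{2}=d+1$. Then $M_{\tau}:=b_{\tau}b_{\tau}^{*}$ is a fixed positive semidefinite operator on $\Omega^{d-1}$ with $\|M_{\tau}\|=d+1$, $M_{\tau}^{2}=(d+1)M_{\tau}$, and range inside $B_{d-1}\subseteq Z_{d-1}$, so it annihilates $B^{d-1}$ and preserves the decomposition $\Omega^{d-1}=B^{d-1}\oplus Z_{d-1}$. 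Since $\Delta^{+}=\partial_{d}\partial_{d}^{*}=\sum_{\tau\in X^{d}}M_{\tau}$, we may write $\Delta^{+}_{X}=\sum_{\tau}\xi_{\tau}M_{\tau}$ with the $\xi_{\tau}$ independent, $\xi_{\tau}=1$ iff $\tau\in X^{d}$, $\mathbb{E}\,\xi_{\tau}=p$. Hence $\mathbb{E}\,\Delta^{+}_{X}=p\,\Delta^{+}_{K^{(d)}_{n}}$, and since the complement $d$-complex of $K^{(d)}_{n}$ has no $d$-cells (so $\Delta^{+}_{\overline{K^{(d)}_{n}}}=0$), Proposition~\ref{prop:full-skel-prop} gives $\Delta_{K^{(d)}_{n}}=nI$ and therefore, as $\Delta^{-}$ vanishes on $Z_{d-1}$, that $\Delta^{+}_{K^{(d)}_{n}}\big|_{Z_{d-1}}=nI$. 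Thus $W:=\Delta^{+}_{X}-\mathbb{E}\,\Delta^{+}_{X}$ is self-adjoint, preserves $Z_{d-1}$, and
\[
\Delta^{+}_{X}\big|_{Z_{d-1}}=np\cdot I+W\big|_{Z_{d-1}}=C\log n\cdot I+W\big|_{Z_{d-1}},
\]
so the lemma reduces to showing $\|W\|\le\gamma\log n$ with probability at least $1-n^{-c}$ for some $\gamma=O(\sqrt{C})$.

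For this norm bound I would invoke a variation of the analysis of \cite{GW12}. The operator $W=\sum_{\tau}(\xi_{\tau}-p)M_{\tau}$ is a sum of independent, mean-zero, self-adjoint operators on a space of dimension $\binom{n}{d}\le n^{d}$, with $\|(\xi_{\tau}-p)M_{\tau}\|\le d+1$ and matrix variance $\bigl\|\sum_{\tau}\mathbb{E}(\xi_{\tau}-p)^{2}M_{\tau}^{2}\bigr\|=(d+1)\,p(1-p)\,\|\Delta^{+}_{K^{(d)}_{n}}\|\le(d+1)C\log n$ (using $M_{\tau}^{2}=(d+1)M_{\tau}$ and $\|\Delta^{+}_{K^{(d)}_{n}}\|=n$). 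The matrix Bernstein inequality --- equivalently, the trace-moment computation of \cite{GW12} for the normalized Laplacian, transcribed to the Laplacian used here --- bounds $\Pr[\|W\|\ge\gamma\log n]$ by $2n^{d}\exp\bigl(-\tfrac{\gamma^{2}\log n}{2(d+1)(C+\gamma/3)}\bigr)$; choosing $\gamma=K\sqrt{C}$ with $K^{2}>2(d+1)(d+c)$ makes this at most $n^{-c}$ once $C$ is large. Since $\gamma=O(\sqrt{C})$, the lemma follows.

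The crux is exactly this spectral-norm bound on the \emph{centered} operator $W$. Centering is essential: the uncentered $\Delta^{+}_{X}$ has norm of order $np=C\log n$ on $Z_{d-1}$, not $O(\sqrt{np\log n})=O(\sqrt{C}\log n)$; and one needs the polynomial tail $1-n^{-c}$, not merely an a.a.s.\ statement. Establishing this --- adapting the spectral estimate of \cite{GW12} from their normalized Laplacian to the combinatorial one, and upgrading ``asymptotically almost surely'' to a genuine tail bound --- is the hard part. The remaining ingredients, namely the rank-one decomposition of $\Delta^{+}$, the identity $\Delta^{+}_{K^{(d)}_{n}}|_{Z_{d-1}}=nI$ from Proposition~\ref{prop:full-skel-prop}, and the reduction of the spectral statement to a norm bound on $W$, are routine linear algebra.
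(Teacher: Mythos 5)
Your proof is correct, and it arrives at the same reduction as the paper --- bounding $\left\Vert \left(\Delta^{+}-pn\cdot I\right)\big|_{Z_{d-1}}\right\Vert$ by $O\left(\sqrt{C}\right)\log n$ with probability $1-n^{-c}$ --- but handles the concentration step by a genuinely different (and cleaner) decomposition. The paper splits the centered operator into a diagonal part $D-p\left(n-d\right)I$ and an off-diagonal part $\left(D-\Delta^{+}\right)-p\left(D_{K_{n}^{d}}-\Delta_{K_{n}^{d}}^{+}\right)$, controls the first by a scalar Chernoff bound on the degrees $\deg\sigma\sim B\left(n-d,p\right)$ together with a union bound over the ${n \choose d}$ cells, invokes the matrix-concentration estimate from the proof of Theorem 7 of \cite{GW12} for the second, and combines the two by the triangle inequality. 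You instead write $\Delta^{+}=\sum_{\tau}\xi_{\tau}b_{\tau}b_{\tau}^{*}$ as a sum of independent rank-one operators, note that its expectation restricted to $Z_{d-1}$ is exactly $np\cdot I=C\log n\cdot I$, and apply matrix Bernstein once to the full centered sum; your parameters (uniform bound $d+1$, variance proxy $\left(d+1\right)p\left(1-p\right)n\leq\left(d+1\right)C\log n$, ambient dimension ${n \choose d}\leq n^{d}$) are right, and the resulting tail bound with $\gamma=K\sqrt{C}$, $K^{2}>2\left(d+1\right)\left(d+c\right)$, does give probability at least $1-n^{-c}$ for $C$ and $n$ large, which is the regime the paper also works in. Your route buys self-containedness --- one off-the-shelf inequality in place of a separate degree estimate plus a citation to \cite{GW12} --- and avoids the constant-factor loss of the triangle-inequality split; the paper's route buys the ability to reuse the published estimate of \cite{GW12} verbatim. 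The one thing to pin down in a final write-up is the exact version of matrix Bernstein you invoke (e.g.\ the form derived from \cite{Oli10}, which is also the engine behind the \cite{GW12} bound), since that is the entire probabilistic content of the argument.
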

\begin{proof}
We denote $p=\frac{C\cdot\log n}{n}$. For $C$ large enough we shall
find $\gamma=O\left(\negmedspace\sqrt{C}\right)$ such that 
\begin{equation}
\left\Vert \left(\Delta^{+}-pn\cdot I\right)\big|_{Z_{d-1}}\right\Vert \leq\gamma\log n\label{eq:close-to-complete}
\end{equation}
holds with probability at least $1-n^{-c}$. This implies the Lemma,
as 
\[
\Spec\left(\Delta^{+}\big|_{Z_{d-1}}\right)\subseteq\left[pn-\gamma\log n,pn+\gamma\log n\right]=\left[\left(C-\gamma\right)\log n,\left(C+\gamma\right)\log n\right].
\]
To show \eqref{eq:close-to-complete} we use
\begin{align}
\left\Vert \left(\Delta^{+}-pn\cdot I\right)\big|_{Z_{d-1}}\right\Vert  & =\left\Vert \left(\Delta^{+}-p\left(n-d\right)I-pdI+D-D\right)\big|_{Z_{d-1}}\right\Vert \nonumber \\
 & \leq\left\Vert \left(D-p\left(n-d\right)I\right)\big|_{Z_{d-1}}\right\Vert +\left\Vert \left(D-\Delta^{+}+pdI\right)\big|_{Z_{d-1}}\right\Vert \label{eq:big-triangle-ineq}
\end{align}
and we will treat each term separately. For the first, we have
\[
\left\Vert \left(D-\left(n-d\right)pI\right)\big|_{Z_{d-1}}\right\Vert \leq\left\Vert D-\left(n-d\right)pI\right\Vert =\max_{\sigma\in X^{d-1}}\left|\deg\sigma-\left(n-d\right)p\right|.
\]
Since $\deg\sigma\sim B\left(n-d,p\right)$, a Chernoff type bound
(e.g. \cite[Theorem 1]{Jan02}) gives that for every $t>0$
\[
\mathrm{Prob}\left(\left|\deg\sigma-\left(n-d\right)p\right|>t\right)\leq2e^{-\frac{t^{2}}{2\left(n-d\right)p+\frac{2t}{3}}}.
\]
By a union bound on the degrees of the $\left(d-1\right)$-cells we
get 
\begin{equation}
\mathrm{Prob}\left(\max_{\sigma\in X^{d-1}}\left|\deg\sigma-\left(n-d\right)p\right|>t\right)\leq2{n \choose d}e^{-\frac{t^{2}}{2\left(n-d\right)p+\frac{2t}{3}}},\label{eq:union-deg}
\end{equation}
and a straightforward calculation shows that there exists $\alpha=\alpha\left(c,d\right)>0$
such that for $t=\alpha\sqrt{np\log n}$, the r.h.s.\ in \eqref{eq:union-deg}
is bounded by $\frac{1}{2n^{c}}$ for large enough $C$ and $n$.
In total this implies 
\begin{equation}
\mathrm{Prob}\left(\left\Vert \left(D-\left(n-d\right)pI\right)\big|_{Z_{d-1}}\right\Vert \leq\alpha\sqrt{C}\log n\right)\geq1-\frac{1}{2n^{c}}.\label{eq:degree-deviation}
\end{equation}
In order to understand the last term in \eqref{eq:big-triangle-ineq}
we follow \cite{GW12}, which shows that $\left(D_{X}-\Delta_{X}^{+}\right)\big|_{Z_{d-1}}$
is close to $p$ times $\left(D_{K_{n}^{d}}-\Delta_{K_{n}^{d}}^{+}\right)\big|_{Z_{d-1}}$$ $,
where $K_{n}^{d}$ is the complete $d$-complex on $n$ vertices.
Note that $D_{K_{n}^{d}}=\left(n-d\right)\cdot I$ and $\Delta_{K_{n}^{d}}^{+}\big|_{Z_{d-1}}=n\cdot I$,
and that $Z_{d-1}\left(X\right)=Z_{d-1}\left(K_{n}^{d}\right)$ as
both have the same $\left(d-1\right)$-skeleton. In the proof of Theorem
$7$ in \cite{GW12} (which uses an idea from \cite{Oli10}), it is
shown that 
\[
\mathrm{Prob}\left(\left\Vert \left(D_{X}-\Delta_{X}^{+}+pdI\right)\big|_{Z_{d-1}}\right\Vert \geq t\right)=\mathrm{Prob}\left(\left\Vert \left(D_{X}-\Delta_{X}^{+}\right)\big|_{Z_{d-1}}-p\left(D_{K_{n}^{d}}-\Delta_{K_{n}^{d}}^{+}\right)\big|_{Z_{d-1}}\right\Vert \geq t\right)\leq2{n \choose d}e^{-\frac{t^{2}}{8pnd+4t}}.
\]
Again, there exists $\beta=\beta\left(c,d\right)>0$ such that for
$t=\beta\sqrt{np\log n}$, the r.h.s.\ is bounded by $\frac{1}{2n^{c}}$
for large enough $C$ and $n$. Consequently, 
\[
\mathrm{Prob}\left(\left\Vert \left(D-\Delta^{+}+pdI\right)\big|_{Z_{d-1}}\right\Vert \leq\beta\sqrt{C}\log n\right)\geq1-\frac{1}{2n^{c}},
\]
so that 
\[
\mathrm{Prob}\left(\left\Vert \left(\Delta^{+}-pnI\right)\big|_{Z_{d-1}}\right\Vert \leq\left(\alpha+\beta\right)\sqrt{C}\log n\right)\geq1-n^{-c},
\]
and $\gamma=\left(\alpha+\beta\right)\sqrt{C}$ gives the required
result.
\end{proof}
We obtain the following corollary, which implies in particular Corollaries
\ref{cor:Random_cor} and \ref{cor:random-overlap}.
\begin{cor}
\label{cor:Random_unified}Observe $X=X\left(d,n,\frac{C\cdot\log n}{n}\right)$. 
\begin{enumerate}
\item Given $c>0$, there exist a constant $H=C-O\left(\sqrt{C}\right)$
such that for large enough $n$ 
\begin{equation}
\mathrm{Prob}\left(h\left(X\right)\geq H\cdot\log n\right)\geq1-n^{-c},\label{eq:cheeger-random}
\end{equation}
and for any $\vartheta<\left(\frac{c_{d}}{e}\right)^{d+1}$ (where
$c_{d}$ is Pach's constant \cite{Pac98}), for $C$ and $n$ large
enough 
\[
\mathrm{Prob}\left(\overlap\left(X\right)>\vartheta\right)\geq1-n^{-c}.
\]

\item If $C<1$ then $\mathrm{Prob}\left(h\left(X\right)=0\right)\overset{{\scriptscriptstyle n\rightarrow\infty}}{\longrightarrow}1.$ 
\end{enumerate}
\end{cor}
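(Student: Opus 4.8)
The plan is to extract part (1) directly from Lemma \ref{lem:spec_bound} combined with the two main theorems, and to prove part (2) by producing an ``isolated'' $(d-1)$-cell and the degenerate partition it forces.

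\textbf{Part (1).} Fix $c>0$ and let $\gamma=O(\sqrt C)$ be as in Lemma \ref{lem:spec_bound}, so that with probability at least $1-n^{-c}$ the complex $X=X(d,n,\tfrac{C\log n}{n})$ satisfies $\Spec(\Delta^{+}|_{Z_{d-1}})\subseteq[(C-\gamma)\log n,(C+\gamma)\log n]$; from now on work on this event. Since $X$ has a complete skeleton, Theorem \ref{thm:Isoperimetric_inequality} gives
\[
h(X)\ \ge\ \lambda(X)\ =\ \min\Spec(\Delta^{+}|_{Z_{d-1}})\ \ge\ (C-\gamma)\log n,
\]
which is \eqref{eq:cheeger-random} with $H=C-\gamma=C-O(\sqrt C)$. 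For the overlap bound, observe that $\lambda_{avg}$, the mean of $\Spec(\Delta^{+}|_{Z_{d-1}})$ (Proposition \ref{prop:density-degree-spec}), also lies in that interval, so $\Spec(\Delta^{+}|_{Z_{d-1}})\subseteq[\lambda_{avg}-\varepsilon',\lambda_{avg}+\varepsilon']$ with $\varepsilon'\le 2\gamma\log n$ and $\lambda_{avg}\ge(C-\gamma)\log n$. Plugging this into Remark \ref{rem:overlap-lambda} yields
\[
\overlap(X)\ \ge\ \frac{c_d^{d}\,n}{e^{d+1}(n-d)}\left(c_d-\frac{\varepsilon'(d+1)}{\lambda_{avg}}\right)\ \ge\ \frac{c_d^{d}\,n}{e^{d+1}(n-d)}\left(c_d-\frac{2\gamma(d+1)}{C-\gamma}\right).
\]
Because $\gamma=O(\sqrt C)$, the correction $\tfrac{2\gamma(d+1)}{C-\gamma}$ is $O(1/\sqrt C)$, so as one first sends $n\to\infty$ (hence $\tfrac{n}{n-d}\to1$) and then $C\to\infty$ the right-hand side tends to $(c_d/e)^{d+1}$; thus for any $\vartheta<(c_d/e)^{d+1}$ it exceeds $\vartheta$ once $C$ and then $n$ are large enough, all on the above probability-$(1-n^{-c})$ event.

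\textbf{Part (2).} For $C<1$ I would show that $X$ a.a.s.\ contains a $(d-1)$-cell $\tau=\{v_0,\dots,v_{d-1}\}$ (present in $X$ since the skeleton is complete) with $\deg\tau=0$. Given such a $\tau$, the partition $A_i=\{v_i\}$ for $0\le i\le d-1$ and $A_d=V\setminus\tau$ satisfies $F(A_0,\dots,A_d)=\varnothing$: any $\sigma\in F(A_0,\dots,A_d)$ would contain each of $v_0,\dots,v_{d-1}$ and one further vertex, i.e.\ would be a $d$-cell having $\tau$ as a face, of which there are none. Hence $h(X)=0$. (For $d=1$ this is precisely the classical fact that $G(n,\tfrac{C\log n}{n})$ has an isolated vertex when $C<1$.) The existence of $\tau$ is a second-moment computation: if $Y$ counts the degree-$0$ $(d-1)$-cells, then for a fixed such cell $\mathrm{Prob}(\deg=0)=(1-p)^{n-d}=n^{-C+o(1)}$, so $\mathbb{E}[Y]=\binom nd(1-p)^{n-d}=n^{(d-C)+o(1)}\to\infty$ since $d-C>0$; moreover $\{\deg\tau=0\}$ and $\{\deg\tau'=0\}$ are independent unless $|\tau\cap\tau'|=d-1$, the only situation in which $\tau$ and $\tau'$ lie in a common $d$-cell (and then in exactly one), so summing these few covariances gives $\operatorname{Var}(Y)/(\mathbb{E}[Y])^2\to0$, whence $Y\ge1$ a.a.s.\ by Chebyshev.

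\textbf{Main obstacle.} All the spectral work is hidden in Lemma \ref{lem:spec_bound}, so in part (1) only the bookkeeping of the $O(\sqrt C)$ and $O(1/\sqrt C)$ error terms and the order of the two limits remains. The genuine — though routine — obstacle is the second-moment estimate in part (2), which rests on the elementary observation that two distinct $d$-element vertex sets lie in at most one common $d$-cell, and only when they agree on $d-1$ vertices, keeping the degree-$0$ events only weakly correlated.
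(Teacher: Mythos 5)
Your proof is correct, and part (1) follows the paper's argument verbatim: Lemma \ref{lem:spec_bound} plus Theorem \ref{thm:Isoperimetric_inequality} give the Cheeger bound, and Lemma \ref{lem:spec_bound} plus Remark \ref{rem:overlap-lambda} give the overlap bound, with the same $\varepsilon'=2\gamma\log n$ bookkeeping and the same limit in $C$. The only divergence is in part (2). Both arguments reduce to exhibiting a $(d-1)$-cell of degree zero and then observing that the partition $A_i=\{v_i\}$, $A_d=V\setminus\tau$ forces $F(A_0,\ldots,A_d)=\varnothing$, hence $h(X)=0$; but where you run a direct second-moment computation on the number of degree-zero $(d-1)$-cells (correctly noting that two distinct $d$-element vertex sets share a common $d$-cell only when they overlap in $d-1$ vertices, and then in exactly one, so the covariance sum is negligible), the paper instead fixes a $(d-2)$-cell $\tau$, observes following \cite{GW12} that $\lk\tau\sim G(n-d+1,\tfrac{C\log n}{n})$, and imports the classical fact that this Erd\H{o}s--R\'enyi graph has isolated vertices a.a.s.\ when $C<1$, those isolated vertices being precisely degree-zero $(d-1)$-cells containing $\tau$. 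Your version is self-contained and makes the threshold $C<d$ (rather than merely $C<1$) visible in the first-moment exponent $d-C$; the paper's version is shorter and leans on known results, at the cost of only certifying the range $C<1$. Both are sound.
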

\begin{proof}
$\left(1\right)$ Since $\lambda\left(X\right)\leq h\left(X\right)$
(Theorem \ref{thm:Isoperimetric_inequality}), \eqref{eq:cheeger-random}
follows from Lemma \ref{lem:spec_bound} with $H=C-\gamma$ (recall
that $\gamma=O\left(\negmedspace\sqrt{C}\right)$). We turn to the
geometric overlap. From Lemma \ref{lem:spec_bound} it follows that
for $C$ large enough a.a.s.\ $\Spec\Delta^{+}\big|_{Z_{d-1}}\subseteq\left[\left(C-\gamma\right)\log n,\left(C+\gamma\right)\log n\right]$.
Therefore, $\Spec\left(\Delta^{+}\big|_{Z_{d-1}}\right)\subseteq\left[\lambda_{avg}-\varepsilon',\lambda_{avg}+\varepsilon'\right]$
with $\varepsilon'=2\gamma\log n$. By Remark \ref{rem:overlap-lambda},
\[
\overlap\left(X\right)\geq\frac{c_{d}^{d}n}{e^{d+1}\left(n-d\right)}\left(c_{d}-\frac{2\gamma\log n\left(d+1\right)}{\lambda_{avg}}\right)\geq\frac{c_{d}^{d}}{e^{d+1}}\left(c_{d}-\frac{2\gamma\left(d+1\right)}{C-\gamma}\right)\overset{{\scriptscriptstyle C\rightarrow\infty}}{\longrightarrow}\left(\frac{c_{d}}{e}\right)^{d+1}.
\]
$\left(2\right)$ Choose some $\tau\in X^{d-2}$. It was observed
in \cite{GW12} that $\lk\tau\sim G\left(n-d+1,\frac{C\cdot\log n}{n}\right)$
(where $G\left(n,p\right)=X\left(1,n,p\right)$ is the Erd\H{o}s\textendash{}Rényi
model), and $G\left(n,\frac{C\cdot\log n}{n}\right)$ has isolated
vertices a.a.s.\ for $C<1$ \cite{erdHos1959random,erdos1961evolution}.
These correspond to isolated $\left(d-1\right)$-cells in $X$ (cells
of degree zero), whose existence implies $h\left(X\right)=0$ (and
thus also $\lambda\left(X\right)=0$).
\end{proof}

\section{\label{sec:Open-Questions}Open questions }

\paragraph{Non-complete skeleton.}

The proof of the generalized mixing lemma assumes that the skeleton
is complete. This raises the following question:
\begin{lyxlist}{00.00.0000}
\item [{\textbf{Question:}}] Can the discrepancy in $X$ be bounded for
general simplicial complexes?
\end{lyxlist}
As remarked after the statement of Theorem \ref{thm:Isoperimetric_inequality},
one always has $h\left(X\right)=0$ for $X$ with a non-complete skeleton.
This calls for a refined definition, and a natural candidate is the
following:
\[
\widetilde{h}\left(X\right)=\min\limits _{V=\coprod_{i=0}^{d}A_{i}}\frac{n\cdot\left|F\left(A_{0},A_{1},\ldots,A_{d}\right)\right|}{\left|F^{\partial}\left(A_{0},A_{1},\ldots,A_{d}\right)\right|},
\]
where $F^{\partial}\left(A_{0},A_{1},\ldots,A_{d}\right)$ denotes
the set of $\left(d-1\right)$-spheres (i.e.\ copies of the $\left(d-1\right)$-skeleton
of the $d$-simplex) having one vertex in each $A_{i}$. For a complex
$X$ with a complete skeleton, $\widetilde{h}\left(X\right)=h\left(X\right)$
as $F^{\partial}\left(A_{0},\ldots,A_{d}\right)=A_{0}\times\ldots\times A_{d}$.
It is not hard to see that a lower Cheeger inequality does not hold
here: consider any non-minimal triangulation of the $\left(d-1\right)$-shpere,
and attach a single $d$-simplex to one of the $\left(d-1\right)$-cells
on it. The obtained complex has $\lambda=0$, and $\widetilde{h}=n$.
However, we conjecture that the upper bound still holds:
\begin{lyxlist}{00.00.0000}
\item [{\textbf{Question:}}] Does the inequality $\lambda\left(X\right)\leq\widetilde{h}\left(X\right)$
holds for every $d$-complex?
\end{lyxlist}

\paragraph{Inverse Mixing Lemma}

In \cite{BL06} Bilu and Linial prove an Inverse Mixing Lemma for
graphs:
\begin{thm*}[\cite{BL06}]
Let $G$ be a $k$-regular graph on $n$ vertices. Suppose that for
any disjoint $A,B\subseteq V$
\[
\left|E\left(A,B\right)-\frac{k\left|A\right|\left|B\right|}{n}\right|\leq\rho\sqrt{\left|A\right|\left|B\right|}.
\]
Then the nontrivial eigenvalues of $kI-\Delta_{G}^{+}$ are bounded,
in absolute value, by $O\left(\rho\left(1+\log\left(\frac{k}{\rho}\right)\right)\right)$.\end{thm*}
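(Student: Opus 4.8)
The plan is to reconstruct the Bilu--Linial argument, which proceeds in three stages.

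\emph{Stage 1 (reduction to a matrix-norm bound).} Set $A=kI-\Delta_G^+$ (the adjacency matrix) and $C=A-\frac{k}{n}J$, where $J$ is the all-ones matrix. Then $C$ is symmetric, $C\mathbf 1=0$, it agrees with $A$ on $\mathbf 1^\perp$, so $\left\Vert C\right\Vert=\max\{\left|\lambda_2(A)\right|,\left|\lambda_n(A)\right|\}$ is exactly the quantity to be bounded; moreover $\left\Vert C\right\Vert\le k$ trivially. The hypothesis says precisely that $\left|\langle\mathbf 1_S,C\mathbf 1_T\rangle\right|\le\rho\sqrt{|S|\,|T|}$ for all \emph{disjoint} $S,T\subseteq V$, and the goal is to upgrade this to control of $\langle x,Cy\rangle$ for arbitrary test vectors.

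\emph{Stage 2 (from disjoint sets to arbitrary $\{-1,0,1\}$-vectors).} First I would bound the ``self-discrepancy'' $\langle\mathbf 1_S,C\mathbf 1_S\rangle=2e(S)-\frac{k}{n}|S|^2$ of a single set $S$. Splitting $S$ into two halves, the cross term is controlled by the disjoint-sets hypothesis; recursing on the halves and terminating on pieces $Q$ small enough that the trivial estimate $\left|\langle\mathbf 1_Q,C\mathbf 1_Q\rangle\right|\le\min(|Q|^2,k|Q|)$ already suffices, a dyadic summation yields $\left|\langle\mathbf 1_S,C\mathbf 1_S\rangle\right|=O\bigl(\rho\,|S|\,\log\tfrac{k}{\rho}\bigr)$. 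Combined with the hypothesis (write $\mathbf 1_S=\mathbf 1_{S\setminus T}+\mathbf 1_{S\cap T}$ and split $\mathbf 1_T$ likewise) this gives $\left|\langle\mathbf 1_S,C\mathbf 1_T\rangle\right|=O\bigl(\rho\log\tfrac{k}{\rho}\bigr)\sqrt{|S|\,|T|}$ for \emph{all} $S,T$; decomposing a $\{-1,0,1\}$-vector into its positive and negative supports then yields $\left|\langle s,Ct\rangle\right|=O\bigl(\rho\log\tfrac{k}{\rho}\bigr)\left\Vert s\right\Vert\left\Vert t\right\Vert$ for all $s,t\in\{-1,0,1\}^V$.

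\emph{Stage 3 (rounding the extremal vector).} Since $C$ is symmetric with $C\mathbf 1=0$, $\left\Vert C\right\Vert=\left|\langle v,Cv\rangle\right|$ for a unit eigenvector $v\perp\mathbf 1$ with $Cv=\pm\lambda v$, $\lambda=\left\Vert C\right\Vert$. I would split $v$ into dyadic magnitude buckets $V_a=\{i:\left|v_i\right|\in(2^{-a-1},2^{-a}]\}$ and approximate $v$ on $V_a$ by $2^{-a}s_a$ with $s_a$ the sign pattern of $v$ there; coordinates below a threshold chosen in terms of $\lambda$ and $k$ are discarded, and their contribution is negligible because the eigenvalue relation $\left\Vert Cv\right\Vert_\infty=\lambda\left\Vert v\right\Vert_\infty$ (together with $\left\Vert C\right\Vert\le k$) controls the resulting error, leaving only $O(\log\tfrac{k}{\rho})$ relevant buckets. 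Applying Stage 2 to each pair $(s_a,s_b)$ and using $\sum_a\left\Vert 2^{-a}s_a\right\Vert^2=O(\left\Vert v\right\Vert^2)=O(1)$, Cauchy--Schwarz over the $O(\log\tfrac{k}{\rho})$ buckets gives $\left\Vert C\right\Vert=O\bigl(\rho\log^2\tfrac{k}{\rho}\bigr)$, and a more careful accounting --- grouping the pair contributions by the larger bucket index and letting the self-discrepancy estimate depend on the actual bucket size rather than using the uniform bound --- removes one logarithm and produces the stated $O\bigl(\rho(1+\log\tfrac{k}{\rho})\bigr)$.

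\emph{Main obstacle.} The entire difficulty is the logarithmic bookkeeping, in two places: (a) proving the self-discrepancy bound with the \emph{correct} logarithm $\log(k/\rho)$ rather than $\log n$, which is exactly what forces the use of both trivial estimates $|Q|^2$ and $k|Q|$ and a carefully placed cutoff in the dyadic recursion; and (b) showing that only $O(\log(k/\rho))$ magnitude buckets of $v$ need be retained, for which one must exploit the eigenvector structure to bound the mass of the small coordinates. Once these are in place the rest --- the set-splitting identities, the norm computations $\left\Vert\mathbf 1_{A_0}\right\Vert$ etc., and the Cauchy--Schwarz summation --- is routine.
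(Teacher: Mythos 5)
The paper does not prove this statement: it is quoted verbatim from Bilu--Linial \cite{BL06} in the open-questions section, purely to motivate the question of a higher-dimensional inverse mixing lemma. So there is no in-paper proof to compare against, and your proposal has to be judged as a reconstruction of the Bilu--Linial argument. Its architecture is the right one (reduce to bounding $\left\Vert C\right\Vert$ for $C=A-\frac{k}{n}J$; upgrade the disjoint-support discrepancy hypothesis to all $\{0,\pm1\}$ test vectors; round the extremal eigenvector into dyadic magnitude buckets and control the number of buckets). But the two steps you yourself flag as ``the entire difficulty'' are exactly the ones you do not carry out, and in both the mechanism you sketch is doubtful.

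First, the self-discrepancy bound. Your dyadic bisection of $S$ pays a cross-term of order $\rho\left|S\right|$ at \emph{every} level of the recursion, so terminating at pieces of size $q$ yields roughly $\rho\left|S\right|\log(\left|S\right|/q)$; no choice of cutoff $q$ combining the two trivial estimates $\left|Q\right|^{2}$ and $k\left|Q\right|$ obviously turns $\log(\left|S\right|/q)$ into $\log(k/\rho)$ when $\left|S\right|$ is much larger than $k$. The standard argument avoids the recursion entirely: average $\mathbf 1_{T}^{\top}C\,\mathbf 1_{S\setminus T}$ over a uniformly random subset $T\subseteq S$ to get $\bigl|\mathbf 1_{S}^{\top}C\,\mathbf 1_{S}\bigr|\leq 2\rho\left|S\right|+\sum_{v\in S}\left|C_{vv}\right|$, with no logarithm at all (the diagonal term $\left|S\right|k/n$ then needs its own small argument, which you do not mention). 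Second, your endgame concedes that the bucket-by-bucket Cauchy--Schwarz gives only $O\bigl(\rho\log^{2}(k/\rho)\bigr)$ and asserts that ``a more careful accounting removes one logarithm.'' That single sentence is where the theorem actually lives; with the averaging bound in place of your recursive one, Stage 2 costs no logarithm and the single $\log(k/\rho)$ comes solely from the number of retained buckets, but as written your plan stacks two logarithms and the claimed cancellation is unjustified. You should also make the bucket-truncation quantitative: the threshold below which coordinates of $v$ are discarded must be chosen so that both the number of surviving buckets is $O(\log(k/\rho))$ and the discarded mass contributes $O(\rho)$ to $\left\langle v,Cv\right\rangle$, and the inequality you invoke ($\left\Vert Cv\right\Vert_{\infty}=\lambda\left\Vert v\right\Vert_{\infty}$ together with $\left\Vert C\right\Vert\leq k$) does not by itself deliver both simultaneously.
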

\begin{lyxlist}{00.00.0000}
\item [{\textbf{Question:}}] \noindent Can one prove a generalized Inverse
Mixing Lemma for simplicial complexes?
\end{lyxlist}

\paragraph{Random simplicial complexes}

In the random graph model $G=G\left(n,p\right)=X\left(1,n,p\right)$,
taking $p=\frac{k}{n}$ with a fixed $k$ gives disconnected $G$
a.a.s. However, random $k$-regular graphs are a.a.s.\ connected,
and in fact are excellent expanders (see e.g.\ \cite{friedman2003proof,puder2012expansion}).
In higher dimension, $X=X\left(d,n,\frac{k}{n}\right)$ has a.a.s.\ a
nontrivial $\left(d-1\right)$-homology, and also $h\left(X\right)=0$
(by Corollary \ref{cor:Random_unified} $\left(2\right)$). It is
thus natural to ask about the expansion quality of $k$-regular $d$-complexes,
but since it is not clear whether such complexes even exist, we say
that a $k$\emph{-semiregular }complex is a complex with $k-\sqrt{k}\leq\deg\sigma\leq k+\sqrt{k}$
for all $\sigma\in X^{\dim X-1}$, and ask:
\begin{lyxlist}{00.00.0000}
\item [{\textbf{Question:}}] \noindent Are $\lambda\left(X\right)$, $h\left(X\right)$
and $\overlap\left(X\right)$ bounded away from zero with high probability,
for $X$ a random $k$-semiregular $d-$complex with a complete skeleton?
\end{lyxlist}

\paragraph{A Riemannian analogue}

In Riemannian geometry, the Cheeger constant of a Riemannian manifold
$M$ is concerned with its partitions into two submanifolds along
a common boundary of codimension one. The original Cheeger inequalities,
due to Cheeger \cite{Che70} and Buser \cite{Bus82}, relate the Cheeger
constant to the smallest eigenvalue of the Laplace-Beltrami operator
on $C^{\infty}\left(M\right)=\Omega^{0}\left(M\right)$.
\begin{lyxlist}{00.00.0000}
\item [{\textbf{Question:}}] Can one define an isoperimetric quantity which
concerns partitioning of $M$ into $d+1$ parts, and relate it to
the spectrum of the Laplace-Beltrami operator on $\Omega^{d-1}\left(M\right)$,
the space of smooth $\left(d-1\right)$-forms?
\end{lyxlist}

\paragraph{Ramanujan complexes}

\emph{Ramanujan Graphs }are expanders which are spectrally optimal
in the sense of the Alon-Boppana theorem \cite{Nil91}, and therefore
excellent combinatorial expanders. Such graphs were constructed in
\cite{LPS88} as quotients of the Bruhat-Tits tree associated with
$\mathrm{PSL}_{2}\left(\mathbb{Q}_{p}\right)$ by certain arithmetic
lattices. Analogue quotients of the Bruhat-Tits buildings associated
with $\mathrm{PSL}_{d}\left(\mathbb{F}_{q}\left(\left(t\right)\right)\right)$
are constructed in \cite{LSV05}, and termed \emph{Ramanujan Complexes}.
It is natural to ask whether these complexes are also optimal expanders
in the spectral and combinatorial senses.

\bibliographystyle{amsalpha}
\bibliography{cheeger}

\end{document}